\def\today{\number\day\space\ifcase\month\or   January\or February\or
   March\or April\or May\or June\or   July\or August\or September\or
   October\or November\or December\fi\   \number\year}
\newtheorem{lma}{Lemma}[section]
\newaliascnt{thmCt}{lma}
\newtheorem{thm}[thmCt]{Theorem}
\newaliascnt{corCt}{lma}
\newtheorem{cor}[corCt]{Corollary}
\newaliascnt{propCt}{lma}
\newtheorem{prop}[propCt]{Proposition}
\newtheorem*{thm*}{Theorem}
\newtheorem*{cor*}{Corollary}
\newtheorem*{prop*}{Proposition}
\theoremstyle{definition}
\newaliascnt{pgrCt}{lma}
\newaliascnt{dfCt}{lma}
\newtheorem{df}[dfCt]{Definition}
\newaliascnt{remCt}{lma}
\newtheorem{rem}[remCt]{Remark}
\newaliascnt{remsCt}{lma}
\newaliascnt{egCt}{lma}
\newtheorem{eg}[egCt]{Example}
\newaliascnt{qstCt}{lma}
\newtheorem{qst}[qstCt]{Question}
\newaliascnt{pbmCt}{lma}
\newaliascnt{notaCt}{lma}
\newcommand{\beq}{\begin{equation}}
\newcommand{\eeq}{\end{equation}}
\newcommand{\beqa}{\begin{eqnarray*}}
\newcommand{\eeqa}{\end{eqnarray*}}
\newcommand{\bal}{\begin{align*}}
\newcommand{\eal}{\end{align*}}
\newcommand{\bi}{\begin{itemize}}
\newcommand{\ei}{\end{itemize}}
\newcommand{\be}{\begin{enumerate}}
\newcommand{\ee}{\end{enumerate}}
\newcommand{\ep}{\varepsilon}
\newcommand{\Z}{{\mathbb{Z}}}
\newcommand{\R}{{\mathbb{R}}}
\newcommand{\C}{{\mathbb{C}}}
\newcommand{\N}{{\mathbb{N}}}
\newcommand{\Hi}{{\mathcal{H}}}
\newcommand{\B}{{\mathcal{B}}}
\newcommand{\U}{{\mathcal{U}}}
\newcommand{\id}{{\mathrm{id}}}
\newcommand{\ev}{{\mathrm{ev}}}
\newcommand{\spec}{{\mathrm{sp}}}
\newcommand{\spn}{{\mathrm{span}}}
\newcommand{\Aut}{{\mathrm{Aut}}}
\newcommand{\Max}{{\mathrm{Max}}}
\DeclareMathOperator{\Rep}{Rep}
\newcommand{\ca}{$C^*$-algebra}
\newcommand{\lcg}{locally compact group}
\newcommand{\I}{\infty}
\title{Representations of $p$-convolution algebras on $L^q$-spaces}
\date{\today}
\author[Eusebio Gardella]{Eusebio Gardella}
\address{Eusebio Gardella
Mathematisches Institut, Fachbereich Mathematik und Informatik der
Universit\"at M\"unster, Einsteinstrasse 62, 48149 M\"unster, Germany.}
\email{gardella@uni-muenster.de}
\urladdr{www.math.uni-muenster.de/u/gardella/}
\author{Hannes Thiel}
\address{Hannes Thiel
Mathematisches Institut, Fachbereich Mathematik und Informatik der
Universit\"at M\"unster, Einsteinstrasse 62, 48149 M\"unster, Germany.}
\email{hannes.thiel@uni-muenster.de}
\urladdr{www.math.uni-muenster.de/u/hannes.thiel/}
\thanks{The first named author was partially supported by the D.~K. Harrison Prize from the
University of Oregon and by a Postdoctoral Research Fellowship from the Humboldt Foundation.
The second named author was partially supported by the Deutsche Forschungsgemeinschaft (SFB 878).
Part of this work was completed while the authors were taking part in the
Research Program \emph{Classification of operator algebras, complexity,
rigidity and dynamics}, held at the Institut Mittag-Leffler, between
January and April of 2016. We would like to thank the staff and organizers, and
S{\o}ren Eilers in particular, for the hospitality and financial support.}
\subjclass[2010]{Primary:
47L10, %Algebras of operators on Banach spaces and other topological linear spaces
%22D20, %Representations of group algebras
43A15, %$L^p$-spaces and other function spaces on groups, semigroups, etc.
%43A07. %Means on groups, semigroups, etc.; amenable groups
Secondary:
%46J10, %Banach algebras of continuous functions, function algebras
43A65, %Representations of groups, semigroups, etc.
46E30. %Spaces of measurable functions ($L^p$-spaces, Orlicz spaces, Köthe function spaces, Lorentz spaces, rearrangement invariant spaces, ideal spaces, etc.)
}
\keywords{Locally compact group, algebra of $p$-pseudofunctions, algebra of $p$-pseudomeasures, contractive approximate identity, multiplier algebra, 
group amenability, crossed product}
\begin{document}

\begin{abstract}

For a nontrivial locally compact group $G$,
consider the Banach algebras of $p$-pseudofunctions, $p$-pseu\-do\-mea\-sures,
$p$-convolvers, and the full group $L^p$-operator algebra.
We show that these Banach algebras are operator algebras if and only if $p=2$.
More generally, we show that for $q\in [1,\I)$, these Banach algebras can be represented on an $L^q$-space
if and only if one of the following holds: (a) $p=2$ and $G$ is abelian; or (b) $\left|\frac 1p - \frac 12\right|=\left|\frac 1q - \frac 12\right|$.
This result can be interpreted as follows: for $p,q\in [1,\I)$, the $L^p$- and $L^q$-representation theories of a group are incomparable, except
in the trivial cases when they are equivalent.

As an application, we show that, for distinct $p,q\in [1,\I)$, if the $L^p$ and $L^q$ crossed products of a topological dynamical system are
isomorphic, then $\frac 1p + \frac 1q=1$. In order to prove this, we study the following relevant aspects of $L^p$-crossed products: existence of approximate
identities, duality with respect to $p$, and existence of canonical isometric maps from group algebras into their multiplier algebras.
\end{abstract} 
\maketitle
\tableofcontents

\section{Introduction}

We say that a Banach algebra is an operator algebra if it admits an isometric representation as bounded
operators on a Hilbert space.
Associated to any locally compact group $G$ there are three fundamentally important operator algebras:
its reduced group $C^*$-algebra $C^*_\lambda(G)$, its full group $C^*$-algebra $C^*(G)$, and its group
von Neumann algebra $L(G)$. These are, respectively, the Banach algebra generated by the left regular
representation of $G$ on $L^2(G)$; the universal \ca\ with respect to unitary representations of
$G$ on Hilbert spaces; and the weak$^*$ closure (also called ultraweak closure) of
$C^*_\lambda(G)$ in $\B(L^2(G))$. (We canonically identify $\B(L^2(G))$ with the dual of the projective
tensor product $L^2(G)\widehat{\otimes}L^2(G)$.)
Equivalently, $L(G)$ is the double commutant of $C^*_\lambda(G)$ in $\B(L^2(G))$.

These operator algebras admit generalizations to representations of $G$ on $L^p$-spaces, for $p\in [1,\I)$.
The analog
of $C^*_\lambda(G)$ is the algebra $F^p_\lambda(G)$ of $p$-pseu\-do\-func\-tions on $G$, introduced
by Herz in \cite{Her_SynSbgps} and originally denoted by $PF_p(G)$. The analog of $C^*(G)$ is the full group $L^p$-operator
algebra $F^p(G)$, defined by Phillips in \cite{Phi_CPLp}. Finally, the von Neumann algebra $L(G)$ has two
analogs, at least for $p\neq 1$: the algebra $PM_p(G)$ of $p$-pseudomeasures, which is the weak$^*$ closure of $F^p_\lambda(G)$ in
$\B(L^p(G))$ (where we canonically identify $\B(L^p(G))$ with the dual of the projective tensor product
$L^p(G)\widehat{\otimes}L^p(G)^*$);
and the algebra $CV_p(G)$ of $p$-convolvers, which is the double commutant of $F^p_\lambda(G)$
in $\B(L^p(G))$ (it is also the commutant of the right regular representation).
Both $PM_p(G)$ and $CV_p(G)$ were introduced in \cite{Her_SynSbgps}.

These objects, and related ones, have been studied by a number of authors in the last three decades.
For instance, see \cite{Cow_ApprProp}, \cite{NeuRun_column}, \cite{Run_QSLp}, \cite{DawSpr_ConvPMeas},
and the more recent papers \cite{Phi_CPLp}, \cite{Phi_MultDomain}, \cite{GarThi_GpsLp}, and
\cite{GarThi_GpsLpFunct}. In \cite{GarThi_Quot}, it is shown that there is a certain quotient of $F^p(\Z)$
which cannot be represented on an $L^p$-space (in fact, on any $L^q$-space for $q\in [1,\I)$), thus answering
a 20-year-old question of Le Merdy.

Despite the advances in the area, some basic questions remain open.
One important
open problem is whether $PM_p(G)=CV_p(G)$ for all $p\in (1,\I)$ and for all locally compact
groups $G$. This is known to be true when $p=2$ (both algebras agree with $L(G)$), essentially by the double commutant theorem.
It is immediate that $PM_p(G)\subseteq CV_p(G)$ in general, while Herz showed in \cite{Her_SynSbgps}
that equality holds for all $p$ if $G$ is amenable, a
result that was later generalized by Cowling in \cite{Cow_ApprProp} to groups with the approximation property.

A less studied problem is the following. By universality of $F^p(G)$,
there is a canonical contractive homomorphism $\kappa_p\colon F^p(G)\to F^p_\lambda(G)$ with dense range. For $p=2$, this
map is known to be a quotient map, and for $p=1$ it is an isomorphism regardless of $G$. On the other hand,
we do not know if $\kappa_p$ is also a quotient map for all other values of $p$. In fact,
we do not even know whether $\kappa_p$ is surjective. If this map is not necessarily surjective, can it be
injective without the group being amenable? (By Theorem~3.7 in~\cite{GarThi_GpsLp}, $G$ is amenable if and only
if $\kappa_p$ is bijective for some (equivalently, for all) $p\in (1,\I)$. This result was independently obtained by
Phillips in \cite{Phi_CPLp} and \cite{Phi_MultDomain}, using different methods.) In this case, it would be interesting
to describe precisely for what groups (and H\"older exponents) the map $\kappa_p$ is injective but not surjective.

Questions of the nature described above would in principle be easier to tackle if the objects considered had a
better understood structure, as is the case for operator algebras. Despite the fact
that the Banach algebras $F^p(G), F^p_\lambda(G), PM_p(G)$ and $CV_p(G)$ have natural representations as operators
on an $L^p$-space, this by itself does not rule out having isometric representations on Hilbert spaces as well;
see, for example, \cite{BleLeM_quotients}. It is therefore not a priori clear whether the
$L^p$-analogs of group operator algebras can be isometrically represented on Hilbert spaces.

In this paper, we settle this question negatively. Indeed, we show in \autoref{thm:FplambdaGLq} that for a nontrivial
locally compact group $G$, and for $p\in [1,\I)\setminus\{2\}$, none of the algebras $F^p(G), F^p_\lambda(G),
PM_p(G)$, or $CV_p(G)$ can be isometrically represented on a Hilbert space. This result generalizes Theorem~2.2 in~\cite{NeuRun_column},
where Neufang and Runde assume that $G$ is amenable and has
a closed infinite abelian subgroup. More generally, for $p,q\in [1,\I)$ we show that
the algebras $F^p_\lambda(G)$, $F^p(G)$, $PM_p(G)$, or $CV_p(G)$ can be isometrically represented on
an $L^q$-space if and only if one of the following holds:
\be\item $p=2$ and $G$ is abelian; or
\item $\left|\frac{1}{p}-\frac 12\right|=\left|\frac{1}{q}-\frac 12\right|$. (This is equivalent to either $p=q$ or $\frac{1}{p}+\frac 1q=1$.) \ee

This result can be interpreted as
asserting that the $L^p$- and $L^q$-representation theories of a nontrivial group are incomparable, whenever they are not ``obviously'' equivalent.
As a consequence, it follows that if there is an isometric Banach algebra isomorphism
$F^p_\lambda(G)\cong F^q_\lambda(G)$ (or between full group algebras, pseudomeasures
or convolvers) for distinct $p,q\in [1,\I)$, then $\frac{1}{p}+\frac 1q=1$; see \autoref{cor:FpGFqG}. The converse also holds; see \autoref{thm:duality}. 

As an application, we show that, for $p,q\in [1,\I)$, if the $L^p$- and $L^q$-crossed products of a topological dynamical system are
isometrically isomorphic, then $\left|\frac 1p - \frac 12\right|=\left|\frac 1q - \frac 12\right|$; see \autoref{cor:CrossProdsIsom}.
Since we do not know whether
an isomorphism $F^p(G,X,\alpha)\to F^q(G,X,\alpha)$ (or $F^p_\lambda(G,X,\alpha)\to F^q_\lambda(G,X,\alpha)$)
must necessarily respect the group action $\alpha$, \autoref{cor:FpGFqG} is
not enough to obtain the conclusion. This means that even if we are only interested in isomorphisms of crossed products, we are
forced to consider arbitrary representations of $p$-convolution algebras on $L^q$-spaces. In order to obtain these results,
we need to develop the theory of $L^p$-crossed products further, and we do so by exploring the following fundamental aspects: existence of approximate
identities, duality with respect to $p$, and existence of canonical isometric maps from group algebras into their multiplier algebras.

We give an outline of the proof of our main result (\autoref{thm:FplambdaGLq}).
Let $E$ be an $L^q$-space and let $\varphi\colon F^p_\lambda(G)\to \B(E)$ be
an isometric representation. (Similar arguments apply for the algebras $F^p(G)$, $PM_p(G)$, or $CV_p(G)$.)
\be
\item[Step 1.] The case $q=2$ is treated separately, since in this case one can show that $F^p_\lambda(G)$ is a \ca; see \autoref{thm:FplambdaG}.
So assume that $q\neq 2$.
\item[Step 2.] Using results in \cite{GarThi_ExtBid}, we may assume that $\varphi$ is non-degenerate.
\item[Step 3.] Functoriality properties of $F^p_\lambda$ with respect to subgroups (\cite{GarThi_GpsLpFunct})
allow us to further reduce the problem to the case where $G$ is a cyclic group (finite or infinite); see \autoref{lma:SubgpH}.
\item[Step 4.] For cyclic $G$, we need to know that $\varphi(F^p_\lambda(G))$ is isometrically isomorphic to $F^q_\lambda(G)$. This
requires non-trivial results from \cite{GarThi_BanAlg} on spectral configurations. (For instance, we use the fact that $F^q_\lambda(\Z)$
is the unique $L^q$-operator algebra generated by an invertible isometry whose Gelfand transform is not surjective.) We conclude
that there is an isometric isomorphism $F^p_\lambda(G)\cong F^q_\lambda(G)$.
\item[Step 5.] When $G=\Z$, we show that $\left|\frac{1}{p}-\frac 12\right|=\left|\frac{1}{q}-\frac 12\right|$ in \autoref{thm: FpZ not isom}
using elementary computations.
\item[Step 6.] When $G=\Z_n$, we use the existence, for $1\leq p \leq q\leq 2$, of a canonical, contractive map
$\gamma_{p,q}\colon F^p(G)\to F^q(G)$ with dense range (see \autoref{thm:gammapq}), to show that the Gelfand
transform of $F^p_\lambda(G)$ is an isometric isomorphism, thus reducing the problem to the case in Step~1 above.
\ee
%Our main steps to prove the above mentioned results are: first, use results from \cite{GarThi_ExtBid} to reduce the problem to the case
%when the representation is nondegenerate; second, reduce the problem further to the case $q=2$ using the existence,
%for $1\leq p \leq q\leq 2$, of a canonical, contractive map
%$\gamma_{p,q}\colon F^p(G)\to F^q(G)$ with dense range (Theorem~2.30 in~\cite{GarThi_GpsLp}); thirdly, use
%the fact that the $L^p$-operator group algebras $F^p_\lambda(G)$ and $F^p(G)$ have sufficiently nice functorial properties
%with respect to (not necessarily closed) subgroups (\cite{GarThi_GpsLpFunct}) to assume that $G$ is a cyclic group (finite
%or infinite). Finally, the case of a cyclic group is handled using the fact that the isometric isomorphism
%type of an $L^p$-operator
%algebra generated by an invertible isometry and its inverse can be completely described (\cite{GarThi_BanAlg}).

%The two main difficulties are dealing with nondegenerate representations, and establishing the result for $G=\Z_n$
%(this case is deliberately excluded in the work of Neufang and Runde \cite{NeuRun_column}). While the results in \cite{GarThi_ExtBid}
%allow us to assume that representations on $L^p$-spaces are nondegenerate, the use of spectral
%configurations in the sense of \cite{GarThi_BanAlg} is our main tool to deal with finite groups.

%For another result on nonrepresentability of certain $L^p$-operator algebras, namely the $L^p$ Cuntz algebras, on $L^q$-spaces,
%see Theorem~9.2 in~\cite{Phi_analogsCtz}.

We wish to point out that the use of spectral configurations can be avoided if one is only interested in the case
$\left|\frac{1}{p}-\frac 12\right|>\left|\frac{1}{q}-\frac 12\right|$. Indeed, this situation
can be entirely dealt with the maps $\gamma_{p,q}$ (see \autoref{prop:FpGReprLq} or \autoref{thm:FplambdaGQSLq} for a
similar argument, but in a different context). Also, the fact that we can assume representations to be non-degenerate
(Step 2) is by no means obvious, and the paper \cite{GarThi_ExtBid} grew out of our attempts to prove this.

\vspace{0.3cm}

\subsection{Notation}\label{subs:Not}
We take $\N=\{1,2,\ldots\}$. For $n\in \N$ and $p\in [1,\I)$,
we write $\ell^p_n$ in place of $\ell^p(\{1,\ldots,n\})$, and we write $\ell^p$ in place of $\ell^p(\Z)$.
For a Banach space $E$, we write $\B(E)$ for the Banach algebra of bounded linear operators on $E$.
For $p\in (1,\I)$, we denote by $p'$ its conjugate (H\"older) exponent, which is determined by the identity $\frac{1}{p}+\frac{1}{p'}=1$.
Consistently, for a Banach space $E$, we denote its dual space by $E'$, and for a linear map $\pi\colon E\to F$ between Banach spaces $E$ and
$F$, we denote by $\pi'\colon F'\to E'$ its transpose map.

Locally compact groups are assumed to be Hausdorff, and will always be implicitly endowed with a (fixed) left Haar measure, which will be chosen
to be the counting measure whenever the group is discrete. The left Haar measure of a locally compact group $G$ will be denoted by $\mu$, and we
will denote by $\nu$ the right Haar measure on $G$ determined by $\nu(U)=\mu(U^{-1})$ for all measurable sets $U\subseteq G$. The modular function 
of $G$ will be denoted by $\Delta\colon G\to \R_+$. We will repeatedly use the following identities:
\[\int_G f(t)\ d\mu(t)=\int_G\Delta(t^{-1})f(t^{-1})\ d\mu(t)=\int_G f(t^{-1})\ d\nu(t)=\int_G \Delta(t) f(t) \ d\nu(t),\]
valid for all $f\in L^1(G,\mu)$.

\section{Preliminaries}

In this section, we recall and collect the necessary definitions and theorems that will be used throughout the paper.
The only thing in this section that is really new is \autoref{prop:defagree}.

We begin by defining the main objects of study of this work.

\begin{df}\label{df:FpG}
Let $G$ be a locally compact group, and let $p\in [1,\I)$. Denote by $\Rep_p(G)$ the class
of all contractive representations of $L^1(G)$ on $L^p$-spaces. The \emph{full group $L^p$-operator
algebra of $G$}, denoted $F^p(G)$, is the completion of $L^1(G)$ in the norm given by
\[\|f\|_{F^p(G)}=\sup\left\{\|\pi(f)\|\colon \pi\in \Rep_p(G)\right\}\]
for $f\in L^1(G)$.

Denote by $\lambda_p\colon L^1(G)\to \B(L^p(G))$ the left regular representation, which is given by
$\lambda_p(f)\xi=f\ast\xi$
for $f\in L^1(G)$ and $\xi\in L^p(G)$. The \emph{algebra of $p$-pseudofunctions on $G$} (sometimes also called
\emph{reduced group $L^p$-operator algebra of $G$}), here denoted $F^p_\lambda(G)$, is the completion
of $L^1(G)$ in the norm
\[\|f\|_{F^p_\lambda(G)}=\|\lambda_p(f)\|_{\B(L^p(G))}\]
for $f\in L^1(G)$.
\end{df}

The algebra $F^p(G)$ has been defined in \cite{Phi_CPLp} and \cite{GarThi_GpsLp} as the completion
of $L^1(G)$ with respect to \emph{non-degenerate}, contractive representations on $L^p$-spaces. In the
proposition below, we show that this distinction is irrelevant. We recall the following, which is a
particular case of a result from \cite{GarThi_ExtBid}.

\begin{thm}\label{thm:ProjEssSubsp} (\cite{GarThi_ExtBid}).
Let $A$ be a Banach algebra with a left contractive approximate identity,
let $E$ be a reflexive Banach space, and let $\varphi\colon A\to \B(E)$ be a contractive homomorphism.
Denote by $E_0$ the essential subspace of $\varphi$, this is, $E_0=\overline{\spn} \ \varphi(A)E$.
Then there exists a contractive idempotent $e\in\B(E)$ satisfying $e(E)=E_0$. \end{thm}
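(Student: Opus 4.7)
The plan is to produce $e$ as a weak operator topology (WOT) cluster point of the net $(\varphi(a_i))$, where $(a_i)$ is a left contractive approximate identity in $A$, and then show that this limit automatically has the three desired properties: contractivity, idempotence, and range $E_0$.

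First I would set up the compactness argument. Since $E$ is reflexive, for each $\xi\in E$ the orbit $\{T\xi : \|T\|\le 1\}$ is contained in the closed ball of radius $\|\xi\|$ in $E$, which is weakly compact. By Tychonoff, the closed unit ball of $\B(E)$ is therefore compact in the weak operator topology. Since $(\varphi(a_i))$ lies in this ball (as $\varphi$ is contractive and $\|a_i\|\le 1$), it has a WOT-convergent subnet; call its limit $e\in\B(E)$, which automatically satisfies $\|e\|\le 1$.

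Next I would verify that $e$ acts as the identity on the essential subspace $E_0$. For any $b\in A$, the left approximate identity property gives $a_i b\to b$ in norm, hence $\varphi(a_i)\varphi(b)=\varphi(a_i b)\to\varphi(b)$ in norm. Since also $\varphi(a_i)\varphi(b)\to e\varphi(b)$ in WOT, uniqueness of limits forces $e\varphi(b)=\varphi(b)$. Thus $e$ fixes every element of $\varphi(A)E$, and by linearity and continuity $e$ fixes every element of $E_0=\overline{\spn}\,\varphi(A)E$. This shows $E_0\subseteq e(E)$.

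For the reverse inclusion and the idempotence, I would use that bounded linear maps are automatically weak-to-weak continuous. Given $\xi\in E$, the vector $e\xi$ is the weak limit of $\varphi(a_i)\xi\in\varphi(A)E\subseteq E_0$; since $E_0$ is a norm-closed convex subspace, it is weakly closed, so $e\xi\in E_0$. This gives $e(E)\subseteq E_0$, hence $e(E)=E_0$. Finally, applying the bounded (and hence weakly continuous) operator $e$ to the weak limit defining $e\xi$ yields
\[
e(e\xi)\;=\;\text{weak-}\lim_i e\varphi(a_i)\xi\;=\;\text{weak-}\lim_i\varphi(a_i)\xi\;=\;e\xi,
\]
where the middle equality uses $e\varphi(a_i)=\varphi(a_i)$ from the previous step. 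Thus $e^2=e$.

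The only genuinely delicate point is the WOT-compactness of the unit ball of $\B(E)$, which is exactly where reflexivity of $E$ enters; without it, one would have no reason to extract a limit of $(\varphi(a_i))$. Everything else is formal manipulation once this limit is available, and the approximate identity condition does all the work to make sure the limit is a projection onto the right subspace.
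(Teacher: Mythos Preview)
The paper does not actually prove this theorem: it is quoted as ``a particular case of a result from \cite{GarThi_ExtBid}'' and stated without proof, so there is no in-paper argument to compare against. Your proof is correct and is essentially the standard argument one would expect: extract a WOT-cluster point of $(\varphi(a_i))$ using reflexivity (which gives weak compactness of balls in $E$, hence WOT-compactness of the unit ball of $\B(E)$), then check that this cluster point fixes $\varphi(A)E$ pointwise and has range in the weakly closed subspace $E_0$. Two minor remarks: once you have established that $e$ restricts to the identity on $E_0$ and that $e(E)\subseteq E_0$, idempotence is immediate and your final displayed computation is unnecessary; and you should keep the subnet notation consistent throughout (you pass to a subnet in the first paragraph but then revert to writing $(a_i)$), though of course every subnet of a convergent net converges to the same limit, so the logic is unaffected.
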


\begin{prop}\label{prop:defagree}
Let $p\in [1,\I)$ and let $G$ be a locally compact group. If $E$ is an $L^p$-space and
$\pi\colon L^1(G)\to \B(E)$ is a contractive representation, then there exist an $L^p$-space $F$
and a contractive, non-degenerate representation $\varphi\colon L^1(G)\to \B(F)$ such that
\[\|\pi(f)\|\leq \|\varphi(f)\|\]
for all $f\in L^1(G)$. In particular, \autoref{df:FpG} agrees with the definitions given in
\cite{Phi_CPLp} and \cite{GarThi_GpsLp}.
\end{prop}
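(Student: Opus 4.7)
I would split the argument into the reflexive case $p \in (1,\I)$ and the non-reflexive case $p=1$, which need to be treated differently.

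For $p \in (1,\I)$, the space $E$ is reflexive, so I would apply \autoref{thm:ProjEssSubsp} to $A = L^1(G)$ (which has a two-sided contractive approximate identity) and to $\pi$ itself: this yields a contractive idempotent $e \in \B(E)$ whose range is the essential subspace $E_0 = \overline{\spn} \ \pi(L^1(G))E$. To conclude that this subspace is an $L^p$-space I would invoke the classical result of Ando and Tzafriri that every contractively complemented subspace of an $L^p$-space is itself (isometric to) an $L^p$-space (trivially true for $p=2$). I would then set $F := E_0$ and define $\varphi\colon L^1(G) \to \B(F)$ by $\varphi(f) = \pi(f)|_{E_0}$; this is well-defined since $\pi(f)(E) \subseteq E_0$ by the very definition of $E_0$.

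The main technical point is to verify non-degeneracy of $\varphi$ and the norm bound $\|\pi(f)\| \le \|\varphi(f)\|$. Let $(e_i)_i$ be a contractive approximate identity for $L^1(G)$. For $\xi$ of the form $\pi(g)\eta$ one has $\pi(e_i)\xi = \pi(e_i g)\eta \to \pi(g)\eta = \xi$ because $e_i g \to g$ in $L^1(G)$; a standard $\varepsilon/3$-argument using $\|\pi(e_i)\| \le 1$ then extends this to all $\xi \in E_0$, so $\varphi$ is non-degenerate. For the norm inequality, note that for every $\xi \in E$ we have $\pi(e_i)\xi \in E_0$ with $\|\pi(e_i)\xi\| \le \|\xi\|$, and
\[
\pi(f)\xi \;=\; \lim_i \pi(f e_i)\xi \;=\; \lim_i \varphi(f)\bigl(\pi(e_i)\xi\bigr),
\]
using $\|\pi(f e_i) - \pi(f)\| \le \|f e_i - f\|_1 \to 0$ and the fact that $\pi(e_i)\xi$ lies in $E_0$ (where $\pi(f)$ agrees with $\varphi(f)$). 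Taking norms gives $\|\pi(f)\xi\| \le \|\varphi(f)\|\,\|\xi\|$, hence $\|\pi(f)\| \le \|\varphi(f)\|$ (in fact equality holds).

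The case $p=1$ requires a separate treatment since $L^1$-spaces are not reflexive and \autoref{thm:ProjEssSubsp} does not apply. Here I would simply take $F = L^1(G)$ and $\varphi = \lambda_1$: a quick CAI calculation shows $\|\lambda_1(f)\| = \|f\|_1$, and $\lambda_1$ is non-degenerate because $L^1(G)$ has a CAI; since any contractive $\pi$ automatically satisfies $\|\pi(f)\| \le \|f\|_1 = \|\lambda_1(f)\|$, the conclusion is immediate. The final sentence of the proposition then follows formally: the supremum in \autoref{df:FpG} equals the supremum over non-degenerate contractive representations used in \cite{Phi_CPLp} and \cite{GarThi_GpsLp}, since each contractive representation is dominated elementwise by a non-degenerate one. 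The main obstacle is really the recognition of $E_0$ as an $L^p$-space, which relies on the non-trivial Ando--Tzafriri result; everything else is a fairly routine CAI argument combined with the idempotent produced by \autoref{thm:ProjEssSubsp}.
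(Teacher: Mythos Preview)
Your proposal is correct and follows essentially the same approach as the paper: split into $p=1$ (use $\lambda_1$ and the CAI) and $p>1$ (use \autoref{thm:ProjEssSubsp} to project onto the essential subspace, then Tzafriri's theorem to recognise it as an $L^p$-space). You give more detail than the paper on the verification of $\|\pi(f)\|\le\|\varphi(f)\|$ via the CAI argument---the paper simply asserts $\|\pi(f)\|=\|\varphi(f)\|$---but the strategy is identical.
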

\begin{proof}
We treat the case $p=1$ first. In this case, we may take $\varphi$ to be the left regular
representation $\lambda_1$. Indeed, let $\pi\colon L^1(G)\to \B(E)$ be a contractive representation
on an $L^1$-space, and let $(a_j)_{j\in J}$ be a contractive
approximate identity for $L^1(G)$. Then
\[\|\pi(f)\|\leq \|f\|_1=\lim_{j\in J} \|f\ast a_j\|_1=\lim_{j\in J}\|\lambda_1(f)a_j\|_1\leq \|\lambda_1(f)\|,\]
for all $f\in L^1(G)$, as desired.

Assume now that $p>1$. Let $\pi\colon L^1(G)\to \B(E)$ be a contractive representation
on an $L^p$-space. Since $E$ is reflexive and $L^1(G)$ has a contractive approximate identity, it follows
from \autoref{thm:ProjEssSubsp} that there exists a contractive projection $e\in\B(E)$ such that $e(E)$ is the essential subspace $E_0$ of $\pi$.
Then $E_0$ is an $L^p$-space by Theorem~6 in~\cite{Tzafriri}. Let $\varphi\colon L^1(G)\to \B(E_0)$ be the restriction
of $\pi$. Then $\varphi$ is a non-degenerate, contractive representation, and it is clear that $\|\pi(f)\|=\|\varphi(f)\|$
for all $f\in L^1(G)$.
\end{proof}

The following duality principle was established in Proposition~2.18 of~\cite{GarThi_GpsLp}.

\begin{prop}\label{prop:dualityFullRed}
Let $G$ be a \lcg, let $p\in (1,\infty)$, and let $p'$ be its conjugate exponent.
Then the inversion map on $G$ extends to a canonical isometric isomorphism
$F^p(G)\cong F^{p'}(G)$.
\end{prop}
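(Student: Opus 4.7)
The plan is to prove this by combining two anti-multiplicative operations whose composition becomes multiplicative: the inversion map on $G$, and Banach-space transposition of operators. More precisely, I would define $J : L^1(G) \to L^1(G)$ by $J(f)(t) = \Delta(t^{-1}) f(t^{-1})$. Using the integral identities recalled in the Notation subsection, it is straightforward to check that $J$ is an isometric, involutive anti-homomorphism of $L^1(G)$: the $L^1$-isometry is immediate from the substitution identity $\int f\, d\mu = \int \Delta(t^{-1}) f(t^{-1})\, d\mu(t)$; the involution property $J^2 = \id$ follows from the multiplicativity of $\Delta$; and the anti-multiplicativity $J(f*g) = J(g)*J(f)$ follows by a substitution $u = s^{-1}t$ inside the convolution combined with the cocycle identity $\Delta(s^{-1}t) = \Delta(s^{-1})\Delta(t)$.

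Next, for every contractive representation $\pi \colon L^1(G) \to \B(E)$ of $L^1(G)$ on an $L^p$-space $E$, I would construct a contractive representation on the dual $L^{p'}$-space $E'$ by the formula $\widetilde\pi(f) := \pi(J(f))'$, where the prime denotes the Banach-space transpose. Since both $J$ and transposition are anti-multiplicative, the composition $\widetilde\pi$ is genuinely multiplicative; since both are norm-preserving, $\|\widetilde\pi(f)\| = \|\pi(J(f))\|$, so $\widetilde\pi$ is contractive as a representation of $L^1(G)$. Note that $E'$ is an $L^{p'}$-space because $p\in(1,\infty)$ (and hence $E$ is reflexive, with dual isometrically an $L^{p'}$-space). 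Taking the supremum of $\|\pi(J(f))\|$ over all contractive representations $\pi$ on $L^p$-spaces, this yields $\|J(f)\|_{F^p(G)} \leq \|f\|_{F^{p'}(G)}$; running the same construction in the opposite direction and using $J^2 = \id$ gives the reverse inequality, so
\[ \|J(f)\|_{F^p(G)} = \|f\|_{F^{p'}(G)} \qquad \text{for all } f \in L^1(G). \]
By density of $L^1(G)$, the map $J$ extends to the asserted canonical isometric (anti-)isomorphism $F^p(G) \cong F^{p'}(G)$ induced by $g \mapsto g^{-1}$ on group elements.

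The main obstacle I expect is the careful bookkeeping of the modular function $\Delta$ in the non-unimodular case: both the $L^1$-isometry of $J$ and its anti-multiplicativity rely on combining inversion $t \mapsto t^{-1}$ with the correct $\Delta$-weight, and parallel care is needed when identifying the Banach dual of an abstract $L^p$-space with an $L^{p'}$-space so that $\widetilde\pi$ genuinely takes values in an $L^{p'}$-operator algebra. Everything else in the argument — multiplicativity from composing two anti-multiplicative maps, the isometry of transposition, and the density extension — is essentially formal.
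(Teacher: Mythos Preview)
Your proposal is correct and follows essentially the same approach as the paper's referenced proof (Proposition~2.18 of \cite{GarThi_GpsLp}, summarized in the proof of \autoref{prop:FpGReprLq} here): your map $J$ is precisely the paper's $\sharp$, and the duality between $\Rep_p(G)$ and $\Rep_{p'}(G)$ via ``compose with $\sharp$ and transpose'' is exactly the mechanism used. Your flag that the resulting map is an \emph{anti}-isomorphism is accurate; the paper uses the word ``isomorphism'' somewhat loosely here (compare the proof of \autoref{thm:duality}, where the analogous maps are again anti-isomorphisms), and for every application in the paper the distinction is immaterial.
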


The next result is a combination of Theorem~2.30 and Corollary~3.20 in~\cite{GarThi_GpsLp}. The
case $2\leq q\leq p<\infty$ is obtained using duality, so we omit it.

\begin{thm}\label{thm:gammapq}
Let $G$ be a \lcg.
If $1\leq p \leq q\leq 2$, then the identity map on $L^1(G)$ extends to a contractive homomorphism
\[
\gamma_{p,q}\colon F^p(G)\to F^q(G)
\]
with dense range. In particular, $\|f\|_{F^q(G)}\leq \|f\|_{F^p(G)}$ for every $f\in L^1(G)$.

If, moreover, $G$ is amenable, then:
\begin{enumerate}
\item[(a)] $\gamma_{p,q}$ is injective.
\item[(b)] $\gamma_{p,q}$ is surjective if and only if $G$ is finite.
\end{enumerate}
\end{thm}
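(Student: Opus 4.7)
The plan is to first establish the inequality $\|f\|_{F^q(G)} \le \|f\|_{F^p(G)}$ for every $f \in L^1(G)$; once this is known, the existence of a contractive homomorphism $\gamma_{p,q}\colon F^p(G)\to F^q(G)$ extending the identity on $L^1(G)$ is automatic, and the dense range is immediate from density of $L^1(G)$ in $F^q(G)$. The content reduces to showing that every contractive representation $\pi\colon L^1(G)\to\B(E)$ on an $L^q$-space $E$ is dominated on each $f\in L^1(G)$ by some contractive $L^p$-representation. The natural tool here is the Bretagnolle--Dacunha-Castelle--Krivine theorem: for $1\le p\le q\le 2$, every $L^q$-space admits a canonical isometric embedding into an $L^p$-space, realized through $q$-stable random variables. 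The main obstacle is that such embeddings are typically not complemented, so $\pi$ cannot be extended to the ambient $L^p$-space by a naive projection argument; one has to argue instead that the $F^p(G)$-norm remains unchanged when the class of admissible representation spaces is enlarged from $L^p$-spaces to all subspaces of $L^p$-spaces (which includes all $L^q$-spaces in the relevant range by BDCK).

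For the amenability statements, assume $G$ is amenable, so that $F^p(G)=F^p_\lambda(G)$ and $F^q(G)=F^q_\lambda(G)$ by the theorem recalled in the introduction. For injectivity (a), let $a\in F^p_\lambda(G)$ satisfy $\gamma_{p,q}(a)=0$, and pick $f_n\in L^1(G)$ with $\lambda_p(f_n)\to a$ in $F^p_\lambda(G)$. Then $\lambda_q(f_n)=\gamma_{p,q}(\lambda_p(f_n))\to \gamma_{p,q}(a)=0$ in $F^q_\lambda(G)$. For any $\xi\in L^p(G)\cap L^q(G)$ (for instance, continuous with compact support, which is dense in $L^p(G)$), the sequence $f_n\ast\xi$ converges to $a\xi$ in $L^p(G)$ and to $0$ in $L^q(G)$. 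Passing to a common subsequence with almost-everywhere convergence, we deduce $a\xi=0$; density of such $\xi$ then forces $a=0$.

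For (b), if $G$ is finite, then $L^1(G)$ is finite-dimensional and coincides with both $F^p(G)$ and $F^q(G)$, so $\gamma_{p,q}$ is trivially a bijection. Conversely, if $\gamma_{p,q}$ is surjective, then by (a) and the open mapping theorem it is a Banach algebra isomorphism, so the norms $\|\cdot\|_{F^p_\lambda(G)}$ and $\|\cdot\|_{F^q_\lambda(G)}$ are equivalent on $L^1(G)$. To derive a contradiction when $G$ is infinite, I would invoke functoriality of $F^p_\lambda$ with respect to closed subgroups from \cite{GarThi_GpsLpFunct} to reduce to the case $G=\Z$, and then exhibit an explicit sequence in $\ell^1(\Z)$ — such as Rudin--Shapiro or Fej\'er-type polynomials — whose $F^p_\lambda(\Z)$- and $F^q_\lambda(\Z)$-norms grow at different rates, contradicting norm equivalence. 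Classical sharp estimates for $L^p$-multipliers on $\T$, corresponding to $F^p_\lambda(\Z)$ via the Gelfand transform, should provide the necessary norm separation.
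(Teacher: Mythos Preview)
This theorem is not proved in the present paper; it appears in the preliminaries with the attribution ``a combination of Theorem~2.30 and Corollary~3.20 in~\cite{GarThi_GpsLp}''. So there is no in-paper argument to compare against, and I evaluate your sketch on its own merits.

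Your approach to the main assertion is the right one and matches the strategy underlying~\cite{GarThi_GpsLp}: once one knows that the universal norm over $L^p$-spaces agrees with the universal norm over \emph{subspaces} of $L^p$-spaces (this is part of Theorem~3.7 there, and the present paper uses the $QSL^p$-version in the proof of \autoref{thm:FplambdaGQSLq}), the Bretagnolle--Dacunha-Castelle--Krivine embedding of $L^q$ into $L^p$ for $1\le p\le q\le 2$ gives $\|f\|_{F^q(G)}\le\|f\|_{F^p(G)}$ immediately. You correctly isolate this equality of norms as the crux, though you do not indicate how to prove it; it is not a triviality.

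Your argument for~(a) is correct.

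Your argument for~(b) has a genuine gap. The proposed reduction to $G=\Z$ via functoriality for \emph{closed} subgroups does not go through in general: an infinite amenable locally compact group need not contain $\Z$ as a closed subgroup. Already among abelian examples, the circle $\T$, any infinite profinite group, and the discrete torsion group $\bigoplus_{n\in\N}\Z/2\Z$ are infinite and amenable but contain no closed copy of $\Z$ (in $\T$ every infinite cyclic subgroup is dense; the other two are torsion). For such $G$ your Rudin--Shapiro/Fej\'er strategy on $\Z$ is simply unavailable. A complete argument must also cover the compact case and the discrete torsion case; one route is to pass instead to finite (cyclic or abelian) subgroups of arbitrarily large order when they exist, and to exploit norm computations of the sort in \autoref{eg: FpZn}, or to argue directly with convolvers supported near the identity in the non-discrete case. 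Note also that even when a cyclic subgroup $H\cong\Z$ exists but is not closed, the functoriality you cite lands $F^p_\lambda(H)$ in $M^p_\lambda(G)$ rather than in $F^p_\lambda(G)$ (see \autoref{prop:Funct} and \autoref{lma:SubgpH}), so transferring norm equivalence from $G$ down to $H$ requires an extra step you have not supplied.
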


\begin{rem} \label{rem:p2}
When $p=2$, we have $F^p(G)=C^*(G)$, the full group $C^*$-algebra, and $F^p_\lambda(G)=C^*_\lambda(G)$,
the reduced group $C^*$-algebra. In particular, when $G$ is abelian, then $\gamma_{p,2}\colon F^p(G)\to C^*(G)\cong C_0(\widehat{G})$
is the Gelfand transform.
\end{rem}

It follows from
universality of the norm of $F^p(G)$ that the identity map on $L^1(G)$ extends to a contractive homomorphism
$\kappa_p\colon F^p(G)\to F^p_\lambda(G)$ with dense range.
The following is part of Theorem~3.7 in \cite{GarThi_GpsLp}. This result has been independently obtained by
Phillips, and will appear in \cite{Phi_MultDomain}.

\begin{thm}\label{thm:Amen}
Let $G$ be a locally compact group, and let $p\in(1,\infty)$.
The following are equivalent:
\begin{enumerate}
\item
The group $G$ is amenable.
\item
The canonical map $\kappa_p\colon F^p(G)\to F^p_\lambda(G)$ is an (isometric) isomorphism.
\end{enumerate}
\end{thm}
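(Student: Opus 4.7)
The plan is to prove the two implications separately. The direction (2) $\implies$ (1) is a short consequence of the augmentation character $\tau\colon L^1(G) \to \C$ defined by $\tau(f) = \int_G f\,d\mu$, which is a contractive representation on the one-dimensional $L^p$-space $\C$ and hence belongs to $\Rep_p(G)$. This gives $\|f\|_{F^p(G)} \geq |\tau(f)|$ for all $f\in L^1(G)$, so in particular $\|f\|_{F^p(G)} = \|f\|_1$ for every nonnegative $f\in L^1(G)$. If $\kappa_p$ is isometric, we conclude $\|\lambda_p(f)\|_{\B(L^p(G))} = \|f\|_1$ for every such $f$, which is the well-known $L^p$-analog of the Kesten--Reiter characterization of amenability.

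For (1) $\implies$ (2), let $\pi\colon L^1(G) \to \B(E)$ be any contractive representation on an $L^p$-space $E$; it suffices to show $\|\pi(f)\| \leq \|\lambda_p(f)\|$ for every $f\in L^1(G)$. By \autoref{prop:defagree}, we may assume $\pi$ is non-degenerate, in which case a standard argument shows that the associated strongly continuous group representation $s\mapsto \pi(s)$ consists of invertible isometries of $E$. Viewing $L^p(G,E)$ as an $L^p$-space, we implement an $L^p$-version of Fell's absorption principle: define $\tilde\pi(s) \in \B(L^p(G,E))$ by $(\tilde\pi(s)\xi)(t) = \pi(s)\xi(s^{-1}t)$, and define $U\in\B(L^p(G,E))$ by $(U\xi)(t) = \pi(t)\xi(t)$. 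Because each $\pi(t)$ is an isometry, $U$ is an isometric isomorphism, and a direct computation shows $U^{-1}\tilde\pi(s)U = \lambda_p(s) \otimes 1_E$ for all $s\in G$. Integrating against $f\in L^1(G)$ yields
\[
\|\tilde\pi(f)\|_{\B(L^p(G,E))} = \|\lambda_p(f)\|_{\B(L^p(G))}.
\]

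To conclude, we use that amenability of $G$ provides the $L^p$ form of Reiter's property: for every compact $K\subseteq G$ and $\ep>0$, there exists $\xi\in L^p(G)$ with $\xi\geq 0$, $\|\xi\|_p = 1$, and $\|\lambda_p(s)\xi - \xi\|_p < \ep$ for all $s\in K$. Fix $f\in C_c(G)$ with $\supp(f)\subseteq K$ and $\eta\in E$ with $\|\eta\|=1$. A direct application of Minkowski's integral inequality, together with the isometry of each $\pi(s)$, gives
\[
\|\tilde\pi(f)(\xi\otimes\eta) - \xi\otimes\pi(f)\eta\|_{L^p(G,E)} \leq \int_G |f(s)|\,\|\lambda_p(s)\xi - \xi\|_p\,d\mu(s) \leq \ep \|f\|_1.
\]
Since $\|\xi\otimes\pi(f)\eta\| = \|\pi(f)\eta\|$, letting $\ep\to 0$ yields $\|\pi(f)\eta\| \leq \|\tilde\pi(f)\| = \|\lambda_p(f)\|$, and density of $C_c(G)$ in $L^1(G)$ extends the inequality to all of $L^1(G)$. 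The main obstacle lies in verifying that the non-degeneracy reduction of \autoref{prop:defagree} genuinely produces a group representation by isometries—rather than mere contractions—and that $L^p(G,E)$ is itself an $L^p$-space on which the intertwiner $U$ acts isometrically; once these technicalities are settled, the rest of the argument mirrors the classical Hilbert space proof of Hulanicki.
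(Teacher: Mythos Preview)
Your argument is sound. Note first that the present paper does not actually prove \autoref{thm:Amen}: it is quoted as part of Theorem~3.7 in \cite{GarThi_GpsLp} (with an independent proof announced in \cite{Phi_MultDomain}), so there is no in-paper proof to compare against directly.

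That said, your strategy is a clean $L^p$-adaptation of the classical Hulanicki argument, and the details check out. For $(1)\Rightarrow(2)$, the $L^p$ Fell absorption is correct: the intertwiner $U$ is an invertible isometry of $L^p(G,E)$ precisely because each $\pi(t)$ is, and this follows from the multiplier extension (as in \autoref{thm:multipliers}) sending each point mass $\delta_s$ to a contraction with contractive inverse. The Reiter-$P_p$ estimate via Minkowski's integral inequality then closes the argument as you wrote. For $(2)\Rightarrow(1)$, the augmentation character forces $\|\lambda_p(f)\|=\|f\|_1$ for $f\geq 0$; this does imply amenability, though it is worth remarking that the quickest route back to the classical Kesten--Hulanicki criterion is interpolation: if $G$ were non-amenable one would have $\|\lambda_2(f)\|<1$ for some probability density $f$, and Riesz--Thorin between $L^1$ and $L^2$ (or $L^2$ and $L^\infty$) then gives $\|\lambda_p(f)\|<1$, a contradiction.

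The organization in \cite{GarThi_GpsLp} is somewhat different---there the result sits inside a broader statement showing that, for amenable $G$, the norms coming from representations on $L^p$-, $SL^p$-, and $QSL^p$-spaces all coincide---but the analytic core (Reiter's condition and an absorption/intertwining argument) is the same as yours. Your proof has the virtue of being direct and not requiring the $QSL^p$ machinery.
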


In view of the above result, we will identify $F^p(G)$ and $F^p_\lambda(G)$ in a canonical manner whenever $G$ is
amenable.

\section{\texorpdfstring{$F^p(\Z)$}{Fp(Z)} is not isomorphic to \texorpdfstring{$F^q(\Z)$}{Fq(Z)}}

The main step in the proof of \autoref{thm:FplambdaGLq} is to use the functoriality properties studied in \cite{GarThi_GpsLpFunct} to reduce
the statement to the case where $G$ is a cyclic group (finite or infinite). While the case of a finite cyclic group will be dealt with
using spectral configurations introduced in~\cite{GarThi_BanAlg}, the case of the infinite cyclic group has to be dealt with separately,
and we do so in this section. Our goal here is to prove the following: for $p,q\in [1,\I)$,
there is an isometric isomorphism $F^p(\Z)\cong F^q(\Z)$ as Banach algebras if and only if
$\left|\frac 1p-\frac 12\right|=\left|\frac 1q-\frac 12\right|$ (which is equivalent to $p$ and $q$ being
either equal or H\"older conjugate); see \autoref{thm: FpZ not isom}. In order to prove this, we will
reduce to the problem of ruling out the possibility of a \emph{canonical} isomorphism for the cyclic group $\Z_2$.
In this case, the norm of a certain element can be explicitly computed, yielding the result; see \autoref{prop: FpZ2 not isom}.

It should be pointed out that the methods of this section do not seem to generalize to other finite cyclic groups besides $\Z_2$.
We will postpone dealing with such groups until the next section; see \autoref{thm:FplambdaGLq}, and specifically the proof of
Claim 2 in it. The approach we take here for the case of $\Z_2$ has the advantage of being completely elementary and it does not
depend on results from other works (in particular, we do not need here anything about spectral configurations from \cite{GarThi_BanAlg}).

We begin by looking at the group $L^p$-operator algebra of a finite cyclic group, which by amenability coincides with the algebra
of $p$-pseudofunctions (this is the presentation we actually use). 

\begin{eg} \label{eg: FpZn}
Let $n\in \N$ and let $p\in [1,\I)$. Then $L^p$-group algebra $F^p(\Z_n)$ is the Banach
subalgebra of $\B(\ell^p_n)$ generated by the cyclic shift $s_n$ of order $n$
\[s_n= \left( \begin{array}{ccccc}
0 &  &  &  & 1 \\
1 & 0 & &  &  \\
& \ddots & \ddots &  &  \\
&  & \ddots & 0 &  \\
&  &  & 1 & 0 \end{array} \right).\]
(The algebra $\B(\ell^p_n)$ is just $M_n$ with the $L^p$-operator norm.) It is easy to check that $F^p(\Z_n)$ is algebraically isomorphic to $\C^n$,
but the canonical embedding $F^p(\Z_n)\to M_n$ is not as diagonal matrices.
In fact, computing the norm of an element in $F^p(\Z_n)$ is challenging for $p$ other than
from 1 and 2, essentially because computing $p$-norms of matrices that are not diagonal is difficult.
Indeed, let
$\omega_n=e^{\frac{2\pi i}{n}}$, and set
\[
u_n= \frac{1}{\sqrt{n}} \left( \begin{array}{ccccc}
1 & 1 & 1 & \cdots & 1 \\
1 &\omega_n &\omega_n^2 & \cdots &\omega_n^{n-1} \\
1 &\omega_n^2 &\omega_n^4 & \cdots &\omega_n^{2(n-1)} \\
\vdots & \vdots & \vdots & \ddots & \vdots \\
1 &\omega_n^{n-1} &\omega_n^{2(n-1)} & \cdots &\omega_n^{(n-1)^2} \end{array} \right).
\]

If $\xi=(\xi_0,\ldots,\xi_{n-1})\in \C^n\cong F^p(\Z_n)$, then its norm in $F^p(\Z_n)$ is
\[
\|\xi\|_{F^p(\Z_n)}=\left\| u_n \left( \begin{array}{ccccc}
\xi_0 &  &  & \\
 & \xi_1 & & \\
 &  & \ddots & \\
&  &  &  \xi_{n-1}  \end{array} \right)u_n^{-1}\right\|_{\B(\ell^p_n)}.
\]

The matrix $u_n$ is unitary (its inverse is the transpose of its conjugate), and hence $\|\xi\|_{F^2(\Z_n)}=\|\xi\|_\infty$. 
Moreover, if $1\leq p\leq q\leq 2$,
then $\|\cdot\|_{F^q(\Z_n)}\leq \|\cdot\|_{F^{p}(\Z_n)}$ by \autoref{thm:gammapq}.
In particular, the norm $\|\cdot\|_{F^p(\Z_n)}$ always dominates the norm
$\|\cdot\|_\I$.\end{eg}

The automorphism group of $F^p(\Z_n)$ is not easy to describe when $p\neq 2$, since not every permutation of the
coordinates of $\C^n$ is isometric. The next proposition asserts that the cyclic shift on $\C^n$ is isometric.

\begin{prop} \label{prop: shift invariance norm on FpZn}
Let $n\in \N$ and let $p\in [1,\I)$. Denote by $\tau\colon\C^n \to \C^n$ the cyclic forward shift, this is,
\[
\tau(\xi_0,\ldots,\xi_{n-1})=(\xi_{n-1},\xi_0,\ldots,\xi_{n-2})
\]
for all $(\xi_0,\ldots,\xi_{n-1})\in \C^n$. Then $\tau$ induces an isometric isomorphism $F^p(\Z_n)\to F^p(\Z_n)$.
\end{prop}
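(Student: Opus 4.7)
The plan is to recognize $\tau$, via the Fourier transform underlying the identification $F^p(\Z_n)\cong\C^n$ in \autoref{eg: FpZn}, as multiplication by a character on $L^1(\Z_n)$, and then to observe that multiplication by a modulus-$1$ character is always isometric on $F^p_\lambda$. Since $\Z_n$ is amenable, \autoref{thm:Amen} gives $F^p(\Z_n)=F^p_\lambda(\Z_n)$, so I may freely work with the left regular representation throughout.

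First I would unpack the isomorphism in \autoref{eg: FpZn}. Since $s_n$ is diagonalized by $u_n$ with eigenvalues $\omega_n^{-k}$ for $k=0,\ldots,n-1$, for $f=\sum_k c_k\delta_k\in L^1(\Z_n)$ one has $\lambda_p(f)=u_n\,\mathrm{diag}(\hat f)\,u_n^{-1}$ with $\hat f(j)=\sum_k c_k\omega_n^{-jk}$. So the isomorphism of \autoref{eg: FpZn} is the Fourier transform $f\mapsto\hat f$. Applying the cyclic shift $\tau$ gives
\[
\tau(\hat f)(j)=\hat f(j-1)=\sum_k c_k\,\omega_n^{k}\,\omega_n^{-jk},
\]
which is the Fourier transform of $k\mapsto\omega_n^{k}c_k$. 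Setting $\chi\in\widehat{\Z_n}$ to be the character $k\mapsto\omega_n^{k}$, this says that $\tau$ is conjugate, via the Fourier transform, to pointwise multiplication by $\chi$ on $L^1(\Z_n)$.

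Next I would verify that for any \lcg\ $G$ and any continuous character $\chi\colon G\to\T$, pointwise multiplication $f\mapsto\chi f$ extends to an isometric algebra automorphism of $F^p_\lambda(G)$. A short calculation using $\chi(ab)=\chi(a)\chi(b)$ yields the intertwining identity
\[
\lambda_p(\chi f)=M_\chi\,\lambda_p(f)\,M_\chi^{-1},
\]
where $M_\chi\in\B(L^p(G))$ is pointwise multiplication by $\chi$. Since $|\chi|=1$, the operator $M_\chi$ is a surjective isometry of $L^p(G)$, so conjugation by $M_\chi$ is an isometric automorphism of $\B(L^p(G))$. Hence $\|\chi f\|_{F^p_\lambda(G)}=\|f\|_{F^p_\lambda(G)}$ for every $f\in L^1(G)$. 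Combined with the previous paragraph, this shows that $\tau$ is isometric on $F^p(\Z_n)$; multiplicativity is automatic, since $\tau$ just permutes the coordinates of $\C^n$.

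The argument is not hard: the only pitfalls are keeping the sign conventions of the Fourier transform consistent with the specific form of $u_n$, and recalling the standard Fourier-duality principle that translation on $\widehat{G}$ corresponds to multiplication by a character on $G$. Once these are in place, the remaining content is the routine intertwining calculation above and the triviality that an invertible surjective isometry conjugates $\B(L^p(G))$ isometrically.
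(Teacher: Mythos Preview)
Your argument is correct and, at its core, coincides with the paper's: both come down to the identity $\lambda_p(\tau(\xi))=d(\omega)\,\lambda_p(\xi)\,d(\overline{\omega})$ (in your language, $\lambda_p(\chi f)=M_\chi\lambda_p(f)M_\chi^{-1}$ with $M_\chi=d(\omega)$), and then use that $d(\omega)$ is an isometry of $\ell^p_n$. The only difference is packaging: the paper verifies this identity by a direct matrix computation ($d(\tau(\xi))=s\,d(\xi)\,s^{-1}$ together with $us=d(\omega)u$), whereas you recognize it as the general principle that pointwise multiplication by a unimodular character on $L^1(G)$ is intertwined by $M_\chi$ and hence isometric on $F^p_\lambda(G)$ for any locally compact $G$. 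Your formulation is a bit more conceptual and yields a statement valid beyond $\Z_n$, but the underlying mechanism is identical.
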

\begin{proof}
We follow the notation from \autoref{eg: FpZn}, except that we write $u$
in place of $u_n$, and we write $s$ in place of $s_n$. 

For $\xi\in \C^n$, let $d(\xi)$ denote the diagonal $n\times n$ matrix with $d(\xi)_{j,j}=\xi_j$ for $j=0,\ldots, n-1$.
Denote by $\rho\colon\C^n\to M_n$ the unital homomorphism given by $\rho(\xi)= ud(\xi)u^{-1}$ for $\xi\in\C^n$.
Then
\[
\|\xi\|_{F^p(\Z_n)} =\|\rho(\xi)\|_{\B(\ell^p_n)} = \|ud(\xi)u^{-1}\|_{\B(\ell^p_n)}
\]
for all $\xi\in \C^n$.\\
\indent Set $\omega=(1,\omega_n^1,\ldots,\omega_n^{n-1})\in\C^n$, and denote by $\overline{\omega}$ its (coordinate-wise) conjugate.
Given $\xi\in\C^n$, one checks that
\[
\mathrm{d}(\tau(\xi)) = s d(\xi) s^{-1}, \ \  us = d(\omega)u, \ \ \text{ and }\ \ s^{-1}u^{-1} = u^{-1} d(\overline{\omega}).
\]
It follows that
\begin{align*}
\|\tau(\xi)\|_{F^p(\Z_n)} &=\|u \mathrm{d}(\tau(\xi))u^{-1}\|_{\B(\ell^p_n)} \\
&=\|u s d(\xi) s^{-1} u^{-1}\|_{\B(\ell^p_n)} \\
&=\|d(\omega) u d(\xi) u^{-1} d(\overline{\omega})\|_{\B(\ell^p_n)}.
\end{align*}
Since $d(\omega)$ and $d(\overline{\omega})$ are isometries of $\ell^p_n$, we conclude that
\[
\|\tau(\xi)\|_{F^p(\Z_n)}
=\|d(\omega) u \tau(\xi) u^{-1} d(\overline{\omega})\|_{\B(\ell^p_n)}
=\|u d(\xi) u^{-1}\|_{\B(\ell^p_n)}
=\|\xi\|_{F^p(\Z_n)},
\]
as desired. \end{proof}

The fact that $F^p(\Z_2)$ is isometrically isomorphic to $F^q(\Z_2)$ only in the ``obvious'' cases can be proved directly
by computing the norm of a special element, as we show below.

\begin{prop}\label{prop: FpZ2 not isom}
Let $p, q\in [1,\infty)$. Then $F^p(\Z_2)$ is isometrically isomorphic, as Banach algebras, to $F^{q}(\Z_2)$ if and only if
$\left|\frac{1}{p}-\frac 12\right|=\left|\frac{1}{q}-\frac 12\right|$.
\end{prop}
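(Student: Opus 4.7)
My plan proceeds in two parts. The ``if'' direction is short: when $p=q$ there is nothing to prove, and when $p\neq q$ the equality $\left|\tfrac{1}{p}-\tfrac{1}{2}\right|=\left|\tfrac{1}{q}-\tfrac{1}{2}\right|$ forces $\tfrac{1}{p}+\tfrac{1}{q}=1$; since the inversion map on $\Z_2$ is the identity, \autoref{prop:dualityFullRed} immediately produces an isometric isomorphism $F^p(\Z_2)\cong F^{p'}(\Z_2)=F^q(\Z_2)$.

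For the ``only if'' direction, I first reduce the existence of an isometric algebra isomorphism to a pointwise equality of norms on $\C^2$. Suppose $\varphi\colon F^p(\Z_2)\to F^q(\Z_2)$ is an isometric isomorphism of Banach algebras. Via the identifications from \autoref{eg: FpZn}, both sides are the unital commutative algebra $\C^2$, and the only unital algebra automorphisms of $\C^2$ are the identity and the flip $\tau(\xi_0,\xi_1)=(\xi_1,\xi_0)$. By \autoref{prop: shift invariance norm on FpZn} applied with $n=2$ (for which the cyclic forward shift is precisely $\tau$), the flip is isometric on $F^r(\Z_2)$ for every $r\in[1,\I)$; post-composing $\varphi$ with $\tau$ if necessary, I may therefore assume $\varphi$ is the identity on $\C^2$. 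The task becomes: if $\|\xi\|_{F^p(\Z_2)}=\|\xi\|_{F^q(\Z_2)}$ for every $\xi\in\C^2$, then $\left|\tfrac{1}{p}-\tfrac{1}{2}\right|=\left|\tfrac{1}{q}-\tfrac{1}{2}\right|$.

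The heart of the argument will be the explicit formula $\|(1,i)\|_{F^p(\Z_2)}=2^{|1/p-1/2|}$ for all $p\in[1,\I)$, from which the conclusion follows by equating values at $p$ and $q$. By \autoref{eg: FpZn}, this norm equals $\|M\|_{\B(\ell^p_2)}$, where $M=u_2\,d(1,i)\,u_2^{-1}=\tfrac{1}{2}\bigl(\begin{smallmatrix}1+i&1-i\\1-i&1+i\end{smallmatrix}\bigr)$, and a direct expansion yields
\[
\|M(x,y)\|_p^p = 2^{-p/2}\Bigl[\bigl(|x|^2+|y|^2-2\,\im(x\bar y)\bigr)^{p/2}+\bigl(|x|^2+|y|^2+2\,\im(x\bar y)\bigr)^{p/2}\Bigr].
\]
Parameterizing by $r=|x|$, $s=|y|$, and $\alpha=\im(x\bar y)\in[-rs,rs]$ subject to $r^p+s^p=1$, the main obstacle is the bifurcation in $\alpha$: writing $C=|x|^2+|y|^2$, the map $\alpha\mapsto(C-2\alpha)^{p/2}+(C+2\alpha)^{p/2}$ has second derivative of the sign of $p/2-1$, so it is concave for $p<2$ (hence maximized at $\alpha=0$) and convex for $p>2$ (hence maximized at $|\alpha|=rs$). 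Once this dichotomy is in hand, the remaining constrained maximization in $(r,s)$ on the unit $\ell^p$-sphere is routine: for $p\leq 2$ the maximum is attained at $(x,y)=(1,0)$ and gives $\|M\|_p=2^{1/p-1/2}$; for $p\geq 2$ it is attained at $x=2^{-1/p}$, $y=i\cdot 2^{-1/p}$ and gives $\|M\|_p=2^{1/2-1/p}$. The two cases combine into the claimed formula, finishing the proof.
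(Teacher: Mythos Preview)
Your argument is correct and follows the paper's overall structure: the ``if'' direction via \autoref{prop:dualityFullRed}, the reduction to $\varphi=\id$ using that $\Aut(\C^2)=\{\id,\tau\}$ together with \autoref{prop: shift invariance norm on FpZn}, and the decisive computation $\|(1,i)\|_{F^p(\Z_2)}=2^{|1/p-1/2|}$. The only substantive difference is how you establish this last formula.

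The paper computes $\delta(t)=\|M\|_{\B(\ell^t_2)}$ for $t\in[1,2]$ only (using duality to cover $t>2$), and obtains the upper bound $\delta(t)\leq 2^{1/t-1/2}$ by Riesz--Thorin interpolation between the easily computed endpoint values $\delta(1)=\sqrt{2}$ and $\delta(2)=1$; the lower bound is the same test vector $(1,0)$ that you use. Your approach instead expands $\|M(x,y)\|_p^p$ explicitly, optimizes first in the phase variable $\alpha=\im(x\bar y)$ via concavity/convexity of $\alpha\mapsto (C-2\alpha)^{p/2}+(C+2\alpha)^{p/2}$, and then in the moduli $(r,s)$. For $p\le 2$ the residual problem is the elementary fact that $r^2+s^2\le 1$ on $\{r^p+s^p=1\}$ (convexity of $u\mapsto u^{2/p}$), and for $p\ge 2$ it is Clarkson's inequality $(r+s)^p+|r-s|^p\le 2^{p-1}(r^p+s^p)$. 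Both steps are indeed routine, so your sketch is complete.

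Each approach has its merits. The paper's interpolation argument is shorter and avoids any case split, but it relies on the coincidence (flagged in the paper) that the Riesz--Thorin bound happens to be sharp here, which is not explained by the method. Your direct optimization is more labor-intensive, but it is entirely self-contained, it makes transparent \emph{why} the bound is attained (the extremizers $(1,0)$ for $p\le 2$ and $(2^{-1/p},\,i\,2^{-1/p})$ for $p\ge 2$ fall out of the analysis), and it treats $p>2$ without invoking duality. If you keep your version, it would be worth naming Clarkson's inequality explicitly rather than leaving the $p\ge 2$ maximization as ``routine''.
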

\begin{proof}
The ``if'' implication follows from \autoref{prop:dualityFullRed}.
We proceed to show the ``only if'' implication.
Given $t\in [1,\infty)$, we claim that
\[\|(1,i)\|_{F^t(\Z_2)}=2^{\left|\frac{1}{t}-\frac{1}{2}\right|}.\]

By \autoref{prop:dualityFullRed}, the quantity on the left-hand side remains unchanged if one replaces $t$
with its conjugate exponent. Since the same holds for the quantity on the right-hand side, it follows that it is
enough to prove the claim for $t\in [1,2]$.\\
\indent  Define a continuous function $\delta\colon[1,2]\to \mathbb{R}$ by $\delta(t)=\|(1,i)\|_{F^t(\Z_2)}$ for $t\in [1,2]$.
Let $a$ be the matrix
\[
a=\frac{1}{2} \begin{pmatrix}
1+i & 1-i \\
1-i & 1+i
\end{pmatrix}.
\]
Then $\delta(t)= \left\| a \right\|_{\B(\ell^t_2)}$ for all $t\in [1,2]$.
The values of $\delta$ at $t=1$ and $t=2$ are easy to compute, and we have $\delta(1)=2^{\frac{1}{2}}$ and $\delta(2)=1$.
Fix $t\in (1,2)$ and let $\theta\in (0,1)$ satisfy
\[\frac{1}{t}=\frac{1-\theta}{1} + \frac{\theta}{2}.\]
Using the Riesz-Thorin Interpolation Theorem in $[1,2]$, we conclude that
\[
\delta(t) \leq\delta(1)^{1-\theta}\cdot\delta(2)^{\theta}=2^{\frac{1}{2}(\frac{2}{t}-1)}\cdot 1= 2^{\frac{1}{t}-\frac{1}{2}}.\]

For the converse inequality, fix $t\in [1,2]$ and consider the vector 
$\xi=\left(\begin{smallmatrix}
1 \\ 
0 
\end{smallmatrix}\right) \in \ell^t_2$. Then $\|\xi\|_t=1$ and
$ax = \frac{1}{2}(\begin{smallmatrix}
1+i \\ 
1-i
\end{smallmatrix})$.
We have
\[
\left\|\frac{1}{2}\begin{pmatrix}
1+i \\ 1-i
\end{pmatrix}\right\|_t
= \frac{1}{2} ( |1+i|^t + |1-i|^t)^{\frac{1}{t}}
= 2^{(\frac{1}{t}-\frac{1}{2})}.
\]
We conclude that
\[
\delta(t)
=\left\|a\right\|_{\B(\ell^t_2)}
\geq \frac{\left\|ax \right\|_t}{\|\xi\|_t}
= 2^{(\frac{1}{t}-\frac{1}{2})}.
\]
This shows that $\delta(t)=2^{(\frac{1}{t}-\frac{1}{2})}$ for all $t\in[1,2]$, and the claim is proved.

Now let $p,q\in [1,\infty)$ and let $\psi\colon F^p(\Z_2)\to F^{q}(\Z_2)$ be an isometric isomorphism.
Since $\psi$ is an algebra isomorphism, we must have either $\psi(x,y)=(x,y)$ or $\psi(x,y)=(y,x)$ for all $(x,y)\in\C^2$.
By \autoref{prop: shift invariance norm on FpZn}, the flip $(x,y)\mapsto (y,x)$ is an isometric isomorphism of
$F^{q}(\Z_2)$, so we may assume that $\psi$ is the identity map on $\C^2$. It follows that $\|(1,i)\|_{F^p(\Z_2)}
= \|(1,i)\|_{F^{q}(\Z_2)}$, so $\left|\frac{1}{p}-\frac{1}{2}\right| = \left|\frac{1}{q}-\frac{1}{2}\right|$ and the proof is complete.
\end{proof}

\begin{rem}
Adopt the notation from the proof above. Then the function $\delta$ attains the upper bound given by the Riesz-Thorin
Interpolation Theorem, which is a rare situation. This fortunate coincidence makes the argument possible, but it is not
clear to us how to generalize these computations to other cyclic groups, or even to $\Z_3$. However, knowing the result
just for $\Z_2$ is enough to prove \autoref{thm: FpZ not isom}.
\end{rem}

The following is probably standard, but we have not been able to find a reference in the literature. Accordingly,
we prove it here.

\begin{prop}\label{prop:homeoS1}
Let $f\colon S^1\to S^1$ be a homeomorphism. Then there exists $\zeta\in S^1$ such that
$f(-\zeta)=-f(\zeta)$.
\end{prop}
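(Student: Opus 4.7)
The plan is to convert this into a Borsuk–Ulam-type statement in dimension one and then reduce it to the intermediate value theorem by passing to the universal cover of $S^1$. First, I would use the covering map $\exp\colon \R\to S^1$, $\exp(t)=e^{it}$, together with the standard path-lifting lemma to obtain a continuous function $\varphi\colon \R\to\R$ such that $f(e^{it})=e^{i\varphi(t)}$ for every $t\in\R$. Since $f$ is a homeomorphism, it induces an automorphism of $\pi_1(S^1)\cong\Z$, so its degree $n$ equals $\pm 1$; equivalently, $\varphi(t+2\pi)=\varphi(t)+2\pi n$ for all $t$. This quasiperiodicity is the only property of $\varphi$ I will use.

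Next, I would reformulate the desired identity. Writing $\zeta=e^{it}$, the conclusion $f(-\zeta)=-f(\zeta)$ becomes
\[
e^{i\varphi(t+\pi)}=e^{i(\varphi(t)+\pi)},
\]
i.e., the continuous function $\psi(t):=\varphi(t+\pi)-\varphi(t)$ must take some value that is an odd integer multiple of $\pi$. A direct computation gives
\[
\psi(t+\pi)=\varphi(t+2\pi)-\varphi(t+\pi)=2\pi n-\psi(t),
\]
so the shifted function $\bar\psi(t):=\psi(t)-\pi n$ is antiperiodic: $\bar\psi(t+\pi)=-\bar\psi(t)$.

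Finally, applying the intermediate value theorem to $\bar\psi$ on $[0,\pi]$, the antiperiodicity relation $\bar\psi(\pi)=-\bar\psi(0)$ guarantees a zero $t_0\in[0,\pi]$. Then $\psi(t_0)=\pi n=\pm\pi$, which is an odd multiple of $\pi$, so $\zeta=e^{it_0}$ satisfies $f(-\zeta)=-f(\zeta)$, as required.

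The only mild subtlety is the use of the fact that a self-homeomorphism of $S^1$ must have degree $\pm 1$; this is essential, since for an even-degree map the identity $\psi(t_0)=\pi n$ would give an even multiple of $\pi$, and the argument would collapse. Everything else is elementary covering-space theory and the intermediate value theorem, so I do not anticipate any real obstacle.
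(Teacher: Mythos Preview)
Your proof is correct and follows essentially the same strategy as the paper's: lift $f$ to a real-valued function and apply the Intermediate Value Theorem to the difference $\varphi(t+\pi)-\varphi(t)$. The paper normalizes more concretely (writing $h\colon[0,2\pi]\to[0,2\pi]$ with $h(0)=0$ and treating the orientation-reversing case separately), whereas you use covering-space language and the degree-$\pm 1$ observation to handle both orientations at once; but the core idea is identical.
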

\begin{proof}
Regard $f$ as a homeomorphism $h\colon [0,2\pi]\to [0,2\pi]$ with either $h(0)=0$ (if $f$ is orientation preserving)
or $h(0)=2\pi$ (if $f$ is orientation reversing). Without loss of generality, assume that $h(0)=0$, and hence
that $h$ is strictly increasing with $h(2\pi)=2\pi$. Let $g\colon [0,\pi]\to\R$ be given by
\[g(t)=\frac{h(t+\pi)-h(t)}{\pi}\]
for $t\in [0,\pi]$. We need to find $s\in [0,\pi]$ with $g(s)=1$. If $h(\pi)=\pi$, then $g(\pi)=1$
and we are done. If $h(\pi)>\pi$, we
have $g(0)>1$, while $g(\pi)<1$. Likewise, if $h(\pi)<\pi$, then $g(0)<1$ and $g(\pi)>1$.
In either case, the conclusion then follows from the Mean Value Theorem.
\end{proof}

We are now ready to prove that for $p,q\in [1,\infty)$, the algebras $F^p(\Z)$
and $F^{q}(\Z)$ are (abstractly) isometrically isomorphic only in the trivial case when
$\left|\frac{1}{p}-\frac{1}{2}\right|=\left|\frac{1}{q}-\frac{1}{2}\right|$.
This result should be compared with part~(2) of Corollary~3.20 
in~\cite{GarThi_GpsLp}, here reproduced as \autoref{thm:gammapq},
where only the canonical homomorphism is considered. 
The strategy will be to use \autoref{prop:homeoS1}
to reduce to the case when the group is $\Z_2$, which is \autoref{prop: FpZ2 not isom}.
The fact that the spectrum of $F^p(\Z)$ is the circle is crucial in
our proof, and we do not know how to generalize these methods to directly deal with, for example, $\Z^2$.

\begin{thm}\label{thm: FpZ not isom}
Let $p,q\in [1,\infty)$.
Then $F^p(\Z)$ and $F^q(\Z)$ are isometrically isomorphic (as Banach algebras)
if and only if $\left|\frac 1p-\frac 12\right|=\left|\frac 1q-\frac 12\right|$.
\end{thm}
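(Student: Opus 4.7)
The plan for the ``if'' direction is to apply \autoref{prop:dualityFullRed} directly. For the ``only if'' direction, suppose $\psi\colon F^p(\Z)\to F^q(\Z)$ is an isometric isomorphism. Since $\Z$ is amenable and abelian, \autoref{thm:Amen} and \autoref{rem:p2} ensure that $F^p(\Z)$ and $F^q(\Z)$ coincide with their reduced counterparts and are commutative semisimple Banach algebras; their Gelfand spectra are both naturally homeomorphic to $S^1$ via $\zeta\leftrightarrow\chi_\zeta$, where $\chi_\zeta(u)=\zeta$ and $u$ denotes the canonical generating isometry. Consequently, $\psi$ induces a homeomorphism $f\colon S^1\to S^1$ characterized by $\chi_\zeta\circ\psi=\chi_{f(\zeta)}$.

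The key step will be to apply \autoref{prop:homeoS1} to $f$, producing $\zeta\in S^1$ with $f(-\zeta)=-f(\zeta)$; set $\eta:=f(\zeta)$. The strategy from here is to extract from $\psi$ an isometric isomorphism between $F^p(\Z_2)$ and $F^q(\Z_2)$ and then invoke \autoref{prop: FpZ2 not isom}. To implement this, define the closed codimension-$2$ ideals $I^p_\eta:=\ker\chi_\eta\cap\ker\chi_{-\eta}\subseteq F^p(\Z)$ and $I^q_\zeta:=\ker\chi_\zeta\cap\ker\chi_{-\zeta}\subseteq F^q(\Z)$. Because $f$ sends $\{\zeta,-\zeta\}$ onto $\{\eta,-\eta\}$, the map $\psi$ carries $I^p_\eta$ onto $I^q_\zeta$ and descends to an isometric isomorphism of the corresponding quotients.

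To identify these quotients with $F^p(\Z_2)$ and $F^q(\Z_2)$, I plan to use rotation automorphisms: for each $\omega\in S^1$, the assignment $u\mapsto\omega u$ extends to an isometric automorphism $\alpha_\omega$ of $F^p(\Z)$---this may be verified by observing that twisting a contractive $L^p$-representation of $L^1(\Z)$ by the character $n\mapsto\omega^n$ yields another such representation---and $\alpha_\omega$ implements multiplication by $\omega$ on the Gelfand spectrum. In particular, $\alpha_\eta$ maps $I^p_\eta$ isometrically onto $I^p_1:=\ker\chi_1\cap\ker\chi_{-1}$, which is the kernel of the canonical surjection $F^p(\Z)\to F^p(\Z_2)$ induced by the group quotient $\Z\to\Z_2$; an analogous statement holds on the $F^q$ side. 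Composing will then yield an isometric isomorphism $F^p(\Z)/I^p_1\cong F^q(\Z)/I^q_1$.

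The main obstacle I anticipate is verifying that the canonical surjection $F^p(\Z)\to F^p(\Z_2)$ is a \emph{metric} quotient, so that $F^p(\Z)/I^p_1\cong F^p(\Z_2)$ isometrically (and similarly for $q$). Contractivity is immediate from the universal property of $F^p(\Z)$, but establishing that every element of $F^p(\Z_2)$ admits lifts in $F^p(\Z)$ of nearly matching norm is a functoriality result for $F^p$ with respect to group quotients that may require a separate argument or a citation to a suitable result in \cite{GarThi_GpsLpFunct}. Once this is in place, the resulting isometric isomorphism $F^p(\Z_2)\cong F^q(\Z_2)$ reduces the theorem to \autoref{prop: FpZ2 not isom}.
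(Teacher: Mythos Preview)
Your proposal is correct and follows essentially the same strategy as the paper's proof: identify the Gelfand spectrum with $S^1$, use \autoref{prop:homeoS1} to find an antipodal pair preserved by the induced homeomorphism, twist by rotation automorphisms so that this pair becomes $\{1,-1\}$, and then pass to the quotient $F^p(\Z_2)$ to invoke \autoref{prop: FpZ2 not isom}. The paper organizes the same ingredients slightly differently---it first composes the rotations with $\varphi$ to obtain a single isometric isomorphism $\psi=\omega_{f(\zeta_0)}\circ\varphi\circ\omega_{\zeta_0}^{-1}$ that intertwines the canonical quotient maps $\pi_p$ and $\pi_q$---but this is only a cosmetic difference from your ideal-theoretic formulation. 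The one gap you flag (that $F^p(\Z)\to F^p(\Z_2)$ is a \emph{metric} quotient) is exactly what the paper also needs and handles by citing Theorem~2.5 in~\cite{GarThi_GpsLpFunct}, so your instinct to look there is correct.
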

\begin{proof}
The ``if'' implication follows from \autoref{prop:dualityFullRed}. Let us show the converse.

By Proposition~3.13 in \cite{GarThi_GpsLp}, the maximal ideal spaces of $F^p(\Z)$ and $F^q(\Z)$ are canonically homeomorphic to $S^1$.
We let $\Gamma_p\colon F^p(\Z)\to C(S^1)$ denote the Gelfand transform, which maps the canonical
generating invertible isometry $u\in F^p(\Z)$, associated to $1\in \Z$, to the canonical inclusion $\iota\colon S^1\to \C$.

Let $\varphi\colon F^p(\Z)\to F^q(\Z)$ be an isometric isomorphism.
Then $\varphi$ induces a homeomorphism $f\colon S^1\to S^1$ that maps $z\in S^1$ to the unique point $f(z)\in S^1$
satisfying $\ev_z\circ\varphi = \ev_{f(z)} \colon F^p(\Z)\to\C$. 

Use \autoref{prop:homeoS1} to choose $\zeta_0\in S^1$ such that $f(-\zeta_0)=-f(\zeta_0)$.
Denote by $\pi_p\colon F^p(\Z)\to F^p(\Z_2)$ and $\pi_q\colon F^p(\Z)\to F^q(\Z_2)$ the canonical homomorphisms associated with
the surjective map $\Z\to \Z_2$, which are quotient maps by Theorem~2.5 in~\cite{GarThi_GpsLpFunct}.
Let $\omega_{\zeta_0}\colon F^p(\Z)\to F^p(\Z)$ be the isometric isomorphism induced by multiplying the canonical
generator $u\in F^p(\Z)$ by $\zeta_0$. Analogously, let $\omega_{f(\zeta_0)}\colon F^q(\Z)\to F^q(\Z)$ be the
isometric isomorphism induced by multiplying $u\in F^q(\Z)$ by $f(\zeta_0)$.
Then the following diagram commutes:

\begin{center}
\makebox{
\xymatrix{ & C(S^1) \ar[r]^-{f^*} & C(S^1) & \\
F^p(\Z) \ar[d]_{\pi_p}  & F^p(\Z)\ar[l]_-{\omega_{\zeta_0}} \ar[r]_-{\varphi} \ar[u]^-{\Gamma_p}
& F^q(\Z) \ar[u]_-{\Gamma_q} \ar[r]^-{\omega_{f(\zeta_0)}} & F^q(\Z) \ar[d]^{\pi_q} \\
F^p(\Z_2)\ar@{-->}_-{\widehat{\psi}}[rrr] & & &  F^q(\Z_2).
}}
\end{center}

Define an isometric isomorphism $\psi\colon F^p(\Z)\to F^q(\Z)$ by $\psi=\omega_{f(\zeta_0)}\circ\varphi\circ\omega_{\zeta_0}^{-1}$. 
One checks that $\psi(\ker(\pi_p))=\ker(\pi_q)$.
It follows that $\psi$ induces an isometric isomorphism $\widehat{\psi}\colon F^p(\Z_2)\to F^q(\Z_2)$.
By \autoref{prop: FpZ2 not isom}, this implies that $\left|\frac 1p-\frac 12\right|=\left|\frac 1q-\frac 12\right|$, as desired.
\end{proof}

\section{Nonrepresentability on \texorpdfstring{$L^q$}{Lq}-spaces}

The goal of this section is to show that for a nontrivial locally compact group $G$ and for $p, q\in [1,\infty)$ distinct, the Banach algebras
$F^p(G)$, $F^p_\lambda(G)$, $PM_p(G)$ and $CV_p(G)$ can be represented on an $L^q$-space only in the trivial cases, namely if
either $p=2$ and $G$ is abelian; or if $p$ and $q$ are H\"older conjugate; see \autoref{thm:FplambdaGLq}.
The main step in the proof is to use the functoriality properties of these objects, studied in \cite{GarThi_GpsLpFunct}, to reduce
the statement to the case where $G$ is a cyclic group (finite or infinite). The case of a finite cyclic group will be dealt with
using spectral configurations as in \cite{GarThi_BanAlg}, as well as the canonical maps $\gamma_{p,q}\colon F^p(G)\to F^q(G)$
constructed in Theorem~3.7 in~\cite{GarThi_GpsLp}.

\autoref{prop:FpGReprLq} below is our first preparatory result on representability of full group $L^p$-operator
algebras on $L^q$-spaces. We need some notation first. Let $G$ be a \lcg, and denote by $\Delta\colon G\to \R$
its modular function.
For $f\in L^1(G)$, let $f^\sharp\colon G\to\C$ be given by $f^\sharp(s) = \Delta(s^{-1}) f(s^{-1})$ for all
$s\in G$. It is easy to check that the map $\sharp\colon L^1(G)\to L^1(G)$ is an anti-multiplicative isometric linear map
of order two.

\begin{prop}
\label{prop:FpGReprLq}
Let $G$ be a locally compact group, and let $p,q\in [1,\I)$ satisfy $\left|\frac{1}{p}-\frac 12\right|>\left|\frac{1}{q}-\frac 12\right|$.
Suppose that $F^p(G)$ is isometrically representable on an $L^q$-space.
\be
\item If $p,q\in [1,2]$ or $p,q\in [2,\I)$,
then the identity map on $L^1(G)$ extends to an isometric isomorphism $F^p(G)\cong F^q(G)$.
\item If $p \in [1,2]$ and $q\in [2,\I)$, or if $q \in [1,2]$ and $p\in [2,\I)$, then the map
$\sharp$ on $L^1(G)$ induces, when composed with the transpose map $\B(L^p(G))\to \B(L^{p'}(G))$,
an isometric isomorphism $F^p(G)\cong F^q(G)$.
\ee
\end{prop}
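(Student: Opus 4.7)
The plan is to leverage the universal property of $F^q(G)$ together with the monotonicity of $L^r$-operator algebra norms given by \autoref{thm:gammapq} and the duality isomorphism \autoref{prop:dualityFullRed}. Fix an isometric representation $\varphi\colon F^p(G) \to \B(E)$ with $E$ an $L^q$-space. The starting observation, common to both cases, is that composing the canonical inclusion $L^1(G) \hookrightarrow F^p(G)$ with $\varphi$ yields a contractive representation of $L^1(G)$ on an $L^q$-space, so the very definition of $\|\cdot\|_{F^q(G)}$ forces
\[
\|f\|_{F^p(G)} = \|\varphi(f)\| \leq \|f\|_{F^q(G)}
\qquad (\star)
\]
for every $f\in L^1(G)$.

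For statement~(1), I will assume $p,q\in[1,2]$ (the case $p,q\in[2,\infty)$ follows by the same argument after dualizing via \autoref{prop:dualityFullRed}). The hypothesis $\left|\tfrac{1}{p}-\tfrac12\right|>\left|\tfrac{1}{q}-\tfrac12\right|$ forces $p\leq q$, and then \autoref{thm:gammapq} directly gives the reverse inequality $\|f\|_{F^q(G)}\leq\|f\|_{F^p(G)}$. Combining this with $(\star)$ shows that the identity on $L^1(G)$ extends to an isometric isomorphism $F^p(G)\cong F^q(G)$.

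For statement~(2), which I expect to be the main obstacle, suppose $p\in[1,2]$ and $q\in[2,\infty)$ (the symmetric case is analogous). The hypothesis translates to $q\leq p'$. Assuming $p>1$ so that $p'<\infty$, the version of \autoref{thm:gammapq} for exponents in $[2,\infty)$ gives $\|f\|_{F^q(G)}\leq\|f\|_{F^{p'}(G)}$, and \autoref{prop:dualityFullRed} rewrites this as $\|f\|_{F^{p'}(G)} = \|f^\sharp\|_{F^p(G)}$. Chaining with $(\star)$ yields
\[
\|f\|_{F^p(G)}\leq \|f\|_{F^q(G)}\leq \|f\|_{F^{p'}(G)}=\|f^\sharp\|_{F^p(G)}
\]
for all $f\in L^1(G)$. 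The key trick is that this chain also holds with $f^\sharp$ in place of $f$; since $f^{\sharp\sharp}=f$, applying the chain to $f^\sharp$ gives the reverse inequality $\|f^\sharp\|_{F^p(G)}\leq \|f\|_{F^p(G)}$. All four quantities must therefore coincide, so in particular the identity on $L^1(G)$ extends to an isometric isomorphism $F^{p'}(G)\cong F^q(G)$. Composing this identification with the duality isomorphism $F^p(G)\cong F^{p'}(G)$ of \autoref{prop:dualityFullRed}, which by construction is realized by $\sharp$ on $L^1(G)$ together with the transpose map $\B(L^p(G))\to\B(L^{p'}(G))$, produces the isomorphism $F^p(G)\cong F^q(G)$ described in the statement. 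The edge case $p=1$, where $p'=\infty$ lies outside the range of \autoref{prop:dualityFullRed} and \autoref{thm:gammapq}, is handled directly: $F^1(G)=L^1(G)$ isometrically, so $(\star)$ combined with the universal estimate $\|f\|_{F^q(G)}\leq \|f\|_1$ forces equality immediately, and $\sharp$ is isometric on $L^1(G)$ by definition.

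The hardest conceptual point is the step in~(2): $p$ and $q$ lie on opposite sides of $2$, so \autoref{thm:gammapq} offers no direct comparison between $\|\cdot\|_{F^p(G)}$ and $\|\cdot\|_{F^q(G)}$. The rescue is to interpose $F^{p'}(G)$, which lies on the same side of $2$ as $q$, and then exploit the involutive character of $\sharp$ to close the chain by symmetrization.
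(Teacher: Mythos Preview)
Your argument is correct. Part~(1) is identical to the paper's proof.

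For part~(2) you take a route that is dual to, and slightly more explicit than, the paper's. The paper replaces $q$ by its conjugate $q'\in[1,2]$, invokes part~(1) to conclude that the identity on $L^1(G)$ extends to an isometric isomorphism $F^p(G)\cong F^{q'}(G)$, and then composes with the duality isomorphism $F^{q'}(G)\cong F^q(G)$ induced by $\sharp$. You instead replace $p$ by $p'\in[2,\infty)$, compare $\|\cdot\|_{F^q}$ with $\|\cdot\|_{F^{p'}}$ via the $[2,\infty)$-version of \autoref{thm:gammapq}, and close the resulting chain of inequalities using the involutive identity $f^{\sharp\sharp}=f$; this yields $F^{p'}(G)\cong F^q(G)$ via the identity, and you then compose with $F^p(G)\cong F^{p'}(G)$. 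The two approaches are mirror images of each other, but yours has the advantage of working directly with the given $L^q$-representation: the paper's appeal to part~(1) for the pair $(p,q')$ tacitly requires that $F^p(G)$ be representable on an $L^{q'}$-space, which follows from the given $L^q$-representation only after transposing and composing with the anti-automorphism $\sharp$---a step the paper does not spell out. Your involution trick sidesteps this, and your separate treatment of the edge case $p=1$ (where $p'=\infty$ falls outside \autoref{prop:dualityFullRed}) is also a detail the paper leaves implicit.
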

\begin{proof}
(1). The result is trivial when $p=q$. Suppose first that
$1\leq p < q\leq 2$.
Suppose that there exist an $L^q$-space $E$ and an isometric representation $\varphi\colon F^p(G)\to \B(E)$.
Let $\iota_p\colon L^1(G)\to F^p(G)$ be the canonical contractive inclusion with dense range. Then
\[\psi=\varphi\circ\iota_p\colon L^1(G)\to \B(E)\]
is a contractive representation of $L^1(G)$ on an $L^q$-space.

Let $f\in L^1(G)$. Using \autoref{thm:gammapq} at the first step, we deduce that
\[\|f\|_{F^q(G)}\leq \|f\|_{F^p(G)}=\|\psi(f)\|_{\B(F)}\leq \|f\|_{F^q(G)}.\]

Hence $\|f\|_{F^q(G)}=\|f\|_{F^p(G)}$. Since $f\in L^1(G)$ is arbitrary,
it follows that the identity on $L^1(G)$ extends to an isometric isomorphism $F^p(G)\to F^q(G)$,
as desired.

The case $2\leq q< p <\I$ follows from duality; see \autoref{prop:dualityFullRed}.

(2). We can assume, without loss of generality, that $1\leq p\leq 2\leq q<\I$. Denote by $q'\in [1,2]$
the H\"older conjugate exponent of $q$. By \autoref{prop:dualityFullRed}, the map $\sharp\colon L^1(G)\to L^1(G)$,
when composed with the transpose map $\B(L^q(G))\to \B(L^{q'}(G))$, extends to
an isometric isomorphism $F^q(G)\to F^{q'}(G)$. (The details are in the proof of Lemma~2.16
of~\cite{GarThi_GpsLp}, the main point being that given a representation of $G$ on an $L^q$-space,
its dual representation is a representation of $G^{\mathrm{op}}$ on $L^{q'}$ which induces the same norm
on $L^1(G)$. One composes this isometric anti-isomorphism with the map $\sharp$ to get a multiplicative
(isometric) isomorphism.) Since the identity map on $L^1(G)$ extends to an isometric isomorphism $F^p(G)\to F^{q'}(G)$ by
part (1), the result follows.\end{proof}

%By Theorem~2.21 in~\cite{GarThi_GpsLp}, there exist an $L^q$-space $\widetilde{F}$ containing $F$ as a closed
%subspace, and a topological group isomorphism
%\[
%\theta\colon\Isom(F)\to\Isom(\widetilde{F},F).
%\]
%Consider the isometric group representation of $G$ on $\widetilde{F}$ given by
%\[
%\widetilde{\rho}=\theta\circ\rho \colon G \to \Isom(\widetilde{F},F)\hookrightarrow \Isom(\widetilde{F}).
%\]
%Let $\widetilde{\pi}\colon L^1(G)\to \B(\widetilde{F})$ be the integrated form of $\widetilde{\rho}$,
%which is a nondegenerate, contractive representation of $L^1(G)$ on an $L^q$-space.

%Let $\xi\in F$. We claim that $\widetilde{\pi}(f)$ belongs to $F$. To see this, note first that if $g\in G$,
%then $\widetilde{\rho}_g(\xi)\in F$. It then follows that
%\[\widetilde{\pi}(f)(\xi)=\int_Gf(g)\widetilde{\pi}_g(\xi)\ dg\]
%belongs to $F$, as desired.

%Using the above claim at the second step, we get
%\begin{align*}
%\| \pi(f) \|
%& = \sup \left\{ \|\pi(f)\xi\|_q \colon \xi\in F, \|\xi\|_q\leq 1 \right\} \\
%&\leq  \sup \left\{ \|\widetilde{\pi}(f)\xi\|_q \colon \xi\in\widetilde{F}, \|\xi\|_q\leq 1 \right\} \\
%&=\| \widetilde{\pi}(f) \|.
%\end{align*}

%By universality of $F^q(G)$, we have
%\[\|f\|_{F^q(G)} \geq \|\widetilde{\pi}(f)\|_{\B(\widetilde{F})}\geq\|\pi(f)\|_{\B(F)}=\|f\|_{F^p(G)},\]
%and hence $\|f\|_{F^q(G)}= \|f\|_{F^p(G)}$.
%Since $f\in L^1(G)$ is arbitrary, we conclude that $\gamma_{p,q}\colon F^p(G)\to F^q(G)$ is an isometric isomorphism.

Besides $F^p(G)$ and $F^p_\lambda(G)$, the other two Banach algebras we will be concerned with, at least
when $p>1$, are the algebra $PM_p(G)$ of $p$-pseudomeasures,
and the algebra $CV_p(G)$ of $p$-convolvers. These are, respectively, the ultraweak closure, and
the bicommutant, of $F^p_\lambda(G)$ in $\B(L^p(G))$.

Algebras of pseudomeasures and of convolvers on groups have been thoroughly studied since their inception by
Herz in the early 70's; see \cite{Her_SynSbgps}.
It is clear that $PM_p(G)\subseteq CV_p(G)$, and it is conjectured that they are equal for every locally
compact group $G$ and every H\"older exponent $p\in (1,\I)$. The conjecture is known to be true if $p=2$,
or if $G$ is amenable (\cite{Her_SynSbgps}), or, more generally, if $G$ has the approximation
property (\cite{Cow_ApprProp}).

Our next goal is showing that these convolution algebras are never operator algebras when $p\neq 2$.
We first need two results about \ca s which are interesting in their own right. The first one is well-known, and
it follows, from example, from Theorem~10 of~\cite{Bon_Minimal}.

\begin{thm}\label{thm:C*incompress}
Let $A$ be a \ca, let $B$ be a Banach algebra, and let $\varphi\colon A\to B$ be a contractive, injective
homomorphism. Then $\varphi$ is isometric.
\end{thm}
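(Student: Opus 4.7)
The plan is to deduce isometry from a minimality property of the $C^*$-norm, as signaled by the citation of Theorem~10 of~\cite{Bon_Minimal}.

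First, I would pull the norm from $B$ back to $A$ by setting $|a| := \|\varphi(a)\|_{B}$ for $a\in A$. Because $\varphi$ is an algebra homomorphism into a Banach algebra, $|\cdot|$ is submultiplicative; because $\varphi$ is injective, $|\cdot|$ separates points and hence is a norm (not merely a seminorm) on the underlying algebra $A$. Contractivity of $\varphi$ translates to the inequality $|a|\leq \|a\|$ for every $a\in A$.

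Second, I would invoke the cited minimality theorem of Bonsall, whose upshot is that any submultiplicative norm on the $C^*$-algebra $A$ dominated by the $C^*$-norm must in fact coincide with it. Applied to $|\cdot|$ this yields the reverse inequality $|a|\geq \|a\|$, so $|a|=\|a\|$ for all $a\in A$, which is precisely the desired isometry.

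The main obstacle is Bonsall's theorem itself. Elementary submultiplicativity only gives
\[
|a| \;\geq\; \lim_n |a^n|^{1/n},
\]
which is the spectral radius of $a$ in the completion of $(A,|\cdot|)$, and need not match the spectral radius $r_A(a)$ taken in $A$. Bridging this gap requires $C^*$-structure: one reduces via the $C^*$-identity $\|a\|^2 = \|a^*a\|$ and the submultiplicative estimate $\|\varphi(a)\|\cdot \|\varphi(a^*)\|\geq \|\varphi(a^*a)\|$ to the case of a self-adjoint $x\in A$, for which $\|x\|$ coincides with the intrinsic quantity $r_A(x)$; injectivity of $\varphi$ restricted to the commutative $C^*$-subalgebra $C^*(x)\cong C_0(\spec(x)\setminus\{0\})$ is then used to show that characters of the closed image of $C^*(x)$ in $B$ recover every point of $\spec(x)$, so that $r_B(\varphi(x)) = r_A(x) = \|x\|$, which is enough to close the loop.
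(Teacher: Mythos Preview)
Your proposal is correct and follows exactly the route the paper indicates: the paper gives no argument beyond the sentence ``it follows from Theorem~10 of~\cite{Bon_Minimal}'', and your first two paragraphs carry out precisely that deduction---pull back the norm along the injective contractive $\varphi$ to obtain an algebra norm $|\cdot|\leq\|\cdot\|$ on $A$, then invoke Bonsall's minimality of the $C^*$-norm. Your third paragraph, outlining a proof of Bonsall's theorem itself, goes beyond what the paper provides; the reduction to self-adjoint elements via $\|\varphi(a)\|\,\|\varphi(a^*)\|\geq\|\varphi(a^*a)\|$ is clean, though the final clause about characters of $\overline{\varphi(C^*(x))}$ recovering all of $\spec(x)$ is the genuine content of Bonsall's result and would need more than a sentence to justify rigorously.
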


\begin{rem}
If in the theorem above $\varphi$ is not assumed to be injective, then the conclusion is that it is
a quotient map. On the
other hand, we must assume that $\varphi$ is contractive, and not merely continuous, for the result to hold;
counterexamples are easy to construct. The result also fails for not necessarily self-adjoint operator
algebras, even for uniform algebras.
\end{rem}

The next fact about $C^*$-algebras is proved in \cite{GarThi_ExtBid}, and had not been noticed before, at least not
in this generality. The main difficulty is dealing with non-degenerate representations, for which \autoref{thm:ProjEssSubsp}
is essential.

\begin{thm}\label{thm:C*LpComm}(\cite{GarThi_ExtBid})
Let $A$ be a $C^*$-algebra. Then the following are equivalent:
\be\item $A$ can be isometrically represented on an $L^p$-space, for some $p\in [1,\I)\setminus\{2\}$;
\item $A$ can be isometrically represented on an $L^p$-space, for all $p\in [1,\I)$;
\item $A$ is commutative (and hence $A\cong C_0(X)$ for $X=\Max(A)$).\ee
\end{thm}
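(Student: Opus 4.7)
The implication $(3) \Rightarrow (2)$ is straightforward. Using Gelfand duality to identify $A$ with $C_0(X)$ for $X = \Max(A)$, for any dense $D \subseteq X$ pointwise multiplication defines an isometric representation $C_0(X) \to \B(\ell^p(D))$ for every $p \in [1,\infty)$, since $\|f\|_\infty = \sup_{x \in D} |f(x)|$ by continuity and $\ell^p(D)$ is an $L^p$-space with counting measure. The implication $(2) \Rightarrow (1)$ is trivial.

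The content lies in $(1) \Rightarrow (3)$. Let $\varphi\colon A \to \B(E)$ be an isometric representation with $E$ an $L^p$-space and $p \in [1,\infty) \setminus \{2\}$. The plan is to argue by contradiction: assuming $A$ is non-commutative, I would produce an isometric copy of $M_2(\C)$ inside $\B(E)$ and then rule it out using the rigidity of the operator structure on $L^p$ for $p \neq 2$.

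For the reductions, since every $C^*$-algebra has a contractive approximate identity and $L^p$ is reflexive for $p > 1$, \autoref{thm:ProjEssSubsp} lets me pass to the non-degenerate subrepresentation on the essential subspace $E_0 = \overline{\spn}\,\varphi(A)E$, which is again an $L^p$-space by Tzafriri's theorem (as used in the proof of \autoref{prop:defagree}); the case $p = 1$ requires a separate direct argument. Next, a non-commutative $C^*$-algebra admits an irreducible $*$-representation $\pi\colon A \to \B(H)$ with $\dim H \geq 2$, and Kadison's transitivity theorem applied to a two-dimensional subspace $K \subseteq H$ produces approximate matrix units in $A$ whose compressions realize all of $\B(K) \cong M_2(\C)$. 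Combining this with \autoref{thm:C*incompress}, which promotes contractive injective $*$-homomorphisms from $C^*$-algebras to isometries, I obtain an isometric representation of $M_2(\C)$ on an $L^p$-space.

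The main obstacle, and the heart of the argument, is to rule out such an isometric representation of $M_2(\C)$ when $p \neq 2$. I would combine two classical rigidity results for $L^p$-spaces with $p \neq 2$: Lamperti's theorem, characterizing the surjective isometries of $L^p$ as weighted composition operators, and the Ando-Tzafriri description of contractive idempotents on $L^p$ as conditional expectations onto sub-$\sigma$-algebras. Applied to the images of the matrix units of $M_2(\C)$, these force the ambient $L^p$-space to decompose as $\ell^p_2 \otimes Y$ for some $L^p$-space $Y$, with the putative representation compatible with the tensor structure. A direct calculation then shows that the unitary $\tfrac{1}{\sqrt{2}}\bigl(\begin{smallmatrix} 1 & -1 \\ 1 & 1 \end{smallmatrix}\bigr) \in M_2(\C)$, of $C^*$-norm $1$, acts on $\ell^p_2 \otimes Y$ with operator norm $2^{|1/p-1/2|} > 1$ whenever $p \neq 2$, contradicting the isometry. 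The delicate implementation of this plan — in particular, making precise the reduction to an honest isometric copy of $M_2(\C)$ rather than just a subquotient — is the substance of \cite{GarThi_ExtBid}.
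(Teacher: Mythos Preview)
This theorem is stated in the paper as a quotation from \cite{GarThi_ExtBid}, without proof; the only hint the paper gives is that ``the main difficulty is dealing with non-degenerate representations, for which \autoref{thm:ProjEssSubsp} is essential.'' So there is no in-paper argument to compare against beyond that single remark.

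Your implications $(3)\Rightarrow(2)\Rightarrow(1)$ are fine, and for $(1)\Rightarrow(3)$ your reduction to a non-degenerate representation via \autoref{thm:ProjEssSubsp} (for $p>1$, with $p=1$ flagged for separate treatment) matches the paper's hint. The Lamperti/contractive-idempotent endgame for ruling out an isometric copy of $M_2(\C)$ on $L^p$ when $p\neq 2$ is a standard and correct route once such a copy is in hand.

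The genuine gap is the passage from ``$A$ is non-commutative'' to ``there is an isometric copy of $M_2(\C)$ inside $\B(E)$.'' Kadison transitivity does not do this: it produces individual elements of $A$ acting as prescribed operators on a finite-dimensional subspace of $H$, not a $*$-homomorphism $M_2(\C)\to A$, and a non-commutative $C^*$-algebra need not contain $M_2(\C)$ as a $C^*$-subalgebra at all (there are simple projectionless examples). You acknowledge this by deferring the ``honest isometric copy of $M_2(\C)$'' to \cite{GarThi_ExtBid}, but that is exactly where the paper itself already points --- so on the decisive step your proposal does not go beyond the paper's own citation. A route consistent with the title of \cite{GarThi_ExtBid} would be to extend $\varphi$ to a representation of the bidual $A^{**}$ (a non-commutative von Neumann algebra, which \emph{does} contain $M_2(\C)$) on a suitable $L^p$-space; alternatively one can sidestep $M_2$ entirely by using that isometric unital homomorphisms preserve hermitian elements, and that hermitian operators on $L^p(\mu)$ for $p\neq 2$ are real multiplication operators and hence commute.
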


We also need to recall the following, whose proof can be found, for example, in~\cite{GarThi_GpsLpFunct}.
For a Banach algebra $A$, we denote its left (respectively, right, two-sided) multiplier algebra by
$M_L(A)$ (respectively, $M_R(A)$ and $M(A)$), and we write $\iota_L\colon A\to M_L(A)$
(respectively, $\iota_R\colon A\to M_R(A)$ and $\iota\colon A\to M(A)$) for the canonical
inclusion. When confusion may arise, we write the product in $M(A)$ with a dot.

\begin{thm}\label{thm:multipliers}
Let $A$ be a Banach algebra with a left (respectively, right, two-sided)
contractive approximate identity.
Let $X$ be a Banach space and let $\varphi\colon A\to \B(X)$ be a non-degenerate contractive
representation. Then there exists a unique unital contractive homomorphism $\psi_L\colon M_L(A)\to \B(X)$
(respectively, $\psi_R\colon M_R(A)\to \B(X)$ and $\psi\colon M(A)\to \B(X)$)
satisfying $\psi_L\circ\iota_L=\varphi$ (respectively, $\psi_R\circ\iota_R=\varphi$ and
$\psi\circ\iota=\varphi$).
Moreover, the map $\psi_L$ is given by
\[\psi_L(m)(\varphi(a)(\xi))=\varphi(m\cdot a)(\xi)\]
for all $m\in M_L(A)$, for all $a\in A$ and for all $\xi\in X$.
\end{thm}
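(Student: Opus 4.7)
I will focus on the left multiplier case and argue that the right and two-sided cases follow by analogous reasoning. The strategy is to define $\psi_L(m)$ directly on the dense essential subspace $V:=\mathrm{span}\{\varphi(a)\xi : a\in A,\ \xi\in X\}$ (dense by non-degeneracy of $\varphi$) via the formula in the statement, and then extend by continuity. The role of the contractive left approximate identity $(e_\lambda)$ of $A$, together with the defining left multiplier relation $m\cdot(ab) = (m\cdot a)b$, will be to guarantee simultaneously that this formula is well-defined on $V$ and that $\psi_L(m)$ is contractively bounded, hence extends to all of $X$.

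The crucial step is the identity
\[
\sum_i \varphi(m\cdot a_i)\xi_i \;=\; \lim_\lambda \varphi(m\cdot e_\lambda)\Big(\sum_i \varphi(a_i)\xi_i\Big),
\]
which I would verify pointwise via $\varphi(m\cdot a_i)\xi_i = \lim_\lambda \varphi(m\cdot(e_\lambda a_i))\xi_i = \lim_\lambda \varphi((m\cdot e_\lambda)a_i)\xi_i = \lim_\lambda \varphi(m\cdot e_\lambda)\varphi(a_i)\xi_i$, using $e_\lambda a_i\to a_i$, continuity of $m\cdot$, and the left multiplier relation, and then summing the finitely many $i$'s. This single identity packs in both well-definedness of $\psi_L(m)$ on $V$ (the right-hand side depends only on $\sum_i\varphi(a_i)\xi_i$, not on the chosen decomposition) and the desired norm bound: since $\|\varphi(m\cdot e_\lambda)\|\leq \|m\cdot e_\lambda\|\leq \|m\|$, we obtain $\|\psi_L(m)v\|\leq \|m\|\,\|v\|$ for $v\in V$, so $\psi_L(m)$ extends uniquely to a bounded operator on $X$ with $\|\psi_L(m)\|\leq\|m\|$. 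The algebraic properties then follow from the defining formula: multiplicativity from $(mn)\cdot a = m\cdot(n\cdot a)$, unitality is immediate, and $\psi_L\circ\iota_L=\varphi$ from $\iota_L(b)\cdot a = ba$. Uniqueness is automatic: any candidate $\widetilde\psi$ satisfies $\widetilde\psi(m)\varphi(a)\xi = \widetilde\psi(m\cdot\iota_L(a))\xi = \widetilde\psi(\iota_L(m\cdot a))\xi = \varphi(m\cdot a)\xi$, so it agrees with $\psi_L$ on the dense set $V$.

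For the right multiplier case, I would repeat the argument with a contractive right approximate identity and the right multiplier relation $R(ab) = aR(b)$; the formula for $\psi_R$ is not literally the one displayed above and requires a parallel setup to ensure that $\psi_R\circ\iota_R=\varphi$ (this is where care is needed, since right multipliers act from the right of $A$ and must be translated into a left action on $X$). The two-sided case then follows by combining the two constructions, or alternatively by restricting the left multiplier construction via the canonical embedding $M(A)\hookrightarrow M_L(A)$ and verifying that the formula $\psi(m)(\varphi(a)\xi) = \varphi(m\cdot a)\xi$ holds. I expect the main obstacle to be the key identity in the previous paragraph: it is the single point where the multiplier property of $m$ and the contractive approximate identity of $A$ must cooperate, and it is what simultaneously supplies well-definedness, the contractive norm bound, and hence the existence of $\psi_L$.
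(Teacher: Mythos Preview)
Your argument is correct and is the standard proof of this extension result. The paper does not actually prove this theorem: it is stated as a recalled fact with the proof attributed to \cite{GarThi_GpsLpFunct}, so there is no in-paper proof to compare against. Your key identity
\[
\sum_i \varphi(m\cdot a_i)\xi_i \;=\; \lim_\lambda \varphi(m\cdot e_\lambda)\Big(\sum_i \varphi(a_i)\xi_i\Big)
\]
is exactly the mechanism one expects: it simultaneously yields well-definedness on the essential subspace and the contractive bound, and the remaining algebraic verifications (multiplicativity, unitality, the intertwining relation, uniqueness) are routine consequences of the left multiplier identity $m\cdot(ab)=(m\cdot a)b$ together with the relation $m\,\iota_L(a)=\iota_L(m\cdot a)$ in $M_L(A)$, which you use implicitly in the uniqueness step. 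Your caution about the right multiplier case is also appropriate: the product on $M_R(A)$ is opposite composition in order to make $\iota_R$ multiplicative, so the parallel formula and the verification that $\psi_R$ is a homomorphism require a small bookkeeping adjustment, but nothing conceptually new.
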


The following theorem generalizes a result of Neufang and Runde, Theorem~2.2 in~\cite{NeuRun_column}, where the
authors assumed that the group in question was amenable and had a closed infinite abelian subgroup. Observe
that for $G=\Z$, \autoref{thm:FplambdaG} follows from the main result of \cite{GarThi_Quot}, since there it
is shown that for $p\neq 2$, there exists a semisimple quotient $A_V$ of $F^p(\Z)$ that is not
representable on $L^q$ for any $1\leq q<\I$. Indeed, if $F^p(\Z)$ were an operator algebra, so would be
any of its quotients. Recall that a unital commutative, semisimple operator
algebra is isometrically isomorphic to a closed subalgebra of $C(X)$ for a compact Hausdorff space $X$ (that is,
it is a uniform algebra). Hence, if $F^p(\Z)$ were an operator algebra, then $A_V$ would be a uniform algebra, hence isometrically representable on an $L^q$-space
for any $q\in [1,\I)$.

\autoref{thm:FplambdaG} will be used in the proof of \autoref{thm:FplambdaGLq}, which gives a much more general
result. In the proof below, for a locally compact group $G$ and $p\in [1,\I)$, and to emphasize the role played by $G$,
we will denote by $\gamma_{p,2}^G\colon F^p(G)\to C^*(G)$ the map from \autoref{thm:gammapq}.

\begin{thm}
\label{thm:FplambdaG}
Let $G$ be a locally compact group and let $p\in [1,\I)$.
Then one of $F^p(G)$, $F^p_\lambda(G)$, $PM_p(G)$ or $CV_p(G)$ is an operator algebra if and only if
either $p=2$ or $G$ is the trivial group.
\end{thm}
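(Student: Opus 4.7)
The ``if'' direction is immediate: when $p=2$, all four algebras are familiar group $C^*$-algebras, and when $G$ is trivial each reduces to $\C$.

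For the converse, assume $p\in[1,\infty)\setminus\{2\}$ and $G$ is nontrivial, and suppose for contradiction one of the four algebras is an operator algebra. The inclusions $F^p_\lambda(G)\hookrightarrow PM_p(G)\hookrightarrow CV_p(G)$ are isometric inside $\B(L^p(G))$, so any isometric Hilbert-space representation of $PM_p(G)$ or $CV_p(G)$ restricts to one of $F^p_\lambda(G)$; it therefore suffices to rule out $F^p(G)$ and $F^p_\lambda(G)$ being operator algebras.

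Suppose first that $F^p(G)$ is an operator algebra. Applying \autoref{prop:FpGReprLq} with $q=2$ (since $|1/p-1/2|>0=|1/q-1/2|$) yields an isometric isomorphism $F^p(G)\cong C^*(G)$, so $F^p(G)$ inherits the structure of a $C^*$-algebra. The contractive map $\kappa_p\colon F^p(G)\to F^p_\lambda(G)$ factors through the $C^*$-quotient $F^p(G)/\ker(\kappa_p)$, and the induced injection into $F^p_\lambda(G)$ is isometric by \autoref{thm:C*incompress}; hence $F^p_\lambda(G)$ is itself a $C^*$-algebra. Since $F^p_\lambda(G)\hookrightarrow\B(L^p(G))$ is isometric and $p\neq 2$, \autoref{thm:C*LpComm} forces $F^p_\lambda(G)$ to be commutative, so $G$ is abelian (by density of $L^1(G)$). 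Amenability of $G$ together with \autoref{thm:Amen} give $F^p(G)=F^p_\lambda(G)\cong C_0(\widehat{G})$ isometrically via an identity-extending map (up to the involution $\sharp$ when $p>2$; see \autoref{rem:p2}). Using subgroup and quotient functoriality from~\cite{GarThi_GpsLpFunct}, the same isomorphism passes to a nontrivial closed cyclic subgroup or quotient $H$ of $G$: for $H=\Z$ this contradicts \autoref{thm: FpZ not isom} applied with $q=2$, while for $H=\Z_n$ with $n\geq 2$ one further quotients to $\Z_2$ (when $n$ is even) to contradict \autoref{prop: FpZ2 not isom}, or carries out an analogous direct norm computation showing $\|\cdot\|_{F^p(\Z_n)}$ strictly dominates $\|\cdot\|_\infty$ on $L^1(\Z_n)$ when $n$ is odd.

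For the case where $F^p_\lambda(G)$ is an operator algebra, the argument is analogous. An isometric $\phi\colon F^p_\lambda(G)\to\B(\Hi)$ composed with $\lambda_p$ yields a contractive Hilbert-space representation of $L^1(G)$, giving the upper bound $\|f\|_{F^p_\lambda(G)}\leq\|f\|_{C^*(G)}$ for all $f\in L^1(G)$. A Riesz--Thorin interpolation of the convolution operator $\lambda(f)$ between $L^p(G)$ and $L^{p'}(G)$, combined with the duality $\|\lambda_p(f)\|=\|\lambda_{p'}(f^\sharp)\|$, provides the complementary lower bound $\|f\|_{C^*_\lambda(G)}\leq\|f\|_{F^p_\lambda(G)}$. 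In the amenable setting these bounds collapse to $\|f\|_{F^p_\lambda(G)}=\|f\|_{C^*(G)}$, reducing to the preceding $F^p(G)$ argument. The main obstacle is the non-amenable case, where one must show that $F^p_\lambda(G)$ still inherits a $C^*$-structure compatible with its isometric $L^p$-embedding, so that \autoref{thm:C*LpComm} can be invoked to force $G$ abelian (and hence amenable), completing the reduction to the preceding paragraph.
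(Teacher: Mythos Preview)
Your proposal has two genuine gaps.

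The most serious is in the $F^p_\lambda(G)$ case: you explicitly flag the non-amenable case as ``the main obstacle'' and do not resolve it. Your proposed upper bound $\|f\|_{F^p_\lambda(G)}\leq\|f\|_{C^*(G)}$ does not follow just from $\phi\circ\lambda_p$ being a contractive Hilbert-space representation of $L^1(G)$: the norm on $C^*(G)$ is universal only for \emph{$\ast$-representations}, and nothing so far guarantees that $\phi\circ\lambda_p$ respects the involution. The paper supplies exactly the missing idea. Since $F^p_\lambda(G)$ has a contractive approximate identity, one may take the representation $\varphi$ to be non-degenerate and extend it to the multiplier algebra $M(F^p_\lambda(G))$, which (by \cite{GarThi_Wendel}) is identified with the centralizer algebra inside $\B(L^p(G))$. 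The point mass measures $\delta_g$ sit there as invertible isometries, so their images under the extended representation are invertible isometries of a Hilbert space, i.e.\ unitaries. This gives a genuine unitary representation $u\colon G\to\mathcal{U}(\Hi)$ whose integrated form is $\ast$-preserving and agrees with $\varphi$ on $L^1(G)$; hence $\varphi(F^p_\lambda(G))$ is a $C^*$-algebra. Only then can \autoref{thm:C*LpComm} be invoked. This argument works uniformly, with no amenability hypothesis.

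The second gap is the endgame for finite cyclic groups of odd order. Your phrase ``carries out an analogous direct norm computation'' is a placeholder, and the paper in fact remarks (after \autoref{prop: FpZ2 not isom}) that the explicit computation for $\Z_2$ does not obviously generalize even to $\Z_3$. The paper instead argues as follows: once $G$ is abelian, $\gamma_{p,2}\colon F^p_\lambda(G)\to C^*_\lambda(G)$ is an isometric isomorphism by \autoref{prop:FpGReprLq}, so \autoref{thm:gammapq}(b) forces $G$ finite; passing to the subgroup generated by an element of maximal order $n$ and using the classification of invertible isometries in $F^p_\lambda(\Z_n)$ from \cite{GarThi_BanAlg} (spectral configurations), one sees that for $p\neq 2$ not every element of $(S^1)^n$ has norm one in $F^p_\lambda(\Z_n)$, contradicting the isometric identification with $C^*_\lambda(\Z_n)$ unless $n=1$.
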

\begin{proof}
The ``if" implication is obvious if $G$ is the trivial group, since the associated Banach algebras
are all $\C$, while the statement is clear if $p=2$.

For the ``only if" implication, it is clear that if either $PM_p(G)$ or $CV_p(G)$ is an operator algebra, then so is $F^p_\lambda(G)$,
since there are isometric inclusions
\[F^p_\lambda(G)\subseteq PM_p(G)\subseteq CV_p(G).\]

Assume now that $F^p(G)$ is an operator algebra and that $p\neq 2$.
There is a canonical identification of $F^p(G)$ with $C^*(G)$, in view of \autoref{prop:FpGReprLq} and \autoref{rem:p2}.
Let $\kappa_p\colon F^p(G)=C^*(G)\to F^p_\lambda(G)$ denote the canonical contractive homomorphism with dense range.
It is well-known that the quotient $C^*(G)/\ker(\kappa_p)$ is a $C^*$-algebra. The induced map
\[\widehat{\kappa_p}\colon C^*(G)/\ker(\kappa_p)\to F^p_\lambda(G)\]
is an injective, contractive homomorphism. Now, \autoref{thm:C*incompress}
shows that $\widehat{\kappa_p}$ is isometric. Since $\kappa_p$, and hence $\widehat{\kappa}_p$, has dense range, it follows that
$F^p_\lambda(G)$ is isometrically isomorphic to $C^*(G)/\ker(\kappa_p)$.
In particular, $F^p_\lambda(G)$ is a $C^*$-algebra, and hence an operator algebra itself.

It is therefore enough to show the statement assuming that $F^p_\lambda(G)$ is an operator algebra.
Let $\Hi$ be a Hilbert space and let
$\varphi\colon F^p_\lambda(G)\to \B(\Hi)$ be an isometric representation.

\textbf{Claim:} $F^p_\lambda(G)$ is a $C^*$-algebra.
Note that $A$ has a (two-sided) contractive approximate identity, since so does $L^1(G)$ and there is a contractive
homomorphism $\iota\colon L^1(G)\to F^p_\lambda(G)$ with dense range. Moreover, since
\[\overline{\{\varphi(a)\eta\colon a\in F^p_\lambda(G),\eta\in\Hi\}}\]
is itself a Hilbert space, we may assume that the representation $\varphi$ is non-degenerate.
It follows from \autoref{thm:multipliers} that the algebra $M(F^p_\lambda(G))\subseteq \B(F^p_\lambda(G))$
of multipliers on $F^p_\lambda(G)$, is unitally representable on $\Hi$.

By Corollary~~2.5 in~\cite{GarThi_Wendel}, there is a canonical isometric identification of the
multiplier algebra $M(F^p_\lambda(G))\subseteq \B(F^p_\lambda(G))$, with the Banach algebra
\[C(F^p_\lambda(G))=\{x\in \B(L^p(G))\colon xa,ax\in F^p_\lambda(G) \mbox{ for all } a\in F^p_\lambda(G)\}\subseteq \B(L^p(G))\]
of centralizers of $F^p_\lambda(G)$.
Denote by $\psi\colon C(F^p_\lambda(G))\to \B(\Hi)$ the resulting unital, isometric representation.

There is an obvious identification of $G$ with a subgroup of the invertible isometries of $C(F^p_\lambda(G))$,
given by letting a group element $g\in G$ act on $L^p(G)$ as the convolution operator with respect to the
point mass measure $\delta_g$. Now, for $g\in G$, set $u_g=\psi(\delta_{g})$. Then $u_g$ is an invertible isometry
on $\Hi$, that is, a unitary operator. Moreover, the map $u\colon G\to\U(\Hi)$, given by $g\mapsto u_g$,
is easily seen to be a strongly-continuous unitary representation of $G$ on $\Hi$. The integrated form
$\rho_u\colon L^1(G)\to \B(\Hi)$ of $u$ is therefore a contractive, non-degenerate $\ast$-homomorphism.
Whence the subalgebra $\rho_u(L^1(G))\subseteq\B(\Hi)$ is closed under the adjoint operation.
Moreover, it is clear that the following diagram commutes:
\beqa\xymatrix{
L^1(G)\ar[d]_-\iota \ar[dr]^-{\rho_u} & \\
F^p_\lambda(G)\ar[r]_-{\varphi}&\B(\Hi).}
\eeqa
We conclude that $\varphi(F^p_\lambda(G))$, which equals $\overline{\rho_u(L^1(G))}$,
is a closed $\ast$-subalgebra of $\B(\Hi)$, that is, a \ca. The claim is proved.

It follows from \autoref{thm:C*LpComm} that $F^p_\lambda(G)$ is commutative.
Thus $G$ is itself commutative, and in particular $F^q_\lambda(G)=F^q(G)$ for all $q\in [1,\I)$,
by Theorem~3.7 in \cite{GarThi_GpsLp}.

The map $\gamma^G_{p,2}\colon F^p_\lambda(G)\to C^*_\lambda(G)$ is an isometric
isomorphism by \autoref{prop:FpGReprLq}. The fact that $\gamma^G_{p,2}$ is surjective implies that $G$ is finite,
by \autoref{thm:gammapq}. Using that $\gamma^G_{p,2}$ is isometric, we will show that $G$ must be the
trivial group.

Using finiteness of $G$, let $g\in G$ be an element with maximum order. Set $n=\mbox{ord}(g)\geq 1$,
and let $j\colon \Z_n\hookrightarrow G$ be the group homomorphism determined by $j(1)=g$. By Proposition~2.3
in~\cite{GarThi_GpsLpFunct}, there are natural isometric embeddings
$j_p\colon F^p_\lambda(\Z_n)\hookrightarrow F^p_\lambda(G)$
and $j_2\colon C^*_\lambda(\Z_n)\hookrightarrow C^*_\lambda(G)$.
Naturality of the maps involved implies that the following diagram is commutative:
\beqa
\xymatrix{
F^p_\lambda(\Z_n)\ar[r]^-{j_p}\ar[d]_-{\gamma^{\Z_n}_{p,2}} & F^p_\lambda(G)\ar[d]^-{\gamma^{G}_{p,2}} \\
C^*_\lambda(\Z_n)\ar[r]_-{j_2} & C^*_\lambda(G).
}\eeqa
In particular, $\gamma_{p,2}^{\Z_n}\colon F^p_\lambda(\Z_n)\to C^*_\lambda(\Z_n)$ is an isometric isomorphism.
(This map is really just the identity on $\C^n$.)

Set $\omega=e^{\frac{2\pi i}{n}}\in S^1$.
Using that $p\neq 2$ together with Proposition~2.8 in~\cite{GarThi_BanAlg} (see also the comments above it),
we conclude that if $x\in F^p_\lambda(\Z_n)$ is
an invertible isometry, then there exist $\zeta\in S^1$ and $k\in \{0,\ldots,n-1\}$ such that, under the
algebraic identification of $F^p_\lambda(\Z_n)$ with $\C^n$, we have
\[x=\left(\zeta, \zeta\omega^k,\ldots,\zeta\omega^{k(n-1)}\right).\]
In particular, if $n>1$, then not every element in $(S^1)^n\subseteq \C^n$ has norm one in $F^p_\lambda(\Z_n)$.
Since this certainly is the case in $C^*_\lambda(\Z_n)$, we must have $n=1$. By the choice of $n$, we conclude
that $G$ must be the trivial group, and the proof of the theorem is finished.
\end{proof}

In contrast with \autoref{thm:FplambdaG}, we point out that some $L^p$-operator group
algebras are \emph{contractively and isomorphically}
representable on Hilbert spaces. For example, for any finite group $G$, abelian or not, and for any $p\in [1,\I)$,
the map $\gamma_{p,2}\colon F^p(G)\to C^*(G)$ from \autoref{thm:gammapq}, is a
contractive isomorphism.

For a locally compact group $G$, we review the definitions of the Banach algebras $M^p(G)$ and $M^p_\lambda(G)$ from \cite{GarThi_GpsLpFunct}.
Recall that $M^1(G)$ is the (unital) Banach algebra of finite complex Radon measures on $G$, and it can be identified with
the multiplier algebra of $L^1(G)$. In particular, observe that the left regular representation
$\lambda_p\colon L^1(G)\to \B(L^p(G))$ can be extended to a representation $\lambda_p\colon M^1(G)\to \B(L^p(G))$.

\begin{df}
Let $G$ be a locally compact group, and let $p\in [1,\I)$. Define $M^p(G)$ to be the completion of $M^1(G)$ in the norm given by
\[\|\mu\|_{M^p(G)}=\sup\left\{\|\pi(\mu)\|\colon \pi\colon M^1(G) \to \B(L^p(\nu)) \mbox{ is a contractive homomorphism}\right\}\]
for $\mu\in M^1(G)$.

The algebra $M^p_\lambda(G)$ is the completion of $M^1(G)$ in the norm
\[\|\mu\|_{M^p_\lambda(G)}=\|\lambda_p(\mu)\|_{\B(L^p(G))}\]
for $\mu\in M^1(G)$.
\end{df}

\begin{rem}\label{rem:InclMpG}
For $p\in [1,\I)$, one can show that there are natural (isometric) inclusions
\[F^p(G)\subseteq M^p(G)\subseteq M(F^p(G))\]
and
\[F^p_\lambda(G)\subseteq M^p_\lambda(G) \subseteq M(F^p_\lambda(G)).\]
Somewhat less easy is the existence of an inclusion $M^p_\lambda(G)\subseteq CV_p(G)$ for $p\in (1,\I)$; see \cite{GarThi_GpsLpFunct}.
\end{rem}

The following is a particular case of a result in \cite{GarThi_GpsLpFunct}.

\begin{prop}\label{prop:Funct}
Let $p\in [1,\I)$, let $G$ be a locally compact group, and let $H\subseteq G$ be an amenable subgroup. Then
the inclusion $H\hookrightarrow G$ induces canonical isometric unital homomorphisms
\[M^p_\lambda(H)\to M^p_\lambda(G) \ \ \mbox{ and } \ \ M^p(H)\to M^p(G).\]
\end{prop}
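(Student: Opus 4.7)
The plan is to split the statement into its two halves and, in each case, first observe that the natural push-forward (extension-by-zero) map $M^1(H)\to M^1(G)$ is a unital contractive homomorphism sending $\delta_e$ to $\delta_e$, and then to show that the norms induced on $M^1(H)$ from $H$ and from $G$ agree.

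For the reduced algebras $M^p_\lambda$, I would fix a Borel cross section $G/H\to G$ and use the resulting disintegration of the Haar measure on $G$ to obtain an isometric identification of Banach spaces $L^p(G)\cong L^p(G/H; L^p(H))$. Unravelling the convolution formula shows that for $\mu\in M^1(H)$, the operator $\lambda_p^G(\mu)$ corresponds, under this identification, to the amplification $\mathrm{id}_{L^p(G/H)}\otimes \lambda_p^H(\mu)$. Since amplification by the identity on an $L^p$-space preserves operator norm, this yields the equality $\|\lambda_p^G(\mu)\|_{\B(L^p(G))}=\|\lambda_p^H(\mu)\|_{\B(L^p(H))}$, and hence the required isometric map $M^p_\lambda(H)\to M^p_\lambda(G)$. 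Amenability of $H$ is not used in this half.

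For the full algebras $M^p$, the inequality $\|\mu\|_{M^p(G)}\le \|\mu\|_{M^p(H)}$ for $\mu\in M^1(H)$ is immediate: any contractive representation $\pi\colon M^1(G)\to \B(L^p(\nu))$ restricts, via the canonical inclusion $M^1(H)\hookrightarrow M^1(G)$, to a contractive representation of $M^1(H)$, so $\|\pi(\mu)\|\le \|\mu\|_{M^p(H)}$ for such $\mu$. The opposite inequality is the substantial direction, and here amenability of $H$ is essential. Given any contractive representation $\rho\colon M^1(H)\to \B(E)$ on an $L^p$-space $E$, I would construct the $L^p$-induced representation $\mathrm{Ind}_H^G(\rho)\colon M^1(G)\to \B(L^p(G/H; E))$ by the standard formula, using the same cross section as above. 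Amenability of $H$ is precisely the input needed to ensure contractivity of $\mathrm{Ind}_H^G(\rho)$, while a computation analogous to the one for $\lambda_p$ shows that $\|\mathrm{Ind}_H^G(\rho)(\mu)\|\ge \|\rho(\mu)\|$ for $\mu\in M^1(H)$. Taking the supremum over all such $\rho$ then yields $\|\mu\|_{M^p(H)}\le \|\mu\|_{M^p(G)}$.

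The main obstacle is establishing contractivity of the $L^p$-induced representation under the amenability hypothesis on $H$. This is a non-trivial analytic fact: in the $C^*$-setting it is classical, but in the genuine $L^p$-context it requires a F\o lner-type approximation argument, cutting down $\mathrm{Ind}_H^G(\rho)$ by finitely supported probability measures on $H$ to reduce to finitely many cosets, and carefully tracking how the $L^p$-norm behaves. This is precisely the technical heart of the functoriality result in \cite{GarThi_GpsLpFunct} of which the present proposition is a particular case, and rather than re-derive it here I would simply invoke it.
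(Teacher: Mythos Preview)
The paper does not give a proof of this proposition at all: it is stated as ``a particular case of a result in \cite{GarThi_GpsLpFunct}'' and immediately used. So there is no argument in the paper to compare your sketch against beyond the bare citation, and since you also end by invoking \cite{GarThi_GpsLpFunct}, in that narrow sense you match the paper.

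However, the part of your sketch that you do work out has a genuine gap. Your treatment of the reduced half relies on a Borel cross section $G/H\to G$ and the disintegration $L^p(G)\cong L^p(G/H;L^p(H))$, under which $\lambda_p^G(\mu)$ becomes $\id\otimes\lambda_p^H(\mu)$. This requires $H$ to be \emph{closed} in $G$. The proposition does not assume this, and the paper explicitly applies it (in the proof of \autoref{lma:SubgpH}) to the cyclic subgroup generated by a single element, taken with the discrete topology and noted to be ``not necessarily closed'' --- for instance $\Z$ sitting densely in $\T$ via an irrational rotation. In that situation $G/H$ is not Hausdorff, no cross section exists, and the amplification picture collapses entirely. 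So your assertion that ``amenability of $H$ is not used in this half'' is wrong in exactly the case the paper needs: for non-closed $H$ one typically routes the reduced statement through the full one via the amenable identification $M^p_\lambda(H)=M^p(H)$. Relatedly, your location of amenability in the full half (``needed to ensure contractivity of $\mathrm{Ind}_H^G(\rho)$'') is off: for closed $H$, $L^p$-induction built with a quasi-invariant measure and the Radon--Nikodym cocycle is contractive regardless, and the norm comparison goes through without amenability. Amenability is doing its real work in the non-closed case, not where you placed it.
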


The way in which the above proposition will be used is through the following lemma. We state
it and prove it in greater generality than needed here for use elsewhere.

\begin{lma}\label{lma:SubgpH}
Let $p\in [1,\I)$ and let $G$ be a non-trivial locally compact group. Denote by $A^p(G)$ any of the following
algebras: $F^p(G)$, $M^p(G)$, $F^p_\lambda(G)$, $M^p_\lambda(G)$, $PM_p(G)$ or $CV_p(G)$ (the last
two in the case $p>1$). If $A^p(G)$ is isometrically and non-degenerately representable on a Banach
space $X$, then there exist a cyclic subgroup $H$ (finite or infinite) of $G$, and a unital isometric
representation $\psi\colon F^p_\lambda(H)\to \B(X)$.
\end{lma}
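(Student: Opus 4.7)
The plan is to pick a cyclic subgroup $H\subseteq G$ and transport $\varphi$ through a chain of isometric embeddings together with the multiplier extension to produce the required unital isometric representation of $F^p_\lambda(H)$. The main tools I will rely on are \autoref{prop:Funct}, the isometric inclusions recorded in \autoref{rem:InclMpG}, the multiplier extension \autoref{thm:multipliers}, and the identification $F^p(H)=F^p_\lambda(H)$ from \autoref{thm:Amen}, which applies because the chosen $H$ will be amenable.

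First, I would pick any non-trivial $g\in G$ and set $H=\langle g\rangle$, which is cyclic (isomorphic to $\Z$ or $\Z_n$), abelian, hence amenable. Since $H$ is discrete, $F^p_\lambda(H)=F^p(H)$ is unital with unit $\delta_e$. Using \autoref{prop:Funct} together with \autoref{rem:InclMpG}, I would assemble an isometric unital homomorphism $\iota\colon F^p_\lambda(H)\to M(A^p(G))$: when $A^p(G)\in\{F^p(G),M^p(G)\}$ via $F^p(H)\hookrightarrow M^p(H)\to M^p(G)\subseteq M(F^p(G))$; and when $A^p(G)\in\{F^p_\lambda(G),M^p_\lambda(G),PM_p(G),CV_p(G)\}$ via $F^p_\lambda(H)\hookrightarrow M^p_\lambda(H)\to M^p_\lambda(G)$, which lies inside $A^p(G)$ in the last three cases and inside $M(A^p(G))$ in the first of this second group. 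Each link is isometric and unital, so $\iota$ is too.

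Each $A^p(G)$ has a two-sided contractive approximate identity (arising from the standard approximate identity of $L^1(G)$ in the $F^p$ and $F^p_\lambda$ cases, and trivially from the unit in the measure algebras and in $PM_p(G),CV_p(G)$), so \autoref{thm:multipliers} extends the non-degenerate $\varphi$ to a unique unital contractive $\widetilde\varphi\colon M(A^p(G))\to\B(X)$. Setting $\psi=\widetilde\varphi\circ\iota\colon F^p_\lambda(H)\to\B(X)$, the relation $\psi(\delta_e)=\widetilde\varphi(1)=\id_X$ gives unitality, and $\psi$ is contractive because both $\widetilde\varphi$ and $\iota$ are. Whenever $A^p(G)\neq F^p(G)$, the image of $\iota$ lies inside $A^p(G)\subseteq M(A^p(G))$, on which $\widetilde\varphi$ agrees with the isometric $\varphi$; combining with the isometry of $\iota$ yields that $\psi$ is isometric.

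The remaining case $A^p(G)=F^p(G)$ is the main obstacle: the image of $\iota$ only meets $F^p(G)$ through its ambient multiplier algebra, so isometry on the image is not automatic from contractivity of $\widetilde\varphi$. To handle it, I would invoke the isometric embedding $F^p_\lambda(H)\hookrightarrow F^p_\lambda(G)$ from Proposition~2.3 of~\cite{GarThi_GpsLpFunct} (cited in the proof of \autoref{thm:FplambdaG}) together with amenability of $H$ to obtain, for $f\in L^1(H)$, the sandwich
\[
\|f\|_{F^p(H)}=\|f\|_{F^p_\lambda(H)}=\|f\|_{F^p_\lambda(G)}\leq \|f\|_{F^p(G)}\leq \|f\|_{F^p(H)},
\]
which forces equality throughout and shows that the canonical map $F^p(H)\to F^p(G)$ is isometric. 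Composing with the isometric $\varphi$ on $F^p(G)$ yields an isometric unital $\psi$ in this case as well. A minor bookkeeping point is that \autoref{prop:Funct} is to be applied to $H$ with its natural (discrete) locally compact structure, which is automatic since finite cyclic subgroups are always closed in $G$ and $\langle g\rangle$ for $g$ of infinite order can be treated with the discrete topology.
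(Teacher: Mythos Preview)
Your overall strategy matches the paper's, but there is a genuine gap in the two non-unital cases $A^p(G)\in\{F^p(G),F^p_\lambda(G)\}$. You yourself note that for $A^p(G)=F^p_\lambda(G)$ the chain only lands in $M(A^p(G))$, yet two paragraphs later you assert ``whenever $A^p(G)\neq F^p(G)$ the image of $\iota$ lies inside $A^p(G)$'' and conclude isometry from $\widetilde\varphi|_{A^p(G)}=\varphi$. That inference fails for $F^p_\lambda(G)$. Your separate treatment of $F^p(G)$ has the same problem in disguise: for non-discrete $G$ an element $f\in\ell^1(H)$ is a sum of point masses and does not lie in $L^1(G)$, so the symbols $\|f\|_{F^p(G)}$ and $\|f\|_{F^p_\lambda(G)}$ in your sandwich are undefined, and there is no map $F^p(H)\to F^p(G)$ to compose with $\varphi$. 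Replacing $F^p$ by $M^p$ throughout makes the sandwich valid, but then it merely reproves \autoref{prop:Funct} and you are back to needing isometry of $\widetilde\varphi$ on $M^p(G)\subseteq M(F^p(G))$, which you have not established.

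The missing observation, which the paper uses and which resolves all cases at once, is that the multiplier extension $\widetilde\varphi\colon M(A^p(G))\to\B(X)$ is automatically \emph{isometric}, not merely contractive: since $\varphi$ is isometric and non-degenerate, for $m\in M(A^p(G))$ and $a\in A^p(G)$ one has $\widetilde\varphi(m)\varphi(a)=\varphi(m\cdot a)$, whence
\[
\|\widetilde\varphi(m)\|\ \ge\ \sup_{\|a\|\le 1}\|\varphi(m\cdot a)\|\ =\ \sup_{\|a\|\le 1}\|m\cdot a\|\ =\ \|m\|.
\]
With this in hand your $\psi=\widetilde\varphi\circ\iota$ is isometric in every case, and the elaborate case split and the sandwich argument become unnecessary. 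The paper also first reduces to $A^p(G)\in\{F^p(G),F^p_\lambda(G)\}$ (using that one of these sits isometrically inside each of the six algebras), which sidesteps your unproved inclusion $M^p_\lambda(G)\subseteq PM_p(G)$.
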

\begin{proof}
Choose a non-trivial element $g\in G$, and denote by $H$ the (not necessarily closed) cyclic subgroup of $G$
generated by $g$. Since either $F^p(G)$ or $F^p_\lambda(G)$ is isometrically a subalgebra of $A^p(G)$ (see \autoref{rem:InclMpG}),
we may assume, without loss of generality, that $A^p(G)$ is either $F^p_\lambda(G)$ or $F^p(G)$. Denote
by $B^p(G)$ either $M^p_\lambda(G)$ (if $A^p(G)=F^p_\lambda(G)$) or $M^p(G)$ (if $A^p(G)=F^p(G)$).
By \autoref{prop:Funct}, and since $H$ is amenable and discrete, the inclusion of $H\subseteq G$
induces a canonical isometric embedding $F^p_\lambda(H)\hookrightarrow B^p(G)$.

Let $X$ be a Banach space and let $\varphi\colon A^p(G)\to \B(X)$ be an isometric and non-degenerate representation.
Note that $A^p(G)$ has a contractive approximate
identity, since so does $L^1(G)$ and there is a contractive homomorphism $L^1(G)\to A^p(G)$ with dense image.
By \autoref{thm:multipliers}, $\varphi$ can be extended to an
isometric unital representation $\psi\colon M(A^p(G))\to \B(X)$. Since there are inclusions
\[F^p_\lambda(H)\subseteq B^p(G) \subseteq M(A^p(G)),\]
the restriction of $\psi$ to $F^p_\lambda(H)$ is the desired isometric unital representation of $F^p_\lambda(H)$ on $X$.
\end{proof}

The next theorem will be needed in the proof of
\autoref{thm:FplambdaGLq}. 
Recall that the algebra $PM_p(G)$ of $p$-pseudomeasures is the closure of $F^p_\lambda(G)$ in $\B(L^p(G))$
with respect to the weak$^*$ topology (also called the \emph{ultraweak} topology)
induced by the (canonical) identification of $\B(L^p(G))$ with the dual of $L^p(G)\widehat{\otimes} L^{p'}(G)$ given by the pairing
determined on simple tensors by
\[\langle a,\xi\otimes\eta\rangle_{\B(L^p(G)),L^p(G)\widehat{\otimes} L^{p'}(G)}=\langle a\xi,\eta\rangle_{L^p(G),L^{p'}(G)}\]
for all $a\in \B(L^p(G))$, all $\xi\in L^p(G)$ and all $\eta\in L^{p'}(G)$.

We denote by $G^{\mathrm{op}}$ the opposite group of $G$. With $\mu$ and $\nu$ as in \autoref{subs:Not}, the inversion map 
$\iota\colon (G,\mu)\to (G^{\mathrm{op}},\nu)$ is in fact a measure-preserving group isomorphism. In particular, the 
$p$-convolution algebras associated to $G$ are canonically isometrically isomorphic to those associated to $G^{\mathrm{op}}$.

\begin{thm}\label{thm:duality}
Let $G$ be a locally compact group, and let $p\in (1,\I)$. Then there are canonical isometric isomorphisms
\[F^p(G)\cong F^{p'}(G), \ M^p(G)\cong M^{p'}(G), \  F^p_\lambda(G)\cong F^{p'}_\lambda(G),\]
\[M^p_\lambda(G) \cong M^{p'}_\lambda(G), \ PM_p(G)\cong PM_{p'}(G) \ \mbox{ and } \ CV_p(G)\cong CV_{p'}(G). \]
\end{thm}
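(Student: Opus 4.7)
The plan is to derive all six isomorphisms from one uniform construction: composition of the transpose operation on bounded operators with the inversion/anti-involution on $L^1(G)$ and $M^1(G)$. Recall that for any $L^p$-space $E$, the map $T\mapsto T'$ is a weak$^*$-continuous isometric anti-isomorphism $\B(E)\to \B(E')$, and $E'$ is canonically an $L^{p'}$-space. Since the map $\sharp \colon L^1(G)\to L^1(G)$ (and its analog $\mu\mapsto \check{\mu}$ on $M^1(G)$, which is well-defined because inversion is a measure-preserving isomorphism $(G,\mu)\to (G^{\mathrm{op}},\nu)$) is an isometric \emph{anti}-multiplicative involution, composing transpose with $\sharp$ yields an \emph{isometric isomorphism}. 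This is exactly the mechanism already used in \autoref{prop:dualityFullRed} and in the proof of \autoref{prop:FpGReprLq}(2).

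For the full algebras, $F^p(G)\cong F^{p'}(G)$ is precisely \autoref{prop:dualityFullRed}. The same argument proves $M^p(G)\cong M^{p'}(G)$: if $\pi \colon M^1(G)\to \B(E)$ is a contractive representation on an $L^{p'}$-space, then $\mu\mapsto \pi(\check{\mu})'$ is a contractive representation of $M^1(G)$ on the $L^p$-space $E'$ with the same norm on simple elements. By the universal definitions of $M^p$ and $M^{p'}$, this shows the two norms on $M^1(G)$ coincide after passing through $\sharp$, yielding the isometric isomorphism.

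For the reduced algebras, the central computation is that the transpose of the left regular representation on $L^p(G)$ is, up to an isometric identification, a left regular representation on $L^{p'}(G)$. Concretely, let $U\colon L^{p'}(G)\to L^{p'}(G)$ be the isometry given (via the identities in \autoref{subs:Not}) by $(U\xi)(s)=\Delta(s)^{-1/p'}\xi(s^{-1})$; one checks by a direct calculation that
\[
U\,\lambda_{p'}(f^{\sharp})\,U^{-1} \;=\; \lambda_{p}(f)'
\]
for $f\in L^1(G)$. Hence the assignment $\lambda_p(f)\mapsto \lambda_{p'}(f^{\sharp})$ extends to an isometric isomorphism $F^p_\lambda(G)\cong F^{p'}_\lambda(G)$. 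The same formula, applied to the extended regular representation of $M^1(G)$, gives $M^p_\lambda(G)\cong M^{p'}_\lambda(G)$.

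For pseudomeasures and convolvers, we extend the isomorphism $\Phi\colon \B(L^p(G))\to \B(L^{p'}(G))$, $T\mapsto U^{-1}T'U$, which is a weak$^*$-continuous isometric anti-isomorphism. Combined with $\sharp$ at the $L^1$-level, the previous paragraph shows $\Phi$ sends $\lambda_p(L^1(G))$ onto $\lambda_{p'}(L^1(G))$. Since $\Phi$ is weak$^*$-continuous and its inverse is too, it carries the weak$^*$-closure $PM_p(G)$ of $F^p_\lambda(G)$ onto $PM_{p'}(G)$, establishing the fifth isomorphism. For $CV_p(G)$, recall it is the commutant of the right regular representation, equivalently the bicommutant of $F^p_\lambda(G)$ in $\B(L^p(G))$; since $\Phi$ is an (anti-)isomorphism of algebras, it carries bicommutants to bicommutants, so $\Phi(CV_p(G))=CV_{p'}(G)$. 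The main obstacle is verifying the intertwining identity $U\lambda_{p'}(f^{\sharp})U^{-1}=\lambda_p(f)'$; this is a direct but delicate change-of-variables computation using the four equivalent Haar-integration identities listed in \autoref{subs:Not}, and everything else is a formal consequence.
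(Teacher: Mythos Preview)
Your strategy is essentially the paper's: use the transpose $\B(L^p)\to\B(L^{p'})$ together with the anti-involution $\sharp$ on $L^1(G)$ (and its analog on $M^1(G)$), then invoke weak$^*$-continuity for $PM_p$ and preservation of bicommutants for $CV_p$. The paper packages the reduced cases slightly differently, passing through $G^{\mathrm{op}}$ via the measure-preserving group isomorphism $\iota\colon (G,\mu)\to (G^{\mathrm{op}},\nu)$ and showing a commuting square for $\lambda_p^G$ and $\lambda_{p'}^{G^{\mathrm{op}}}$, but the content is the same.

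One concrete point to fix: your intertwining identity $U\,\lambda_{p'}(f^{\sharp})\,U^{-1}=\lambda_p(f)'$ is not correct as stated, and in fact $U$ is unnecessary. A direct computation with the standard pairing $\langle\xi,\eta\rangle=\int_G\xi\eta\,d\mu$ gives
\[
(\lambda_p(f)'\eta)(u)=\int_G f(t)\,\eta(tu)\,d\mu(t)=\int_G \Delta(t^{-1})f(t^{-1})\,\eta(t^{-1}u)\,d\mu(t)=(\lambda_{p'}(f^{\sharp})\eta)(u),
\]
so already $\lambda_p(f)'=\lambda_{p'}(f^{\sharp})$ on $L^{p'}(G)$, with no conjugation needed (this is exactly what the paper cites from Proposition~3.5 of~\cite{GarThi_GpsLp}). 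Your $U$, being essentially inversion, intertwines the \emph{left} regular representation with a \emph{right} one, which is why the conjugated expression you wrote does not match $\lambda_p(f)'$. Once you replace your displayed identity by the direct one above, the rest of your argument (transpose is a weak$^*$-homeomorphic anti-isomorphism, hence carries weak$^*$-closures to weak$^*$-closures and bicommutants to bicommutants) goes through verbatim and coincides with the paper's proof.
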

\begin{proof}
For $F^p(G)$, this is Proposition~2.18 in~\cite{GarThi_GpsLp} (here recalled as \autoref{prop:dualityFullRed}),
while the proof for $M^p(G)$ is analogous, using $M^1(G)$ instead of $L^1(G)$.

For the reduced versions, we will show that there are canonical isometric isomorphisms $F^p(G)\cong F^{p'}(G^{\mathrm{op}})$, 
$M^p_\lambda(G) \cong M^{p'}_\lambda(G^{\mathrm{op}})$, $\ PM_p(G)\cong PM_{p'}(G^{\mathrm{op}})$ and $CV_p(G)\cong CV_{p'}(G^{\mathrm{op}})$,
since, by the remarks preceding this lemma, this implies the result.

From now on, and to minimize confusion, we will write $(G,\mu)$ and $(G^{\mathrm{op}},\nu)$ instead of $G$ and $G^{\mathrm{op}}$, 
to state explicitly which is the left Haar measure in each case. 

Define an isometric anti-isomorphism $\phi\colon L^1(G,\mu)\to L^1(G^{\mathrm{op}},\nu)$ by $\phi(f)(s)=\Delta(s)f(s)$ for all 
$f\in L^1(G,\mu)$ and all $s\in G$. Observe that the assignment $\xi\mapsto \xi\circ\iota$ defines an isometric isomorphism 
$L^{p'}(G,\mu)\cong L^{p'}(G^{\mathrm{op}},\nu)$.
We denote by $\psi\colon \B(L^p(G,\mu))\to \B(L^{p'}(G^{\mathrm{op}},\nu))$ the map given by
\[\psi(a)(\xi)(s)=[a'(\xi\circ\iota)](s^{-1})\]
for all $a\in \B(L^p(G,\mu))$, for all $\xi\in L^{p'}(G^{\mathrm{op}},\mu)$ and for all $s\in G$. (In other words, $\psi(a)$ is the transpose
of $a$, once $L^{p'}(G,\mu)$ is identified with $L^{p'}(G^{\mathrm{op}},\nu)$ via the assignment $\xi\mapsto \xi\circ\iota$.) It is easy to check that
$\psi$ is an isometric anti-isomorphism. 

We claim that the following diagram commutes:
\beqa
\xymatrix{
L^1(G,\mu)\ar[rr]^-{\phi}\ar[d]_-{\lambda^G_p} && L^1(G^{\mathrm{op}},\nu)\ar[d]^-{\lambda^{G^{\mathrm{op}}}_{p'}}\\
\B(L^p(G,\mu)) \ar[rr]_-\psi && \B(L^{p'}(G^{\mathrm{op}},\nu)).}
\eeqa

Let $f\in L^1(G)$, let $\xi\in L^{p'}(G^{\mathrm{op}},\nu)$ and let $s\in G$ be given. Recall (see, for example, Proposition~3.5 in~\cite{GarThi_GpsLp})
that the transpose of $\lambda_p^G(f)$ is $\lambda_{p'}^G(\phi(f)\circ\iota)$. 
(The function $\phi(f)\circ\iota\in L^1(G,\mu)$ is denoted by $f^\sharp$ in~\cite{GarThi_GpsLp}.)
Using this at the second step, we get
\begin{align*}
\psi(\lambda^G_p(f))(\xi)(s)&=[\lambda_p^G(f)'(\xi\circ\iota)](s^{-1})\\
&= [(\phi(f)\circ\iota)\ast(\xi\circ\iota)](s^{-1})\\
&= \int_G \Delta(t^{-1})f(t^{-1})\xi(st)\ d\mu(t)\\
&=\int_G f(t)\xi(st^{-1})\ d\mu(t).\end{align*}
On the other hand,
\begin{align*}
\lambda^{G^{\mathrm{op}}}_{p'}(\phi(f))(\xi)(s)&=(\phi(f)\ast_{\mathrm{op}}\xi)(s)\\
&=\int_G \Delta(t)f(t)\xi(st^{-1})\ d\nu(t)\\
&=\int_Gf(t)\xi(st^{-1})\ d\mu(t),
\end{align*}
which proves the claim.

It follows that there is a canonical isometric isomorphism $F^p_\lambda(G)\cong F^{p'}_\lambda(G^{\mathrm{op}})$, implemented
by $\psi$. By taking double
commutants, we deduce that $\psi$ also implements an isomorphism between $CV_p(G)$ and $CV_{p'}(G)$.
Extending the above diagram to $M^1(G,\mu)$ and $M^1(G^{\mathrm{op}},\nu)$, we
conclude that there is also a canonical isometric isomorphism $M^p_\lambda(G)\cong M^{p'}_\lambda(G^{\mathrm{op}})$.

To show the result for the algebras of pseudomeasures, we need to identify the ultraweak topologies on $\B(L^p(G,\mu))$ and
$\B(L^{p'}(G^{\mathrm{op}},\nu))$. In order to do this, observe that the canonical isometric identification
\[L^p(G,\mu)\widehat{\otimes}L^{p'}(G,\mu)\cong L^{p'}(G,\mu)\widehat{\otimes}L^p(G,\mu)\] induces the isometric isomorphism
$\psi\colon \B(L^p(G,\mu))\to \B(L^{p'}(G,\mu))$. Commutativity of the above diagram then implies that $\phi$ extends to an
isometric isomorphism between the ultraweak closures of $L^1(G,\mu)$ and $L^1(G^{\mathrm{op}},\nu)$, that is, to an isometric isomorphism
$PM_p(G)\cong PM_{p'}(G^{\mathrm{op}})$.
\end{proof}

The folowing is one of the main results of this paper. It determines precisely when one of the $p$-convolution algebras considered in the
literature can be represented on an $L^q$-space, for some $q\in [1,\I)$. It can be interpreted as stating that the $L^p$- and $L^q$-representation theories
of a nontrivial group, are incomparable whenever they are not ``obviously'' equivalent. We point out that this represents a
significant generalization of Theorem~2.2 in~\cite{NeuRun_column}, where the authors only deal with the case $q=2$, and moreover assume
that $G$ is amenable and has a closed infinite abelian subgroup.

The proof of \autoref{thm:FplambdaGLq} uses machinery from a number of other works. The reader is referred to the third page in
the introduction for an overview of the proof.

\begin{thm}
\label{thm:FplambdaGLq}
Let $G$ be a locally compact group, and let $p,q\in [1,\I)$ with $q>1$.
Then one (or all) of $F^p(G)$, $M^p(G)$, $F^p_\lambda(G)$, $M^p_\lambda(G)$, $PM_p(G)$ or $CV_p(G)$ is isometrically
representable on an $L^q$-space if and only if one of the following holds:
\be\item $G$ is the trivial group;
\item we have $\left|\frac{1}{2}-\frac{1}{p}\right|=\left|\frac{1}{2}-\frac{1}{q}\right|$; or
\item $p=2$ and $G$ is abelian.\ee
\end{thm}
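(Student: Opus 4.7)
The plan is to dispatch the ``if'' direction briefly (trivial cases, \autoref{thm:duality} for H\"older-conjugate exponents, and $F^2(G)\cong C_0(\widehat{G})$ for abelian $G$, which embeds isometrically in $L^q(\widehat{G})$ via multiplication operators), and then execute the six-step outline from the introduction for the ``only if'' direction. For the latter I would first separate on whether $p=2$.

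If $p=2$, the algebra $A^p(G)$ is one of the standard full or reduced group \ca s. For $q=2$ we are already in condition (2); for $q\neq 2$, \autoref{thm:C*LpComm} forces the algebra to be commutative, so $G$ is abelian and condition (3) holds.

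If $p\neq 2$, the reductions go as follows. Because $A^p(G)$ carries a contractive approximate identity (inherited from $L^1(G)$) and $E$ is reflexive ($q>1$), \autoref{thm:ProjEssSubsp} together with Tzafriri's theorem (contractive projections of $L^q$-spaces are again $L^q$-spaces) lets me pass to the essential subspace and assume the representation is non-degenerate. Then \autoref{lma:SubgpH} produces a nontrivial cyclic subgroup $H\leq G$ and a unital isometric representation $\psi\colon F^p_\lambda(H)\to\B(E)$. In the case $q=2$, $F^p_\lambda(H)$ would then be a nontrivial operator algebra with $p\neq 2$, contradicting \autoref{thm:FplambdaG}; so I may assume $q\neq 2$, and I split on whether $H$ is infinite or finite cyclic.

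If $H\cong\Z$, the image $\psi(F^p_\lambda(\Z))$ is a closed subalgebra of $\B(E)$ generated by the invertible isometry $\psi(u)$ (with $u$ the canonical generator), and for $p\neq 2$ its Gelfand transform is not surjective onto $C(S^1)$. Invoking the uniqueness of $F^q_\lambda(\Z)$ as the $L^q$-operator algebra generated by such an isometry (from \cite{GarThi_BanAlg}) yields $\psi(F^p_\lambda(\Z))\cong F^q_\lambda(\Z)$ isometrically, hence $F^p(\Z)\cong F^q(\Z)$, and \autoref{thm: FpZ not isom} then gives $|1/p-1/2|=|1/q-1/2|$, i.e.\ condition (2). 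If instead $H\cong\Z_n$ with $n\geq 2$, I would use \autoref{thm:duality} to reduce to $p,q\in[1,2]$ and, by symmetry, to $p\leq q$. The restriction $\psi|_{L^1(\Z_n)}$ is a contractive $L^q$-representation, so by the universal property of $F^q$ it extends to a contractive $\tilde{\psi}\colon F^q(\Z_n)\to\B(E)$ with $\tilde{\psi}\circ\gamma_{p,q}=\psi$. Chaining the inequalities $\|f\|_{F^p}=\|\psi(f)\|\leq\|\gamma_{p,q}(f)\|_{F^q}\leq\|f\|_{F^p}$ shows that $\gamma_{p,q}$ is an isometric isomorphism. The spectral configuration analysis of invertible isometries in $F^p_\lambda(\Z_n)$ from \cite{GarThi_BanAlg} then converts this into the equality $|1/p-1/2|=|1/q-1/2|$, condition (2); for $n=2$ this last step is exactly \autoref{prop: FpZ2 not isom}.

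The main obstacle is the finite cyclic subcase with $n\geq 3$: turning the isometric isomorphism $\gamma_{p,q}\colon F^p(\Z_n)\to F^q(\Z_n)$ into the numerical constraint $|1/p-1/2|=|1/q-1/2|$ relies on the delicate spectral configuration classification from \cite{GarThi_BanAlg}, since the explicit Riesz-Thorin computation used for $\Z_2$ in \autoref{prop: FpZ2 not isom} does not generalize. A second subtle step is the reduction to non-degenerate representations, which is precisely the content of \cite{GarThi_ExtBid}.
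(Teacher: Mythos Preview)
Your outline tracks the paper closely through the reductions (non-degeneracy via \autoref{thm:ProjEssSubsp} and Tzafriri, passage to a cyclic subgroup via \autoref{lma:SubgpH}, the $p=2$ and $q=2$ subcases) and through the case $H\cong\Z$. The genuine gap is in the finite cyclic case $H\cong\Z_n$, and it is precisely your ``by symmetry, to $p\leq q$'' step.

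At that point you only have an isometric representation $\psi\colon F^p_\lambda(\Z_n)\to\B(E)$ with $E$ an $L^q$-space; the roles of $p$ and $q$ are \emph{not} interchangeable. When $p\leq q\leq 2$ your universality chain $\|f\|_{F^p}=\|\psi(f)\|\leq\|f\|_{F^q}\leq\|f\|_{F^p}$ does force $\gamma_{p,q}$ to be isometric (this is essentially \autoref{prop:FpGReprLq}). But when $1\leq q<p\leq 2$ the same chain only yields $\|f\|_{F^p}\leq\|f\|_{F^q}$, which is the inequality already provided by \autoref{thm:gammapq} and gives nothing new. No duality move rescues this: replacing $(p,q)$ by $(p',q')$ lands both exponents above $2$ with their order preserved. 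The paper itself notes (just after the proof outline in the introduction) that spectral configurations can be avoided only in the case $\left|\tfrac{1}{p}-\tfrac12\right|>\left|\tfrac{1}{q}-\tfrac12\right|$, i.e.\ exactly when $p<q$ after the reduction to $[1,2]$.

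The paper therefore runs the two ingredients in the \emph{opposite} order from yours. First, spectral configurations from \cite{GarThi_BanAlg} are applied to the invertible isometry $\psi(v)\in\B(E)$ (whose spectrum is the $r$-th roots of unity, $r$ prime) to obtain a \emph{canonical} isometric isomorphism $\psi(F^p_\lambda(\Z_r))\cong F^q_\lambda(\Z_r)$, valid regardless of which of $p,q$ is larger. Only then, with an equality of norms on $\ell^1(\Z_r)$ in hand, is the assumption $p<q$ legitimate. The endgame is also not what you propose: from $\|\lambda_p(f)\|=\|\lambda_q(f)\|$ one uses Riesz--Thorin interpolation between $p$ and $2$ to bootstrap to $\|\lambda_p(f)\|=\|\lambda_2(f)\|$, so $F^p(\Z_r)$ is an operator algebra and \autoref{thm:FplambdaG} furnishes the contradiction. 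Spectral configurations play no role at this last step; indeed, the description of invertible isometries in $F^p_\lambda(\Z_n)$ from \cite{GarThi_BanAlg} is the same for every $p\neq 2$, so it cannot by itself separate two exponents in $[1,2)$.
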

\begin{proof}
We begin with the ``if" implication. When $G$ is the trivial group, all the Banach algebras in the statement are $\C$,
which is clearly representable on an $L^q$-space for any $q\in [1,\I)$. On the other hand, the identity
$\left|\frac{1}{2}-\frac{1}{p}\right|=\left|\frac{1}{2}-\frac{1}{q}\right|$ is equivalent to $p$ and $q$ being
either equal or conjugate. The case $p=q$ is trivial. If $p$ and
$q$ are conjugate, then \autoref{thm:duality} shows that all of the algebras in the statement are (canonically) representable on $L^q(G)$.
Finally, if $p=2$ and $G$ is abelian, then all of the Banach algebras in the statement are commutative \ca s, and hence have the
form $C_0(X)$ for some locally compact Hausdorff space $X$. It is then an easy exercise to check that for any $q\in [1,\I)$ and
for any such space $X$, there exist an $L^q$-space and an isometric representation of $C_0(X)$ on it.

We turn to the ``only if" implication. Since there are isometric inclusions
\[F^p_\lambda(G)\subseteq M^p_\lambda(G)\subseteq CV_p(G), \ F^p_\lambda(G)\subseteq PM_p(G)\subseteq CV_p(G) \ \mbox{ and } \ F^p(G)\subseteq M^p(G)\]
(see \autoref{rem:InclMpG}), we may assume that either $F^p_\lambda(G)$ or $F^p(G)$ is representable on an $L^q$-space.
By \autoref{prop:dualityFullRed}, and without loss of generality, we can assume that $p,q\in [1,2]$. Suppose that $p=2$ and $p\neq q$. Then
$F^p_\lambda(G)=C^*_\lambda(G)$ and $F^p(G)=C^*(G)$ are \ca s. By \autoref{thm:C*LpComm}, these must be abelian
\ca s, whence $G$ itself must be abelian.

Since the arguments for $F^p_\lambda(G)$ and $F^p(G)$ are very similar, we will carry them out together until we
have to distinguish the two cases. We therefore denote by $A^p(G)$ either $F^p_\lambda(G)$ or $F^p(G)$.

Now suppose that $p\neq 2$. Then $q\neq 2$ by \autoref{thm:FplambdaG}.
Let $E$ be an $L^q$-space and let
$\varphi\colon A^p(G)\to \B(E)$ be an isometric representation. Note that $A^p(G)$ has a contractive approximate
identity, since so does $L^1(G)$ and there is a contractive homomorphism $L^1(G)\to A^p(G)$ with dense image.
By \autoref{thm:ProjEssSubsp}, there exists a contractive idempotent $e\in \B(E)$ such that $e(E)$ is the essential subspace $E_0$ of $\varphi$.
Then $E_0$ is an $L^q$-space by Theorem~6 in~\cite{Tzafriri}. Denote by $\varphi_0\colon A^p(G)\to \B(E_0)$ the restriction
of $\varphi$. Then $\varphi_0$ is a non-degenerate isometric representation. By \autoref{lma:SubgpH}, there exist a cyclic
subgroup $H$ of $G$, and an isometric unital representation $\psi\colon F^p_\lambda(H)\to \B(E_0)$.
We claim that $\psi(F^p_\lambda(H))$ is isometrically isomorphic to $F^q_\lambda(H)$. We divide the proof into two cases.

\vspace{0.3cm}

\underline{Case 1:} $H\cong \Z$. Observe that $\psi(F^p_\lambda(\Z))$ and $F^q_\lambda(\Z)$
are both Banach algebras generated by an invertible isometry of an $L^q$-space and its inverse. Since $q\neq 2$, by part~(2) of Corollary~5.21
in~\cite{GarThi_BanAlg}, $F^q_\lambda(\Z)$ is the unique, up to (isometric) isomorphism, Banach algebra generated
by an invertible isometry of an $L^q$-space and its inverse whose Gelfand transform is \emph{not} an isomorphism (isometric or not).
Since $p\neq 2$, the Gelfand transform of $F^p_\lambda(\Z)$ is not an isomorphism, so the same is true for $\psi(F^p_\lambda(\Z))$.
It thus follows that there is an isometric isomorphism $\psi(F^p_\lambda(H))\cong F^q_\lambda(H)$, as desired. This proves the first case.

\vspace{0.3cm}

\underline{Case 2:} $H\cong \Z_r$ for $r\geq 2$. By replacing $H$ with a subgroup, we may assume
that $r$ is prime. Denote by $v\in F^p_\lambda(\Z_r)$ the canonical invertible
isometry generating $F^p_\lambda(\Z_r)$, and set $w=\psi(v)$, which is an invertible isometry of $E_0$. Then
\[\spec(w)=\spec(v)=\{\zeta\in S^1\colon \zeta^r=1\}.\]
Thus, the only admissible spectral configurations $\sigma=(\sigma_n)_{n\in\N}$ for $w$ are
\bi
\item $\sigma_0=\spec(w)$ and $\sigma_n=\emptyset$ for $n\geq 1$; or
\item $\sigma_0=\spec(w)=\sigma_r$ and $\sigma_n=\emptyset$ for $n\geq 1$ with $n\neq r$.\ei

In the first case, there is an isometric isomorphism between $F^q(w,w^{-1})=\psi(F^p_\lambda(\Z_r))$ and $(\C^r,\|\cdot\|_\I)$.
This would then contradict part~(5) of Theorem~3.5 in~\cite{GarThi_BanAlg}, since $p\neq 2$. In the second case, by the definition of the norm $\|\cdot\|_{\sigma,q}$ on
$F^q(\sigma)\cong F^q(w,w^{-1})$, there is an isometric isomorphism $F^q_\lambda(\Z_r)\to \psi (F^p_\lambda(\Z_r))$. This
finishes the proof of the claim. For later use, we stress the fact that, in the case $H=\Z_r$,
the isomorphism $\psi(F^p_\lambda(H))\cong F^q_\lambda(H)$
is in fact canonical, in the sense that the spectral configurations of $v$ and $\psi(v)$ agree, and
thus the isomorphism is induced by the identity on $\ell^1(H)$.

We deduce from the claim that there is an isometric isomorphism $F^p_\lambda(H)\cong F^q_\lambda(H)$. For $H\cong \Z$, \autoref{thm: FpZ not isom}
implies that $p=q$. For $H\cong \Z_r$, we use the fact that the isomorphism can be chosen to be canonical to prove that $p$ must
equal $q$.

So suppose that $p\neq q$ and $F^p_\lambda(\Z_r)\cong F^q_\lambda(\Z_r)$ canonically. Without loss of generality, we may assume that
$1\leq p<q <2$.
We claim that the map $\gamma_{p,2}\colon F_\lambda^p(\Z_r)\to C_\lambda^*(\Z_r)$ from \autoref{thm:gammapq}
is an isometric isomorphism. In view of Theorem~3.7 in \cite{GarThi_GpsLp},
this is equivalent to showing that
\[\|\lambda_p(f)\|_{\B(\ell^p(\Z_r))}=\|\lambda_2(f)\|_{\B(\ell^2(\Z_r))}\]
for all $f\in \ell^1(\Z_r)$.

Let $f\in \ell^1(\Z_r)$. Then $\|\lambda_2(f)\|_{\B(\ell^2(\Z_r))}\leq\|\lambda_p(f)\|_{\B(\ell^p(\Z_r))}$ by
\autoref{thm:gammapq}. Let $\theta\in (0,1)$ satisfy $\frac{1}{q}=\frac{\theta}{p}+\frac{1-\theta}{2}$.
By the Riesz-Thorin interpolation theorem, we have
\[\|\lambda_q(f)\|_{\B(\ell^q(\Z_r))}\leq \|\lambda_2(f)\|^\theta_{\B(\ell^2(\Z_r))}\|\lambda_p(f)\|_{\B(\ell^p(\Z_r))}^{1-\theta}.\]
Since $\|\lambda_p(f)\|_{\B(\ell^p(\Z_r))}=\|\lambda_q(f)\|_{\B(\ell^q(\Z_r))}$, we conclude that
\[\|\lambda_p(f)\|_{\B(\ell^p(\Z_r))}^\theta\leq \|\lambda_2(f)\|_{\B(\ell^2(\Z_r))}^\theta,\]
and hence $\|\lambda_p(f)\|_{\B(\ell^p(\Z_r))}\leq \|\lambda_2(f)\|_{\B(\ell^2(\Z_r))}$, as desired.
This proves the claim.

Since $\gamma_{p,2}$ is an isometric isomorphism, $F^p(\Z_r)$ is an operator algebra.
The result now follows from \autoref{thm:FplambdaG}.
\end{proof}

\begin{cor}\label{cor:FpGFqG}
Let $G$ be a locally compact group, and let $p,q\in [1,\I)$. Then the following are equivalent:
\be\item There is an isometric isomorphism $F^p(G)\cong F^q(G)$;
\item There is an isometric isomorphism $F_\lambda^p(G)\cong F_\lambda^q(G)$;
\item When $p>1$, there is an isometric isomorphism $PM_p(G)\cong PM_q(G)$;
\item When $p>1$, there is an isometric isomorphism $CV_p(G)\cong CV_q(G)$;
\item $\left|\frac{1}{p}-\frac 12\right|=\left|\frac{1}{q}-\frac 12\right|$.\ee
\end{cor}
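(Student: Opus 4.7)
The implication $(5)\Rightarrow(1)$--$(4)$ follows at once from \autoref{thm:duality}: when $p=q$ the isomorphisms are identities, while if $p\neq q$ then (5) forces $\tfrac{1}{p}+\tfrac{1}{q}=1$ with both exponents $>1$, and \autoref{thm:duality} furnishes canonical isometric isomorphisms in each of the four families.

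For the converse, my plan is to use each of the hypotheses $(1)$--$(4)$ to produce an isometric representation of the $p$-version of the relevant algebra on an $L^q$-space, and then apply \autoref{thm:FplambdaGLq}. Each of $F^q_\lambda(G)$, $PM_q(G)$ and $CV_q(G)$ embeds isometrically into $\B(L^q(G))$ by definition; as for $F^q(G)$, it admits an isometric representation on an $L^q$-space via the standard universal construction, namely by taking an $\ell^q$-direct sum of sufficiently many contractive $L^q$-representations of $L^1(G)$ so as to realize the supremum defining the $F^q$-norm. Composing the isometric isomorphism in any of $(1)$--$(4)$ with the appropriate canonical $L^q$-representation of the $q$-version then produces an isometric representation of the $p$-version on some $L^q$-space, as required.

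At this point \autoref{thm:FplambdaGLq} yields one of three conclusions: (a) $G$ is trivial; (b) $\bigl|\tfrac{1}{p}-\tfrac{1}{2}\bigr|=\bigl|\tfrac{1}{q}-\tfrac{1}{2}\bigr|$; or (c) $p=2$ and $G$ is abelian. Case (b) is precisely (5), and case (a) is the degenerate situation in which all four Banach algebras collapse to $\C$. The remaining case (c) is the main obstacle: here the $p$-version of the algebra is isometrically a commutative $C^*$-algebra (isomorphic to $C_0(\widehat G)$), and so via the assumed isomorphism the $q$-version is also isometrically a commutative $C^*$-algebra, in particular an operator algebra. Invoking \autoref{thm:FplambdaG} then forces $q=2$ whenever $G$ is nontrivial, so $p=q=2$ and (5) holds. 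Thus (5) follows in all non-degenerate cases, completing the proof.
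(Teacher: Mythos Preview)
Your argument is essentially correct and follows the route the paper intends: deduce each of (1)--(4) from (5) via \autoref{thm:duality}, and for the converse feed the assumed isomorphism into \autoref{thm:FplambdaGLq}. Two small points are worth noting.

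First, \autoref{thm:FplambdaGLq} is stated with the hypothesis $q>1$, so your invocation of it does not literally cover the case $q=1$. This is easily repaired by symmetry: if $q=1$ and $p>1$, swap the roles of $p$ and $q$ (the $q$-version is then representable on an $L^p$-space with $p>1$), while $p=q=1$ gives (5) directly. You should make this explicit.

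Second, your handling of case~(c) via \autoref{thm:FplambdaG} is perfectly valid; an equally natural alternative is simply to apply \autoref{thm:FplambdaGLq} a second time with the roles of $p$ and $q$ reversed, which in the non-degenerate situation forces $q=2$ as well. Your observation that the trivial group is genuinely degenerate (all four algebras are $\C$ regardless of $p,q$, so (5) need not hold) is correct and reflects a tacit ``$G$ nontrivial'' assumption in the corollary as stated.
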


In connection with \cite{GarThi_Wendel}, we mention here that for the algebras of pseudofunctions, the result above can be
improved to moreover allow isomorphisms between algebras associated to different groups, as follows: for locally compact groups
$G$ and $H$, and for $p,q\in [1,\I)$ not both equal to 2, there is an isometric isomorphism
\[F^p_\lambda(G)\cong F^q_\lambda(H)\]
if and only if $\left|\frac{1}{p}-\frac 12\right|=\left|\frac{1}{q}-\frac 12\right|$ and $G$ is isomorphic to $H$. (For discrete groups, the same
result holds with $PM_p$ or $CV_p$ instead of $F^p_\lambda$.)

\vspace{0.3cm}

A variant of the techniques used to prove \autoref{thm:FplambdaGLq} can be used to rule out representability on certain $QSL^p$-spaces.
(Recall that a Banach space $E$ is a $QSL^p$-space if it is isometrically isomorphic to a subspace of a quotient of an
$L^p$-space. Group representations on $QSL^p$-spaces are studied, for example, in \cite{NeuRun_column}.)
We are not able to use spectral configurations as in \autoref{thm:FplambdaGLq}, since we do not have a description of the
Banach algebra generated by an invertible isometry of a $QSL^p$-space. Instead, we will use the fact (see
Theorem~3.7 in~\cite{GarThi_GpsLp}) that $F^p_\lambda(\Z_n)$ and $F^p_\lambda(\Z)$ are universal with respect to representations
on $QSL^p$-spaces, together with the maps $\gamma_{p,q}$, to obtain the result.

The theorem below is only stated for $p,q\in [1,2]$, while the other exponents can be handled using duality.

\begin{thm}\label{thm:FplambdaGQSLq}
Let $G$ be a nontrivial locally compact group, and let $p,q\in [1,2]$.
\be
\item
If $q\leq p$, then \emph{all} of $F^p(G)$, $M^p(G)$, $F^p_\lambda(G)$, $M^p_\lambda(G)$, $PM_p(G)$ and $CV_p(G)$ can be isometrically
represented on a $QSL^q$-space.
\item If $p<q$, then \emph{none} of $F^p(G)$, $M^p(G)$, $F^p_\lambda(G)$, $M^p_\lambda(G)$, $PM_p(G)$ and $CV_p(G)$ can be isometrically
represented on a $QSL^q$-space.\ee
\end{thm}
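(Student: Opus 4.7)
The plan is to handle the two parts separately. Part (1) is essentially free once one recalls a classical Banach space embedding, and part (2) follows the blueprint of \autoref{thm:FplambdaGLq} with spectral configurations replaced by the universality of $F^q_\lambda$ over $QSL^q$-representations of amenable groups.

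For part (1), the key input is the classical fact that, for $1\leq q\leq p\leq 2$, every $L^p$-space embeds isometrically into some $L^q$-space (via $q$-stable symmetric random variables, due to Bretagnolle--Dacunha-Castelle and others). Consequently every $L^p$-space is a $QSL^q$-space. The algebras $F^p_\lambda(G),\, M^p_\lambda(G),\, PM_p(G),\, CV_p(G)$ sit isometrically inside $\B(L^p(G))$ by definition, and for $F^p(G),\, M^p(G)$ the universal norm is realized by an $\ell^p$-direct sum of contractive representations on $L^p$-spaces, which is again an $L^p$-space. In every case we obtain an isometric representation on an $L^p$-space, hence on a $QSL^q$-space.

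For part (2), assume $1\leq p<q\leq 2$ and, aiming at a contradiction, suppose one of the listed algebras has an isometric representation on a $QSL^q$-space $E$. The inclusions in \autoref{rem:InclMpG} reduce us to the case $A^p(G)\in\{F^p(G),\,F^p_\lambda(G)\}$. Since $q>1$, the space $E$ is reflexive; \autoref{thm:ProjEssSubsp} produces a contractive idempotent onto the essential subspace $E_0$, which as a subspace of a $QSL^q$-space is itself $QSL^q$. The restriction $\varphi_0\colon A^p(G)\to\B(E_0)$ is non-degenerate and isometric, and \autoref{lma:SubgpH} then yields a nontrivial cyclic subgroup $H\subseteq G$ together with a unital isometric representation $\psi\colon F^p_\lambda(H)\to \B(E_0)$. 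Since $H$ is amenable, Theorem~3.7 in \cite{GarThi_GpsLp} says that the $F^q_\lambda(H)$-norm is universal among contractive representations of $L^1(H)$ on $QSL^q$-spaces, so precomposing $\psi$ with the canonical map $L^1(H)\to F^p_\lambda(H)$ gives $\|f\|_{F^p_\lambda(H)}\leq \|f\|_{F^q_\lambda(H)}$ for every $f\in L^1(H)$. On the other hand, $\gamma_{p,q}$ from \autoref{thm:gammapq} is contractive (since $p\leq q\leq 2$), so the reverse inequality holds, and the identity on $L^1(H)$ extends to an isometric isomorphism $F^p_\lambda(H)\cong F^q_\lambda(H)$.

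Now I mimic the Riesz--Thorin step from the proof of \autoref{thm:FplambdaGLq}: interpolating between $p$ and $2$ at $q$ with $\tfrac{1}{q}=\tfrac{1-\theta}{p}+\tfrac{\theta}{2}$, combined with the just-established equality $\|\lambda_p(f)\|=\|\lambda_q(f)\|$ and the contractivity of $\gamma_{p,2}^H$, forces $\|\lambda_p(f)\|=\|\lambda_2(f)\|$ for all $f\in L^1(H)$. Hence $\gamma_{p,2}^H\colon F^p_\lambda(H)\to C^*_\lambda(H)$ is an isometric isomorphism, so $F^p_\lambda(H)$ is a $C^*$-algebra and, in particular, an operator algebra. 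Since $p<q\leq 2$ implies $p\neq 2$, \autoref{thm:FplambdaG} forces $H$ to be trivial, contradicting the choice of $H$.

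The main technical hurdle, as emphasized in the paper's introduction, is precisely the absence of a suitable analogue of the spectral-configuration machinery from \cite{GarThi_BanAlg} for Banach algebras of operators on $QSL^q$-spaces. The $QSL^q$-universality of the amenable convolution algebras bypasses this: it is what guarantees that the abstract isomorphism $F^p_\lambda(H)\cong F^q_\lambda(H)$ one extracts is actually \emph{canonical} (i.e., induced by the identity on $L^1(H)$), which in turn is exactly what makes the Riesz--Thorin input applicable.
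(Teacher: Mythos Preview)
Your proof is correct and follows essentially the same approach as the paper's. Part~(1) is identical. In part~(2), the only difference is the final step: after establishing the canonical isometric isomorphism $F^p_\lambda(H)\cong F^q_\lambda(H)$, the paper simply observes that this makes $F^p_\lambda(H)$ isometrically representable on an $L^q$-space and invokes \autoref{thm:FplambdaGLq} directly, whereas you redo the Riesz--Thorin interpolation argument from that proof and appeal to the earlier \autoref{thm:FplambdaG} instead. Both routes are valid and rest on the same ideas; the paper's is one line shorter, yours is slightly more self-contained. (A minor remark: your use of \autoref{thm:ProjEssSubsp} and reflexivity is correct but unnecessary here---since closed subspaces of $QSL^q$-spaces are again $QSL^q$, one can pass to the essential subspace directly, as the paper does.)
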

\begin{proof}
If $q\leq p$, then every $L^p$-space is isomorphic to a closed subspace of an $L^q$-space. In particular, every $L^p$-space is
a $QSL^q$-space, and the result is immediate.

Suppose that $p<q$. As in the proof of \autoref{thm:FplambdaGLq}, it is enough to show that $F^p(G)$ and $F^p_\lambda(G)$
are not representable on a $QSL^q$-space. For convenience, we will denote by $A^p(G)$ one of these algebras, and by $B^p(G)$
either $M^p(G)$ or $M^p_\lambda(G)$, depending on which one $A^p(G)$ is denoting.

Let $E$ be a $QSL^q$-space and let $\varphi\colon A^p(G)\to \B(E)$ be an isometric representation. Since a subspace of a
$QSL^q$-space is obviously a $QSL^q$-space, we can assume that $\varphi$ is non-degenerate. By \autoref{lma:SubgpH}, there are
a cyclic subgroup $H$ of $G$ and a unital, isometric representation $\psi\colon F^p_\lambda(H)\to \B(E)$.
Unlike in the proof of \autoref{thm:FplambdaGLq}, we
cannot really conclude that $\psi(F^p_\lambda(H))$ is isomorphic to $F^q_\lambda(H)$ directly. Instead, and using the notation
from \cite{GarThi_GpsLp}, for $f\in \ell^1(H)$, we have (explanations below):
\[\|f\|_{L^p}=\|\lambda_p(f)\|_{\B(L^p(H))}=\|\psi(f)\|_{\B(E)}\leq \|f\|_{QSL^q}=\|f\|_{L^q}\leq \|f\|_{L^p}.\]
The first step is Theorem~3.7 in~\cite{GarThi_GpsLp} for $F^p(H)$ (the group $H$ is amenable); the second one is the fact that
$\psi$ is isometric; the third one is the definition
of the norm $\|\cdot\|_{QSL^q}$; the fourth one is the fact (Theorem~3.7 in~\cite{GarThi_GpsLp}) that $F^p_{\mathrm{QS}}(H)=F^p(H)$;
and the last one is \autoref{thm:gammapq}.

It follows that the map $\gamma_{p,q}^H\colon F^p_\lambda(H)\to F^q_\lambda(H)$ is an isometric isomorphism. In particular, $F^p_\lambda(H)$
is representable on an $L^q$-space, which contradicts \autoref{thm:FplambdaGLq}. The contradiction implies
that $A^p(G)$ cannot be isometrically represented on a $QSL^q$-space, as desired.
\end{proof}

We close this section with a question.
\autoref{thm:FplambdaGLq} asserts that a group \ca\ can be represented on an $L^p$-space, for some $p\neq 2$, if and only if the group is commutative.
More generally, \autoref{thm:C*LpComm} shows that a \ca\ can be represented on an $L^p$-space, for some $p\neq 2$, if
and only if it is commutative. This may well be a particular case of a more general fact ruling out representability of certain
$L^p$-operator algebras on $L^q$-spaces for two different, nonconjugate, H\"older exponents $p$ and $q$. On the other hand, the examples
in \cite{BleLeM_quotients} show that there are operator algebras, which are not \ca s, that can be represented on
$L^p$-spaces for $p\neq 2$. Therefore, we suggest:

\begin{qst} \label{qst}
Let $A$ be a unital Banach algebra, and suppose that the set
\[\{v\in A\colon v \mbox{ is invertible and } \|v\|=\|v^{-1}\|=1\}\]
has dense linear span in $A$. (This guarantees that $A$ is a $C^*$-algebra if it is an operator algebra.)
Let $p,q\in [1,2]$ with $p\neq q$. Suppose that $A$ can be isometrically represented on
an $L^p$-space and on an $L^q$-space. Is $A$ commutative? Does it follow that $A\cong C(X)$ for
some compact Hausdorff space $X$?\end{qst}

\section{An application to crossed products}
Let $X$ be a locally compact Hausdorff space, let $G$ be a locally compact group, and let $\alpha\colon G\to \mathrm{Homeo}(X)$ be a
topological action. For $p\in [1,\I)$, N.~Christopher Phillips defined in \cite{Phi_CPLp} the \emph{full} and
the \emph{reduced $L^p$-operator crossed products}
$F^p(G,X,\alpha)$ and $F^p_\lambda(G,X,\alpha)$ of the topological dynamical system $(G,X,\alpha)$,
generalizing the well established constructions in $C^*$-algebras, which are the case $p=2$.

In Question~8.2 of~\cite{Phi_CPLp}, Phillips asked whether, for a topological dynamical system $(G,X,\alpha)$
and for distinct $p,q\in [1,\I)$, there are any non-zero continuous homomorphisms
\[F^p(G,X,\alpha)\to F^q(G,X,\alpha) \ \ \mbox{ or } \ \ F^p_\lambda(G,X,\alpha)\to F^q_\lambda(G,X,\alpha).\]
While \autoref{thm:gammapq} shows that there may in general exist such homomorphisms (even contractive ones with dense range), in
\autoref{cor:CrossProdsIsom} we show that there is an isometric isomorphism $F^p(G,X,\alpha)\to F^q(G,X,\alpha)$
or $F^p_\lambda(G,X,\alpha)\to F^q_\lambda(G^{\mathrm{op}},X,\alpha)$ if and only if
$\left|\frac 1p - \frac 12\right|=\left|\frac 1q - \frac 12\right|$.

For the convenience of the reader, and since our notation is somewhat different,
we recall below the definitions of the crossed products (for $L^p$-operator algebras other than $C_0(X)$).
Recall that for $p\in [1,\I)$, we say that a Banach algebra
$A$ is an \emph{$L^p$-operator algebra} if it can be isometrically represented on an $L^p$-space.
The group of isometric automorphisms of $A$ is denoted $\Aut(A)$, and is always endowed with the strong topology.

The object we define next is the analog of the group algebra $L^1(G)$. All integrals are taken with respect to a fixed left Haar measure.

\begin{df}
Fix $p\in [1,\I)$.
Let $\alpha\colon G\to \Aut(A)$ be an action of a locally compact group $G$ on a Banach algebra $A$.
Denote by $L^1(G,A,\alpha)$ the Banach algebra completion of the space of continuous compactly supported functions $G\to A$
with respect to the $L^1$-norm, with product given by twisted convolution, that is,
\[(a \ast b)(g)=\int_G a(h)\alpha_h(b(h^{-1}g))\ dh\]
for $a,b\in C_c(G,A,\alpha)\subseteq L^1(G,A,\alpha)$ and $g\in G$.
\end{df}

Next, we define (regular) covariant representations and the associated (reduced) crossed product.

\begin{df}\label{df:LpCP}
Adopt the notation from the previous definition.
A \emph{(contractive) covariant representation} of $(G,A,\alpha)$ on an $L^p$-space $E$ is a pair $(\pi,v)$ consisting of a
nondegenerate contractive homomorphism $\pi\colon A\to \B(E)$ and an isometric group representation $v\colon G\to \B(E)$, satisfying
the covariance condition
\[v_g\pi(a)v_{g^{-1}}=\pi(\alpha_g(a))\]
for all $g\in G$ and for all $a\in A$. Given such a covariant representation, its
\emph{integrated form} is the nondegenerate contractive homomorphism $\pi\rtimes v\colon L^1(G,A,\alpha)\to \B(E)$ given by
\[(\pi\rtimes v)(a)(\xi)=\int_G \pi(a(g))v_g(\xi)\ dg\]
for all $a\in L^1(G,A,\alpha)$ and for all $\xi\in E$.

Given a contractive nondegenerate representation $\pi_0\colon A\to \B(E_0)$ on an $L^p$-space $E_0$, its associated \emph{regular covariant representation} is the
pair $(\pi,\lambda_p^{E_0})$ on $L^p(G)\otimes_p E_0\cong L^p(G,E_0)$ given by
\[\pi(a)(\xi)(g)=\pi_0\left(\alpha_{g^{-1}}(a)\right)(\xi(g)) \ \ \mbox{ and } \ \ (\lambda_p^{E_0})_g(\xi)(h)=\xi(g^{-1}h)\]
for all $a\in A$, for all $\xi\in L^p(G,E_0)$, and for all $g,h\in G$.

Denote by $\mathrm{Rep}_p(G,A,\alpha)$ the class of all contractive covariant representations of $(G,A,\alpha)$ on $L^p$-spaces,
and by $\mathrm{RegRep}_p(G,A,\alpha)$ the subclass of $\mathrm{Rep}_p(G,A,\alpha)$ consisting of the regular covariant representations.
The \emph{full crossed product} $F^p(G,A,\alpha)$ and the \emph{reduced crossed product} $F^p_\lambda(G,A,\alpha)$ are
defined as the completions of $L^1(G,A,\alpha)$ in the following norms:
\[\|a\|_{F^p(G,A,\alpha)}=\sup\{\left\|(\pi\rtimes v)(a)\right\|\colon (\pi,v) \in \mathrm{Rep}_p(G,A,\alpha)\}\]
and
\[\|a\|_{F_\lambda^p(G,A,\alpha)}=\sup\left\{\left\|(\pi\rtimes \lambda_p^{E_0})(a)\right\|\colon (\pi,\lambda_p^{E_0}) \in \mathrm{RegRep}_p(G,A,\alpha)\right\},\]
for all $a\in L^1(G,A,\alpha)$.
\end{df}

\begin{rem} In the context of the above definition, if $A$ is an $L^p$-operator algebra, then so will be $F^p(G,A,\alpha)$ and $F^p_\lambda(G,A,\alpha)$.\end{rem}

$L^p$-operator crossed products generalize group $L^p$-operator algebras, since for the one point space
$X=\{\ast\}$ we have $F^p(G,\ast,\id)=F^p(G)$ and $F^p_\lambda(G,\ast,\id)=F^p_\lambda(G)$.

\begin{rem}
Most results in~\cite{Phi_CPLp} assume
that the algebra $A$ is \emph{separable} and that the group is \emph{second countable}, and conclude that a number of Banach algebras
can be represented on \emph{$\sigma$-finite} $L^p$-spaces; see also Remark~1.18 in~\cite{Phi_CPLp}. (The purpose of using $\sigma$-finite
measure spaces is to apply Lamperti's theorem \cite{Lam_IsomLp}.) However, the theory can be developed without these countability assumptions,
and in fact the arguments in \cite{Phi_CPLp} go through in general, except that one gets an $L^p$-operator algebra that is not necessarily
representable on a $\sigma$-finite $L^p$-space. This, in particular, applies to Remark~4.6 and Proposition~4.8 in~\cite{Phi_CPLp}.\end{rem}

We note that while \autoref{df:LpCP} assumes covariant representations to be nondegenerate,
this assumption is unnecessary whenever $A$ has a (left or right) contractive approximate
identity, as can be shown with essentially the same argument as in \autoref{prop:defagree}. In fact,
our first preparatory result guarantees the existence of an approximate identity for full and reduced crossed
products whenever the underlying algebra has one.

\begin{thm}\label{thm:CPcai}
Let $p\in [1,\I)$, let $A$ be an $L^p$-operator algebra, let $G$ be a locally compact group, and let
$\alpha\colon G\to\Aut(A)$ be an action. Consider the following statements.
\be
\item $A$ has a left (right, two-sided) contractive approximate identity;
\item $F^p(G,A,\alpha)$ has a left (right, two-sided) contractive approximate identity;
\item $F^p_\lambda(G,A,\alpha)$ has a left (right, two-sided) contractive approximate identity. \ee
Then (1) implies (2), and (2) implies (3).

Finally, if $G$ is discrete, then also (3) implies (1), so they are all equivalent in this case.
\end{thm}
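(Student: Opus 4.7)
The plan is to prove the implications $(1)\Rightarrow(2)\Rightarrow(3)$ by direct constructions, and the converse $(3)\Rightarrow(1)$ in the discrete case by building a canonical contractive conditional expectation from $F^p_\lambda(G,A,\alpha)$ onto $A$ and applying it to an assumed approximate identity.

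For $(1)\Rightarrow(2)$, I would take a left contractive approximate identity $(e_i)_{i\in I}$ of $A$ together with the standard contractive approximate identity $(\phi_j)_{j\in J}$ of $L^1(G)$ consisting of non-negative functions of integral one, supported in a shrinking neighborhood base of the identity of $G$, and form the doubly-indexed net $a_{i,j}(g)=\phi_j(g)e_i$ in $L^1(G,A,\alpha)$. Each $a_{i,j}$ has $L^1$-norm at most one, hence is a contraction in $F^p(G,A,\alpha)$ because the canonical map $L^1(G,A,\alpha)\to F^p(G,A,\alpha)$ is contractive. To verify the approximate-identity property on $F^p(G,A,\alpha)$, I would first restrict to $f\in C_c(G,A)$, rewrite
\[
(a_{i,j}\ast f - f)(g)=\int_G \phi_j(h)\bigl[e_i\alpha_h(f(h^{-1}g)) - f(g)\bigr]\,dh,
\]
and bound this in $L^1$-norm using strong continuity of $\alpha$ (to control $\|\alpha_h(f(h^{-1}g))-f(g)\|$ uniformly on the support of $f$ for $h$ in the support of $\phi_j$) together with the approximate-identity property of $(e_i)$. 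The $F^p$-estimate is then immediate from contractivity of the inclusion $L^1\to F^p$, and a standard $\varepsilon/3$ argument based on density of $C_c(G,A)$ in $F^p(G,A,\alpha)$ and the uniform bound $\|a_{i,j}\|\le 1$ extends the conclusion to all of $F^p(G,A,\alpha)$. Right-sided and two-sided versions proceed analogously. The implication $(2)\Rightarrow(3)$ is then immediate: the canonical contractive homomorphism $\kappa\colon F^p(G,A,\alpha)\to F^p_\lambda(G,A,\alpha)$, obtained from the observation that regular covariant representations are in particular contractive covariant representations, has dense image, and any contractive approximate identity $(b_k)$ of $F^p(G,A,\alpha)$ pushes forward to one in $F^p_\lambda(G,A,\alpha)$ via the usual $\varepsilon/3$ approximation.

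The bulk of the work lies in $(3)\Rightarrow(1)$ when $G$ is discrete. My strategy is to construct a canonical contractive conditional expectation $E\colon F^p_\lambda(G,A,\alpha)\to A$. Fix a nondegenerate isometric representation $\pi_0\colon A\to\B(E_0)$ on an $L^p$-space $E_0$, realize $F^p_\lambda(G,A,\alpha)$ inside $\B(\ell^p(G,E_0))$ via the associated regular covariant representation $\pi\rtimes\lambda_p^{E_0}$, and let $P_e\in\B(\ell^p(G,E_0))$ be the coordinate projection onto the $E_0$-summand at $e\in G$. Discreteness of $G$ is crucial here, as it is what makes $P_e$ a contractive idempotent. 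A direct computation from the formulas of \autoref{df:LpCP} shows that, for $a\in A$ and $g\in G$, one has $P_e\,(\pi\rtimes\lambda_p^{E_0})(a\delta_g)\,P_e=\pi_0(a)P_e$ when $g=e$ and vanishes otherwise, and moreover $(\pi\rtimes\lambda_p^{E_0})(a\delta_e)$ commutes with $P_e$. Hence the map $T\mapsto P_eTP_e$ is contractive and sends the dense subalgebra of $F^p_\lambda(G,A,\alpha)$ generated by the elements $(\pi\rtimes\lambda_p^{E_0})(a\delta_g)$ into $\pi_0(A)\cdot P_e$; since $\pi_0(A)$ is closed in $\B(E_0)$ (as the image of the complete space $A$ under an isometric embedding), continuity extends this to a contractive map $E\colon F^p_\lambda(G,A,\alpha)\to A$ satisfying $E(a\delta_e)=a$ and $E(T\cdot (a\delta_e))=E(T)a$ for all $a\in A$. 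Given a contractive left approximate identity $(b_k)$ of $F^p_\lambda(G,A,\alpha)$, applying $E$ to the convergence $b_k\cdot (a\delta_e)\to a\delta_e$ yields $E(b_k)a\to a$ in $A$, so $(E(b_k))$ is a left contractive approximate identity of $A$. Right-sided and two-sided versions go through identically.

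The principal obstacle I anticipate is the construction and well-definedness of the conditional expectation $E$ in $(3)\Rightarrow(1)$: discreteness of $G$ is used essentially, since for non-discrete $G$ the point-mass projection $P_e$ does not exist as an element of $\B(L^p(G,E_0))$ and the above strategy cannot get off the ground.
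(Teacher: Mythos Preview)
Your proposal is correct and follows essentially the same route as the paper. For $(1)\Rightarrow(2)$ both you and the paper build the approximate identity $\phi_j\cdot e_i$ in $L^1(G,A,\alpha)$ from an approximate identity of $L^1(G)$ and one of $A$, verify convergence on a dense set (you use $C_c(G,A)$, the paper uses $L^1(G)\cdot A$), and push forward along the contractive dense-range map into $F^p(G,A,\alpha)$; $(2)\Rightarrow(3)$ is handled identically in both by $\kappa$. For $(3)\Rightarrow(1)$ in the discrete case, the paper simply invokes the contractive conditional expectation $E\colon F^p_\lambda(G,A,\alpha)\to A$ from Proposition~4.8 of~\cite{Phi_CPLp} and applies it to the given approximate identity, exactly as you do; the only difference is that you reconstruct $E$ explicitly via the corner $P_eTP_e$ rather than citing it. One small remark: your phrase ``realize $F^p_\lambda(G,A,\alpha)$ inside $\B(\ell^p(G,E_0))$'' is slightly imprecise, since a single regular representation need not be isometric on the reduced crossed product; but your argument only needs that this representation is \emph{contractive} and that $\pi_0$ is isometric on $A$, both of which hold, so the reasoning goes through unchanged.
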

\begin{proof}
We prove the result for left contractive approximate identities; the proof for right contractive approximate identities is analogous.

(1) implies (2). Let $(a_\mu)_{\mu\in \Lambda}$ be a contractive approximate identity for $A$.
Set
\[\U=\{U\subseteq G \mbox{ open, such that } e\in U \mbox{ and } \overline{U} \mbox{ is compact}\}.\]
Given $U\in \U$, let $f_U\colon G\to [0,\I)$ be a continuous positive function with support contained in $U$ satisfying $\int\limits_G f_U(g)dg=1$.
Define a partial order on $\U$ by setting $U\leq V$ if $V\subseteq U$, and give $\U\times\Lambda$ the partial order given
by $(U,\mu)\leq (V,\nu)$ if and only if  $U\leq V$ and $\mu\leq \nu$. Recall that there exists a canonical contractive homomorphism
$\iota\colon L^1(G,A,\alpha)\to F^p(G,A,\alpha)$ with dense range. Write $\|\cdot\|_1$ for the norm on both $L^1(G)$ and $L^1(G,A,\alpha)$,
and $\|\cdot\|$ for the norm on both $A$ and $F^p(G,A,\alpha)$.

For $(U,\mu)\in \U\times\Lambda$, set $x_{(U,\mu)}=f_Ua_\mu$ (pointwise product), which is an element in $C_c(G,A,\alpha)\subseteq L^1(G,A,\alpha)$. It is
clear that
\[\|\iota(x_{(U,\mu)})\|\leq \|f_Ua_\mu\|_1=\|f_U\|_1\|a_\mu\|\leq 1\]
for all $(U,\mu)\in \U\times\Lambda$. Thus, in order to show that $\left(\iota(x_{(U,\mu)})\right)_{(U,\mu)\in \U\times\Lambda}$ is
a left contractive approximate identity for $F^p(G,A,\alpha)$, it is enough to show that $\left(x_{(U,\mu)}\right)_{(U,\mu)\in \U\times\Lambda}$ is
a left contractive approximate identity for $L^1(G,A,\alpha)$. For this, it is enough to consider functions in $L^1(G)\cdot A\subseteq L^1(G,A,\alpha)$,
since these span a dense subalgebra.

Let $f\in L^1(G)$ and $a\in A$ be given. We claim that $\left\|x_{(U,\mu)}\ast (fa)-fa \right\|\to 0$ as $(U,\mu)\to \infty$.
If either $f$ or $a$ is zero, then so is their product and there is nothing to show.
Without loss of generality, we may assume that $\|f\|_1=\|a\|=1$. Let $\ep>0$. We make the following choices:
\be
\item Using continuity of $\alpha$, choose an open set $U_1\subseteq G$ containing the unit of $G$ such that $\|\alpha_g(a)-a\|<\ep/3$
for all $g\in U_1$. Since $G$ is locally compact, we may assume that $U_1$ has compact closure, so that $U_1\in \U$.
\item Using continuity of the left regular representation on $L^1(G)$, choose an open set $U_2\subseteq G$ containing the unit of $G$ such that
$\int\limits_G\left|f(g^{-1}h)-f(h)\right|dg<\ep/3$ for all $g\in U_2$. Again, we may assume that $U_2\in \U$.
\item Choose $\mu_0\in \Lambda$ such that $\|a_\mu a-a\|<\ep/3$ for all $\mu\geq \mu_0$.
\ee

Set $U_0=U_1\cap U_2\in \U$. Given $(U,\mu)\geq (U_0,\mu_0)$, we have
\begin{align*}
\left\|x_{(U,\mu)}\ast (fa)-fa \right\| &= \int_G\left\|\int_Gx_{(U,\mu)}(g)\alpha_g((fa)(g^{-1}h))dg-f(h)a\right\|dh\\
&= \int_G\left\|\int_Gf_U(g)a_\mu f(g^{-1}h)\alpha_g(a)dg-f(h)a\right\|dh\\
&= \int_G\left\|\int_Gf_U(g)\left(f(g^{-1}h)a_\mu\alpha_g(a)-f(h)a\right)dg\right\|dh\\
&\leq \int_G\int_Gf_U(g)\left\|f(g^{-1}h)a_\mu\alpha_g(a)-f(h)a\right\|dgdh\\
&\leq \int_G\int_Gf_U(g)\left\|f(g^{-1}h)a_\mu\alpha_g(a)-f(h)a_\mu \alpha_g(a)\right\|dgdh\\
& \ \ + \int_G\int_Gf_U(g)\left\|f(h)a_\mu \alpha_g(a)-f(h)a_\mu a\right\|dgdh\\
&\ \ + \int_G\int_Gf_U(g)\left\|f(h)a_\mu a-f(h) a\right\|dgdh\\
&\leq \|a_\mu\|\|\alpha_g(a)\| \int_G\int_U f_U(g)\left|f(g^{-1}h)-f(h)\right|dhdg\\
& \ \ + \|a_\mu\|\|f\|_1 \int_U f_U(g) \|\alpha_g(a)-a\|dg + \|f_U\|_1\|f\|_1\|a_\mu a-a\|\\
&\leq \ep/3+\ep/3 +\ep/3 =\ep.
\end{align*}
We conclude that $F^p(G,A,\alpha)$ has a left contractive approximate identity.

(2) implies (3). This is immediate since there is a contractive homomorphism $\kappa_p\colon F^p(G,A,\alpha)\to F^p_\lambda(G,A,\alpha)$
with dense range; see Lemma~3.13 in~\cite{Phi_CPLp}.

Assume now that $G$ is discrete, and let us prove that (3) implies (1). Denote by $E\colon F^p_\lambda(G,A,\alpha)\to A$ the
faithful conditional expectation constructed in Proposition~4.8 of~\cite{Phi_CPLp} (observe that separability is not necessary; see
comments above).
Let $(x_\mu)_{\mu\in \Lambda}$ be a contractive left approximate identity for $F^p_\lambda(G,A,\alpha)$, and set $y_\mu=E(x_\mu)\in A$ for $\mu\in \Lambda$.
Using Remark~4.6 in~\cite{Phi_CPLp} (again, separability is not needed there),
we identify $A$ with a subalgebra of $F^p_\lambda(G,A,\alpha)$. For $a\in A$, we have
\[\lim_{\mu\in \Lambda} \|y_\mu a -a\|=\lim_{\mu\in \Lambda} \|E(x_\mu) a-a\|=\lim_{\mu\in \Lambda} \|E(x_\mu a-a)\|
\leq \lim_{\mu\in \Lambda} \|x_\mu a-a\|=0.\]
It follows that $(y_\mu)_{\mu\in \Lambda}$ is an left approximate identity for $A$, and it is clear
that $\|y_\mu\|\leq 1$ for all $\mu\in \Lambda$.
\end{proof}

Even when $G$ is discrete, the crossed products $F^p(G,A,\alpha)$ and $F^p_\lambda(G,A,\alpha)$ from the
proposition above may not contain $F^p(G)$ and $F^p_\lambda(G)$ canonically; this happens only when $A$
is unital. In general, however, they are canonically subalgebras of the multiplier algebras of the crossed
products, as we show below. Recall
our convention that the product in multiplier algebras is written with a dot.

\begin{thm} \label{thm:MultAlgCP}
Let $p\in [1,\I)$, let $A$ be an $L^p$-operator algebra with a left
contractive approximate identity, let $G$ be a locally compact group, and let
$\alpha\colon G\to\Aut(A)$ be an action by isometric isomorphisms. For $g\in G$, define a linear map
$L_g\colon L^1(G,A,\alpha)\to L^1(G,A,\alpha)$ by
\[L_g(a)(h) = \alpha_g(a(g^{-1}h))\]
for all $a\in L^1(G,A,\alpha)$ and all $h\in G$. Then the assignment $g\mapsto L_g$ induces natural contractive
homomorphisms
\[\iota^G_L\colon F^p(G)\to M_L(F^p(G,A,\alpha)) \ \mbox{ and } \ \iota^G_{L,\lambda}\colon F^p_\lambda(G)\to M_L(F^p_\lambda(G,A,\alpha)).\]

When $A$ has a right contractive approximate identity, for $g\in G$, the linear maps $R_g\colon L^1(G,A,\alpha)\to L^1(G,A,\alpha)$,
given by $R_g(a)(h)=a(hg)$ for all $a\in L^1(G,A,\alpha)$ and all $h\in H$, define natural contractive homomoprhisms
\[\iota^G_{R}\colon F^p(G)\to M_R(F^p(G,A,\alpha)) \ \mbox{ and } \ \iota^G_{R,\lambda}\colon F^p_\lambda(G)\to M_R(F^p_\lambda(G,A,\alpha)).\]

Finally, when $A$ has a two-sided contractive approximate identity, then the above maps define natural contractive homomorphisms
\[\iota^G\colon F^p(G)\to M(F^p(G,A,\alpha)) \ \mbox{ and } \ \iota^G_{\lambda}\colon F^p_\lambda(G)\to M(F^p_\lambda(G,A,\alpha)).\]

Moreover, the maps $\iota^G_{L,\lambda}, \iota^G_{R,\lambda}$ and $\iota^G_{\lambda}$ are isometric.
\end{thm}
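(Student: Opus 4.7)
The plan is to proceed in three stages: define the multiplier actions on $L^1(G,A,\alpha)$, extend them to the crossed products via universal properties, and establish isometry in the reduced case. First, I verify by direct computation that each $L_g$ is an isometric left multiplier of $L^1(G,A,\alpha)$ — the identity $L_g(a)\ast b = L_g(a\ast b)$ follows from the left-invariance of Haar measure after substituting $k\mapsto gk$ in the convolution integral — and that $g\mapsto L_g$ is a strongly continuous group homomorphism into the isometric left multipliers. Integration against $f\in L^1(G)$ then produces a contractive algebra homomorphism
\[L^1(G)\to M_L(L^1(G,A,\alpha)), \quad f\mapsto L_f:=\int_G f(g)L_g\,dg.\]
An analogous computation (with the right Haar measure and modular function bookkeeping) handles the right multipliers $R_g$, and combining both gives the two-sided case.

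Second, I extend to $F^p(G)$ and $F^p_\lambda(G)$ using universality. For any contractive covariant representation $(\pi,v)$ of $(G,A,\alpha)$ on an $L^p$-space $E$, set $v_*(f):=\int_G f(g)v_g\,dg$, which satisfies $\|v_*(f)\|\leq\|f\|_{F^p(G)}$. A computation using covariance and a change of variables in the Haar integral yields
\[v_*(f)\circ(\pi\rtimes v)(a)=(\pi\rtimes v)(L_f(a)).\]
Taking the supremum over $\Rep_p(G,A,\alpha)$ (respectively, over regular covariant representations) shows that $L_f$ extends to a contractive left multiplier of $F^p(G,A,\alpha)$ (resp.\ $F^p_\lambda(G,A,\alpha)$), producing the contractive homomorphisms $\iota_L^G$ and $\iota_{L,\lambda}^G$. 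For the right-multiplier cases, the analogous identity is $(\pi\rtimes v)(a)\circ w_*(f)=(\pi\rtimes v)(R_f(a))$, where $w_*(f)$ is built from $v$ via the inversion map; the required norm bound follows from \autoref{thm:duality}.

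Third, to establish the isometric property of $\iota_{L,\lambda}^G$, I choose a non-degenerate isometric representation $\pi_0\colon A\to\B(E_0)$ on an $L^p$-space, whose existence follows from \autoref{thm:ProjEssSubsp} combined with Theorem~6 of~\cite{Tzafriri}. The associated regular covariant representation $(\pi,\lambda_p^{E_0})$ on $L^p(G,E_0)\cong L^p(G)\otimes_p E_0$ is non-degenerate and induces an isometric representation of $F^p_\lambda(G,A,\alpha)$. In this representation, $v_*(f)=\lambda_p(f)\otimes_p\id_{E_0}$ has operator norm exactly $\|f\|_{F^p_\lambda(G)}$. Using a contractive approximate identity $(x_\mu)$ for $F^p_\lambda(G,A,\alpha)$ supplied by \autoref{thm:CPcai} together with non-degeneracy of $\pi\rtimes\lambda_p^{E_0}$, we obtain
\[\|f\|_{F^p_\lambda(G)}=\|v_*(f)\|\leq\sup_{\|b\|\leq 1}\|v_*(f)\circ(\pi\rtimes v)(b)\|=\|\iota_{L,\lambda}^G(f)\|,\]
which combined with the contractive bound from the second stage yields equality. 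The arguments for $\iota_{R,\lambda}^G$ and $\iota_\lambda^G$ are analogous.

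The main obstacle is the assertion in the third stage that the regular covariant representation built from an isometric, non-degenerate $\pi_0$ induces an isometric representation of $F^p_\lambda(G,A,\alpha)$ — without this, we can only bound the multiplier norm from below by a quantity depending on $\pi_0$, not by $\|f\|_{F^p_\lambda(G)}$ itself. This ``reduction to an isometric coefficient representation'' is analogous to the classical $C^*$-algebraic argument but must be adapted to $L^p$-operator algebras. A secondary challenge is the careful handling of modular-function factors in the right-multiplier cases, which is where \autoref{thm:duality} enters the argument.
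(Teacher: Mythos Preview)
Your approach is essentially the same as the paper's, and correct, but the ``main obstacle'' you flag in Stage~3 is a phantom. You do not need the regular covariant representation built from an isometric $\pi_0$ to induce an isometric representation of $F^p_\lambda(G,A,\alpha)$: in your displayed chain the final ``$=$'' only needs to be ``$\leq$'', and that much holds for \emph{any} nondegenerate $\pi_0$. Indeed, $v_*(f)\circ(\pi\rtimes v)(b)=(\pi\rtimes v)(\iota^G_{L,\lambda}(f)\cdot b)$, and since $\pi\rtimes v$ is contractive this has norm at most $\|\iota^G_{L,\lambda}(f)\cdot b\|_{F^p_\lambda(G,A,\alpha)}\leq\|\iota^G_{L,\lambda}(f)\|\,\|b\|$. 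Taking the supremum over $\|b\|\leq 1$ and combining with your lower bound via the approximate identity yields $\|f\|_{F^p_\lambda(G)}\leq\|\iota^G_{L,\lambda}(f)\|$, as required.

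The paper's argument is slightly cleaner because it avoids choosing a $\pi_0$ at all. It observes that for \emph{every} regular covariant representation one has $\widetilde{\pi\rtimes\lambda_p^{E_0}}\bigl(\iota^G_{L,\lambda}(f)\bigr)=\lambda_p^{E_0}(f)=\lambda_p(f)\otimes\id_{E_0}$, whose operator norm is exactly $\|\lambda_p(f)\|=\|f\|_{F^p_\lambda(G)}$ regardless of $E_0$. Since the multiplier norm equals the supremum of these over all regular representations (using nondegeneracy and the approximate identity from \autoref{thm:CPcai}), one obtains $\|\iota^G_{L,\lambda}(f)\|=\|f\|_{F^p_\lambda(G)}$ directly, with no need to ask whether any single crossed-product representation is isometric.
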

\begin{proof}
We prove the theorem only for left approximate identities and left
multiplier algebras, but analogous proofs apply to the right and two-sided versions.

We claim that $L_g$ is a left multiplier on $L^1(G,A,\alpha)$, that is, $L_g(a\ast b)=L_g(a)\ast b$
for all $a,b\in L^1(G,A,\alpha)$. Given $a,b\in L^1(G,A,\alpha)$ and
$h\in G$, we have
\begin{align*}
L_g(a \ast b)(h)&=\alpha_g\left(\int_G a(k)\alpha_k\left(b(k^{-1}g^{-1}h)\right)dk\right)\\
&=\int_G \alpha_g(a(k))\alpha_{gk}\left(b((gk)^{-1}h)\right)dk\\
&=\int_G \alpha_g(a(g^{-1}t))\alpha_{t}\left(b(t^{-1}h)\right)dt\\
&= (L_g(a)\ast b)(h),
\end{align*}
as desired. It is clear that $g\mapsto L_g$ defines a strongly continuous group homomorphism $L\colon G\to M_L(L^1(G,A,\alpha))$.

\textbf{Claim 1:} The homomorphism $L$ extends to strongly continuous representations, $\iota^G$ and $\iota^G_\lambda$, of $G$ on
$F^p(G,A,\alpha)$ and $F^p_\lambda(G,A,\alpha)$ by isometric left multipliers.

Let $g\in G$ be given. To show that $L_g$ extends to isometric automorphisms $\iota^G_g$ and $(\iota^G_\lambda)_g$ of $F^p(G,A,\alpha)$
and $F^p_\lambda(G,A,\alpha)$, it is enough to show that for every covariant representation $(\pi,v)$ of $(G,A,\alpha)$ on an $L^p$-space,
one has
\[\|(\pi\rtimes v)(a)\|=\|(\pi\rtimes v)(L_g(a))\|\]
for all $a\in L^1(G,A,\alpha)$. To prove this, let $(\pi,v)$ be a covariant representation on an $L^p$-space $E$, and let $a\in L^1(G,A,\alpha)$. For $\xi\in E$,
we use the covariance identity $\pi(\alpha_g(x))=v_g\pi(x)v_{g^{-1}}$ at the third step, to get
\begin{align*}
(\pi\rtimes v)(L_g(a))(\xi)&=\int_G \pi(L_g(a)(h))v_h(\xi) \ dh\\
&= \int_G \pi(\alpha_g(a(g^{-1}h)))v_h(\xi) \ dh\\
&= \int_G v_g\pi(a(g^{-1}h))v_{g^{-1}h}(\xi) \ dh\\
&= v_g (\pi\rtimes v)(a)(\xi).
\end{align*}
Since $v_g$ is an isometry, it follows that $\|(\pi\rtimes v)(a)\|=\|(\pi\rtimes v)(L_g(a))\|$, as desired.

The fact that $\varphi_g$ and $(\iota^G_{L,\lambda})_g$ are left multipliers follows immediately from the fact that $L_g$ is a left multiplier, using
an $\ep/3$ argument. Finally, it is routine to check that the assignments $g\mapsto (\iota^G_{L})_g$ and
$g\mapsto (\iota^G_{L,\lambda})_g$ are strongly continuous actions of $G$. The claim is proved.

Since $L^1(G)$ is universal with respect to strongly continuous isometric actions of $G$,
there are contractive homomorphisms $\iota^G_L\colon L^1(G)\to M_L(F^p(G,A,\alpha))$ and
$\iota^G_{L,\lambda}\colon L^1(G)\to M_L(F_\lambda^p(G,A,\alpha))$, which admit explicit descriptions as follows. For $f\in L^1(G)$ and
$a\in L^1(G,A,\alpha)$, we have
\[(\iota^G_L(f)\cdot a)(g)=\int_Gf(h)L_h(a)(g)\ dh=\int_Gf(h)\alpha_h(a(h^{-1}g))\ dh,\]
for all $g\in G$. On the other hand, let $\kappa\colon F^p(G,A,\alpha)\to F_\lambda^p(G,A,\alpha)$ denote the canonical contractive
homomorphism with dense range. Since $\kappa$ is the identity on $L^1(G,A,\alpha)$, by \autoref{thm:CPcai} it maps a contractive left approximate identity
of $F^p(G,A,\alpha)$ to a contractive left approximate identity of $F^p_\lambda(G,A,\alpha)$. Thus there exists a unique unital extension
$\widetilde{\kappa}\colon M_L(F^p(G,A,\alpha))\to M_L(F^p_\lambda(G,A,\alpha)),$
and we have $\iota^G_{L,\lambda}=\widetilde{\kappa} \circ \iota^G_L$.

It remains to show that $\iota^G_{L,\lambda}$ is isometric when $L^1(G)$ is endowed with the
norm of $F^p_\lambda(G)$. We begin with some general observations.
Since $F^p(G,A,\alpha)$ and $F^p_\lambda(G,A,\alpha)$ have left contractive approximate identities by \autoref{thm:CPcai},
it follows from \autoref{thm:multipliers} that any contractive, nondegenerate representation of any these algebras extends to a contractive,
unital representation of its left multiplier algebra.
Let $(\pi,v)$ be a covariant representation of $(G,A,\alpha)$ on an $L^p$-space $E$. By applying \autoref{thm:ProjEssSubsp}, we can
assume that $\pi$ (and hence $\pi\rtimes v$) is nondegenerate. We write
$\widetilde{\pi\rtimes v}\colon M_L(F^p(G,A,\alpha))\to \B(E)$
for the extension of $\pi\rtimes v$ to the left multiplier algebra, and similarly with
$\widetilde{\pi\rtimes v}\colon M_L(F^p_\lambda(G,A,\alpha))\to \B(E)$
if $(\pi,v)$ is a regular covariant representation. By a slight abuse of notation, we denote also by $v\colon L^1(G)\to \B(E)$ the
integrated form of $v$, which is given by $v(f)\xi=\int\limits_G f(g)v_g(\xi) dg$ for $f\in L^1(G)$ and $\xi\in E$.

\textbf{Claim 2:} We have $\widetilde{\pi\rtimes v}(\iota_L^G(f))= v(f)$ for all $f\in L^1(G)$.

\noindent Let $f\in L^1(G)$, let $a\in L^1(G,A,\alpha)$, and let $\xi\in E$. Then
\begin{align*}
(\widetilde{\pi\rtimes v})(\iota_L^G(f))\left[(\pi\rtimes v)(a)\xi\right]&=(\pi\rtimes v)[\iota_L^G(f)\cdot a](\xi)\\
&=\int_G \pi\left[(\iota_L^G(f)\cdot a)(g)\right]v_g(\xi) \ dg\\
&=\int_G \pi\left[\int_G f(h)\alpha_h(a(h^{-1}g))\ dh\right]v_g(\xi)\ dg\\
&=\int_G f(h)\int_G\pi\left[\alpha_h(a(h^{-1}g))\right]v_g(\xi)\ dgdh\\
&=\int_G f(h)\int_G\pi\left[\alpha_h(a(k))\right]v_{hk}(\xi)\ dkdh\\
&=\int_G f(h)\int_Gv_h \pi\left[a(k)\right]v_{k}(\xi)\ dkdh\\
&=\int_G f(h)v_h\left((\pi\rtimes v)(a)\xi\right) dh\\
&=v(f)((\pi\rtimes v)(a)\xi),
\end{align*}
and the claim is proved.

Now suppose that $(\pi,v)$ is a regular covariant representation, so that there exists an $L^p$-space $E_0$ such that $v=\lambda^{E_0}_p$; see \autoref{df:LpCP}.
Let $f\in L^1(G)$, and observe that
\[\|\lambda_p^{E_0}(f)\|=\|\lambda_p(f)\otimes\id_{E_0}\|=\|\lambda_p(f)\|.\]
Using Claim~2 at the second step, and the above identity at the third step, we get
\begin{align*}
\|\iota^G_{L,\lambda}(f)\|_{M(F^p_\lambda(G,A,\alpha))}&=\sup\{\|(\pi\rtimes \lambda_p^{E_0})(\iota_L^G(f))\| \colon (\pi,\lambda_p^{E_0})\in\mathrm{RegRep}_p(G,A,\alpha)\}\\
&= \sup\{\|\lambda_p^{E_0}(f)\|\colon (\pi,\lambda_p^{E_0})\in\mathrm{RegRep}_p(G,A,\alpha)\} \\
&=\|\lambda_p(f)\|\\
&= \|f\|_{F^p_\lambda(G)}.
\end{align*}
We conclude that $\iota^G_{L,\lambda}\colon F^p_\lambda(G)\to M_L(F_\lambda^p(G,A,\alpha))$ is isometric. This finishes the proof.
\end{proof}

When $p=1$, and regardless of whether $G$ is amenable or not, 
there is a canonical identification $F^1(G)=F^1_\lambda(G)$ (see Proposition~2.11 in~\cite{GarThi_GpsLp}), and hence the maps $\iota^G_L$, $\iota^G_R$ and $\iota^G$ from
\autoref{thm:MultAlgCP} are isometric (because they agree with the ones defined on $F^1_\lambda(G)$). However, when $p>1$, it is not in general
true that the maps defined on $F^p(G)$ are isometric, or even injective with closed range, as we explain in the following example.

\begin{eg}
Let $G$ be a discrete group, and let $\texttt{Lt}$ denote the action of $G$ on $c_0(G)$ by left translation. By Theorem~4.3 in~\cite{GarThi_GpsLp},
the canonical map $F^p(G,G,\texttt{Lt})\to F_\lambda^p(G,G,\texttt{Lt})$ is an isometric isomorphism, regardless of $G$. Denote by $\kappa_p\colon F^p(G)\to F^p_\lambda(G)$
the canonical contractive map with dense range, and recall (\autoref{thm:Amen}) that $\kappa_p$ is a (not necessarily isometric) isomorphism if and only if $G$ is amenable.
Naturality of the maps involved implies that the following diagram of unital homomorphisms commutes:
\begin{align*}
\xymatrix{
F^p(G)\ar[d]_{\kappa_p} \ar[rr]^-{\iota^{G}}&& M(F^p(G,G,\texttt{Lt}))\ar[d]^{\cong} \\
F^p_\lambda(G) \ar[rr]_-{\iota_\lambda^{G}} && M(F_\lambda^p(G,G,\texttt{Lt})),
}
\end{align*}
where the vertical map on the right is the canonical one. Now, if $\iota^{G}$ were injective and had closed range, that is, if it were an isomorphism onto its range, then
it would follow that $\kappa_p$ is an isomorphism, and hence that $G$ is amenable.

Since an identical reasoning applies to left or right multiplier algebras, we conclude that for any non-amenable group $G$, and for the action $\texttt{Lt}$, the maps
$\iota_L^G$, $\iota_R^G$ and $\iota^G$ are not isomorphisms onto their ranges. \end{eg}

Let $\alpha\colon G\to\Aut(A)$ be an action of a locally compact group $G$ on a Banach algebra $A$.
We denote by $A^{\mathrm{op}}$ the opposite Banach algebra, and write $\alpha^{\mathrm{op}}\colon G\to\Aut(A^{\mathrm{op}})$ for the action given by
$\alpha_g^{\mathrm{op}}=\alpha_g$ for all $g\in G$. When $A$ is abelian, then clearly $A=A^{\mathrm{op}}$ and $\alpha=\alpha^{\mathrm{op}}$.

We note that in $L^1(G,A^{\mathrm{op}},\alpha^{\mathrm{op}})$, convolution is performed using opposite
product on $A$, which we denote by $\cdot_{\mathrm{op}}$ to minimize confusion. Recall that $\Delta\colon G\to \R_+$ denotes the modular function
(see \autoref{subs:Not}).

\begin{prop}\label{prop:dualityCP}
Let $p\in (1,\infty)$, let $A$ be an $L^p$-operator algebra, let $G$ be a locally compact group, and let $\alpha\colon G\to \Aut(A)$ be a
continuous action. Then the map
\[\theta\colon L^1(G,A,\alpha)\to L^1(G,A^{\mathrm{op}},\alpha^{\mathrm{op}})\]
given by $\theta(f)(s)=\Delta(s^{-1})\alpha_{s}(f(s^{-1}))$, for all $f\in L^1(G,A,\alpha)$ and all $s\in G$, is an isometric anti-isomorphism, which moreover extends to
isometric anti-isomorphisms
\[F^p(G,A,\alpha)\cong F^{p'}(G,A^{\mathrm{op}},\alpha^{\mathrm{op}}) \ \mbox{ and } \  F^p_\lambda(G,X,\alpha)\cong F^{p'}_\lambda(G,A^{\mathrm{op}},\alpha^{\mathrm{op}}).\]
In particular, $F^p(G,A,\alpha)$ and $F^p_\lambda(G,A,\alpha)$ are anti-isometrically representable on $L^{p'}$-spaces.
\end{prop}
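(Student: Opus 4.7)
The plan follows the pattern established in \autoref{prop:dualityFullRed} and \autoref{thm:duality}, but incorporating the coefficient algebra and using Steps~2--4 of \autoref{thm:duality} as a template. First, I would verify the proposition at the level of $L^1$. The identity $\int_G g(s)\,d\mu(s) = \int_G \Delta(s^{-1}) g(s^{-1})\,d\mu(s)$ applied to $g(s)=\|f(s)\|$, together with the fact that each $\alpha_s$ is isometric, immediately gives $\|\theta(f)\|_1 = \|f\|_1$. The inverse of $\theta$ is given by the same formula (with $\alpha^{\mathrm{op}}$ in place of $\alpha$), so $\theta$ is an isometric linear bijection. To see that $\theta$ is anti-multiplicative, one computes $\theta(f \ast g)(s)=\Delta(s^{-1})\int_G \alpha_s(f(h))\alpha_{sh}(g(h^{-1}s^{-1}))\,d\mu(h)$ and applies the substitution $h \mapsto s^{-1}h$ (which preserves left Haar measure) to obtain $\Delta(s^{-1})\int_G \alpha_s(f(s^{-1}h)) \cdot \alpha_h(g(h^{-1}))\,d\mu(h)$; an analogous unwinding of $(\theta(g) \ast_{\mathrm{op}} \theta(f))(s)$ in $L^1(G,A^{\mathrm{op}},\alpha^{\mathrm{op}})$ gives the same integral, confirming the identity.

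Next, I would extend $\theta$ to an isometric anti-isomorphism of the full crossed products using $L^p$-$L^{p'}$ duality. Given a contractive covariant representation $(\pi,v)$ of $(G,A,\alpha)$ on an $L^p$-space $E$, the dual space $E'$ is an $L^{p'}$-space, and the pair $\tilde\pi\colon A^{\mathrm{op}}\to\B(E')$, $\tilde v\colon G\to\mathrm{Isom}(E')$ defined by $\tilde\pi(a)=\pi(a)'$ and $\tilde v_g=(v_{g^{-1}})'$ is a contractive covariant representation of $(G,A^{\mathrm{op}},\alpha^{\mathrm{op}})$; covariance follows by transposing $v_{g^{-1}}\pi(a)v_g=\pi(\alpha_{g^{-1}}(a))$. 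The crucial identity is
\[
(\tilde\pi\rtimes\tilde v)(\theta(f)) = ((\pi\rtimes v)(f))'.
\]
To verify it, expand the left-hand side as $\int_G \Delta(g^{-1})\pi(\alpha_g(f(g^{-1})))'v_{g^{-1}}'\eta\,d\mu(g)$, apply the modular identity to change variables $g\mapsto g^{-1}$, then use the transposed covariance $\pi(\alpha_{g^{-1}}(f(g)))'v_g'=v_g'\pi(f(g))'$ (with the cancellation $v_{g^{-1}}'v_g'=\id$) to recognize the result as $\int_G v_g'\pi(f(g))'\eta\,d\mu(g)=((\pi\rtimes v)(f))'\eta$. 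Because transposition is isometric, this gives $\|\theta(f)\|_{F^{p'}(G,A^{\mathrm{op}},\alpha^{\mathrm{op}})}\geq\|(\pi\rtimes v)(f)\|$; taking suprema over all $(\pi,v)$ and combining with the symmetric inequality applied to $\theta^{-1}$ yields the isometric anti-isomorphism of full crossed products.

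For the reduced crossed products, I need to check that the construction $(\pi,v)\mapsto(\tilde\pi,\tilde v)$ sends regular covariant representations of $(G,A,\alpha)$ to regular covariant representations of $(G,A^{\mathrm{op}},\alpha^{\mathrm{op}})$. Starting from a nondegenerate contractive $\pi_0\colon A\to\B(E_0)$ with associated regular representation $(\pi,\lambda_p^{E_0})$ on $L^p(G,E_0)$, one uses the canonical pairing $L^p(G,E_0)\times L^{p'}(G,E_0')\to\C$ given by $\langle\xi,\eta\rangle=\int_G\langle\xi(g),\eta(g)\rangle\,d\mu(g)$ to compute pointwise that $\pi(a)'\eta(g)=\pi_0'(\alpha_{g^{-1}}(a))\eta(g)$ and $(\lambda_p^{E_0})_{g^{-1}}'\eta(h)=\eta(g^{-1}h)$; comparing with \autoref{df:LpCP} shows that $(\tilde\pi,\tilde v)$ is precisely the regular covariant representation of $(G,A^{\mathrm{op}},\alpha^{\mathrm{op}})$ associated to $\pi_0'\colon A^{\mathrm{op}}\to\B(E_0')$. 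The same supremum argument over $\mathrm{RegRep}_p$ then gives $F^p_\lambda(G,A,\alpha)\cong F^{p'}_\lambda(G,A^{\mathrm{op}},\alpha^{\mathrm{op}})$. The main obstacle is the bookkeeping in the key identity of the second paragraph, where modular factors, the involutive change of variables, and the covariance relation must be applied in a precise order; once that computation is in place everything else is routine.
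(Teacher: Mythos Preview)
Your proposal is correct and follows essentially the same approach as the paper: verify that $\theta$ is an isometric anti-isomorphism on $L^1$, then pass to crossed products via the duality $(\pi,v)\mapsto(\pi',\,v'\circ\iota)$ between covariant $L^p$- and $L^{p'}$-representations, checking finally that this correspondence preserves regularity. The only notable difference is that you explicitly prove the intertwining identity $(\tilde\pi\rtimes\tilde v)(\theta(f)) = ((\pi\rtimes v)(f))'$, whereas the paper leaves this implicit and simply asserts the norm equality from the bijection of representation classes; your version is therefore slightly more detailed at precisely the point that justifies the passage from $L^1$ to the crossed-product norms.
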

\begin{proof}
Be begin by showing that the map $\theta$ from the statement is an isometric anti-isomorphism.

That $\theta$ is isometric follows from the definition of $\Delta$. For $f,g\in L^1(G,A,\alpha)$ and $s\in G$, we have
\[\theta(f\ast g)(s)=\Delta(s^{-1})\alpha_{s}(f\ast g)(s^{-1})=\Delta(s^{-1})\alpha_s\left(\int_G f(t)\alpha_t(g(t^{-1}s^{-1}))\ dt\right).\]
On the other hand, in the next computation we set $s^{-1}t=k$ at the fourth step to get
\begin{align*}
\left(\theta(g)\ast \theta(f)\right)(s)&= \int_G\theta(g)(t)\cdot_{\mathrm{op}}\alpha_t^{\mathrm{op}}\left(\theta(f)(t^{-1}s)\right) dt\\
&= \int_G \alpha_{t}(\Delta(st^{-1})\alpha_{t^{-1}s}(f(s^{-1}t))\Delta(t^{-1})\alpha_{t}(g(t^{-1}))\ dt\\
&= \Delta(s^{-1})\alpha_{s}\left(\int_G f(s^{-1}t)\alpha_{s^{-1}t}(g(t^{-1}))\ dt\right)\\
&= \Delta(s^{-1})\alpha_{s}\left(\int_G f(k)\alpha_{k}(g(k^{-1}s^{-1}))\ dk\right),
\end{align*}
which proves the claim.

Denote by $\iota\colon G\to G$ the inversion map, which is anti-multiplicative.
Let $\pi$ and $u$ be representations of $A$ and $G$, respectively, on an $L^p$-space $E$.
By abuse of notation, we denote by $\pi'\colon A\to \B(E')$ and $u'\colon G\to\B(E')$ the
homomorphisms given by $\pi'(a)=\pi(a)'$ and $u'_g=(u_g)'$ for all $a\in A$ and $g\in G$.
Then $\pi'$ and $u'\circ\iota$ are representations of
$A^{\mathrm{op}}$ and $G$, respectively, on the $L^{p'}$-space $E'$ (and conversely, by reflexivity). Moreover, for
$g\in G$ and $a\in A$, the identity $u_g\pi(a)u_{g^{-1}}=\pi(\alpha_g(a))$ is equivalent to $u'_{g^{-1}}\pi'(a)u'_g=\pi'(\alpha_g(a))$.
It follows that $(\pi,u)$ is a covariant representation for $(G,A,\alpha)$ if and
only if $(\pi',u'\circ \iota)$ is a covariant representation for
$(G,A^{\mathrm{op}},\alpha^{\mathrm{op}})$. This shows that the assignment $(\pi,u)\mapsto (\pi',u'\circ\iota)$ induces a natural bijection between the classes
$\mathrm{Rep}_p(G,A,\alpha)$ and $\mathrm{Rep}_{p'}(G,A^{\mathrm{op}},\alpha^{\mathrm{op}})$.
By the definition of the norm on the full crossed product, we conclude that
\[\|f\|_{F^p(G,A,\alpha)}=\|\theta(f)\|_{F^{p'}(G^,A^{\mathrm{op}},\alpha^{\mathrm{op}})}.\]
for all $f\in L^1(G,A,\alpha)$. This proves the statement for full crossed products.

The case of reduced crossed products follows similarly: the above bijection restricts to a bijection between
$\mathrm{RegRep}_p(G,A,\alpha)$ and $\mathrm{RegRep}_{p'}(G,A^{\mathrm{op}},\alpha^{\mathrm{op}})$, since for an
$L^p$-space $E_0$, the transpose of the representation $\lambda_p^{E_0}$ is $\lambda_{p'}^{E'_0}\circ \iota$.
\end{proof}

Note that if a Banach algebra $B$ is isometrically isomorphic to its opposite, then any Banach 
algebra completion of $B$ is also isomorphic to its opposite. We will use this (trivial) observation in the next corollary, 
with $B=L^1(G,A,\alpha)$ and the completions being the full and reduced crossed products.

%\begin{rem} Let $p\in [1,\I)$, let $A$ and $B$ be $L^p$-operator algebras, let $G$ and $H$ be locally compact groups, and let
%$\alpha\colon G\to\Aut(A)$ and $\beta\colon H\to \Aut(B)$ be continuous actions. Suppose that there are an isometric
%isomorphism $\varphi\colon A\to B$ and a group isomorphism $\rho\colon G\to H$ satisfying $\varphi\circ\alpha_g= \beta_{\rho(g)}\circ\varphi$
%for all $g\in G$. It is then easy to check that the map
%\[\varphi\rtimes\rho\colon L^1(G,A,\alpha)\to L^1(H,B,\beta)\]
%given by $(\varphi\rtimes\rho)(f)(h)=\varphi(f(\rho^{-1}(h)))$ for all $f\in L^1(G,A,\alpha)$ and all $h\in H$, extends to an isometric isomorphism
%$F^p(G,A,\alpha)\cong F^p(H,B,\beta)$. In particular, it follows that
%$\id_A\rtimes \iota$ induces an isometric isomorphism
%\[F^p(G,A,\alpha)\cong F^p(G^{\mathrm{op}},A,\alpha\circ\iota).\]
%If $\rho$ interwtines the left regular representations of $G$ and $H$, then similar conclusions apply to the reduced crossed products.
%\end{rem}

%Applying this with $A^{\mathrm{op}}$ and $\alpha^{\mathrm{op}}$ (observing that $\alpha^{\mathrm{op}}\circ \iota=\alpha^{-\mathrm{op}}$), and combining
%it with the conclusion of \autoref{prop:dualityCP}

\begin{cor}\label{cor:dualityAb} Adopt the notation of \autoref{prop:dualityCP}, and suppose that $A$ is abelian.
Then $L^1(G,A,\alpha)$ is canonically isometrically isomorphic to $L^1(G,A,\alpha)^{\mathrm{op}}$, and moreover there 
are natural isometric isomorphisms
\[F^p(G,A,\alpha)\cong F^{p'}(G,A,\alpha) \ \ \mbox{ and } \ \
 F_\lambda^p(G,A,\alpha)\cong F_\lambda^{p'}(G,A,\alpha).\]
\end{cor}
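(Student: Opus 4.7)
The plan is to derive everything from \autoref{prop:dualityCP} by exploiting the fact that, when $A$ is abelian, the opposite algebra $A^{\mathrm{op}}$ coincides with $A$ and $\alpha^{\mathrm{op}}$ coincides with $\alpha$, so that the anti-isomorphism $\theta$ of \autoref{prop:dualityCP} becomes an anti-isomorphism of $L^1(G,A,\alpha)$ with itself. Converting this anti-iso into an honest iso of completions amounts to composing it with the resulting iso between $F^{p'}(G,A,\alpha)$ and its opposite.

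First, I would record the following identifications. Since $A$ is abelian, inspection of the convolution formula for $L^1(G,A^{\mathrm{op}},\alpha^{\mathrm{op}})$ shows it agrees with that of $L^1(G,A,\alpha)$, so $L^1(G,A^{\mathrm{op}},\alpha^{\mathrm{op}}) = L^1(G,A,\alpha)$ as Banach algebras. Moreover, a contractive nondegenerate representation $\pi\colon A\to\B(E)$ and the covariance relation $v_g\pi(a)v_{g^{-1}}=\pi(\alpha_g(a))$ are literally the same data whether interpreted for $(G,A,\alpha)$ or for $(G,A^{\mathrm{op}},\alpha^{\mathrm{op}})$, and the same remark applies to regular covariant representations. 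Hence the classes $\mathrm{Rep}_p$ and $\mathrm{RegRep}_p$ coincide for the two systems, so that $F^{p'}(G,A^{\mathrm{op}},\alpha^{\mathrm{op}})=F^{p'}(G,A,\alpha)$ and $F^{p'}_\lambda(G,A^{\mathrm{op}},\alpha^{\mathrm{op}})=F^{p'}_\lambda(G,A,\alpha)$ as Banach algebras.

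Under these identifications, \autoref{prop:dualityCP} furnishes an isometric anti-isomorphism $\theta\colon L^1(G,A,\alpha)\to L^1(G,A,\alpha)$, which is precisely the datum of an isometric isomorphism $L^1(G,A,\alpha)\cong L^1(G,A,\alpha)^{\mathrm{op}}$. This proves the first assertion. By the trivial observation recorded just above the statement, this iso extends to isometric isomorphisms $F^{p'}(G,A,\alpha)\cong F^{p'}(G,A,\alpha)^{\mathrm{op}}$ and $F^{p'}_\lambda(G,A,\alpha)\cong F^{p'}_\lambda(G,A,\alpha)^{\mathrm{op}}$.

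Finally, \autoref{prop:dualityCP} also produces an isometric anti-isomorphism $\theta\colon F^p(G,A,\alpha)\to F^{p'}(G,A^{\mathrm{op}},\alpha^{\mathrm{op}})=F^{p'}(G,A,\alpha)$, equivalently an isometric isomorphism $F^p(G,A,\alpha)\cong F^{p'}(G,A,\alpha)^{\mathrm{op}}$. Composing this with the iso $F^{p'}(G,A,\alpha)^{\mathrm{op}}\cong F^{p'}(G,A,\alpha)$ from the previous paragraph yields the desired natural isometric isomorphism $F^p(G,A,\alpha)\cong F^{p'}(G,A,\alpha)$; the identical argument, carried out with regular covariant representations in place of arbitrary ones, handles $F^p_\lambda(G,A,\alpha)\cong F^{p'}_\lambda(G,A,\alpha)$. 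There is no genuine obstacle here: all the analytic content is already packaged in \autoref{prop:dualityCP}, and the only subtlety is the bookkeeping needed to convert the naturally occurring anti-iso into an iso.
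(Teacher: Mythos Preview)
Your proof is correct and follows essentially the same route as the paper's own argument: both use that $A^{\mathrm{op}}=A$ and $\alpha^{\mathrm{op}}=\alpha$ when $A$ is abelian, so that \autoref{prop:dualityCP} yields an isometric anti-isomorphism of $L^1(G,A,\alpha)$ with itself, and then invoke the observation recorded just before the corollary to pass to the completions before composing with the anti-isomorphism from \autoref{prop:dualityCP}. The only cosmetic difference is that the paper first obtains $F^p(G,A,\alpha)\cong F^p(G,A,\alpha)^{\mathrm{op}}$ and then composes with $F^p(G,A,\alpha)^{\mathrm{op}}\cong F^{p'}(G,A,\alpha)$, whereas you run the same composition with the roles of $p$ and $p'$ exchanged; your version is also slightly more explicit about why $F^{p'}(G,A^{\mathrm{op}},\alpha^{\mathrm{op}})=F^{p'}(G,A,\alpha)$.
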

\begin{proof} 
Observe that $A$ is also an $L^{p'}$-operator algebra, since it is abelian.
By the first part of \autoref{prop:dualityCP}, and since $A$ is abelian, the map $\theta$ is a natural isometric isomorphism
$L^1(G,A,\alpha)\cong L^1(G,A,\alpha)^{\mathrm{op}}$. Upon taking completions with respect to all covariant representations of $(G,A,\alpha)$,
we conclude that $F^p(G,A,\alpha)$ is isometrically isomorphic to its
opposite algebra. Composing this isomorphism with the isomorphism $F^p(G,A,\alpha)^{\mathrm{op}}\cong F^{p'}(G,A,\alpha)$ given by \autoref{prop:dualityCP},
we obtain the desired isometric isomorphism for full crossed products.

The case of reduced crossed products is identical: one completes with respect to regular covariant representations instead.
\end{proof}

The following is the main result of this section. When $X$ is the one point space, we recover \autoref{thm:FplambdaGLq}.
We point out that we do not know how to prove \autoref{thm:CrossProds} directly without first obtaining some form of
\autoref{thm:FplambdaGLq}, and that \autoref{cor:FpGFqG} is not strong enough to deduce \autoref{thm:CrossProds} from it. 

\begin{thm}\label{thm:CrossProds}
Let $X$ be a locally compact Hausdorff space, let $G$ be a nontrivial locally compact group, and let $\alpha\colon G\to \mathrm{Homeo}(X)$ be a
topological action. Given $p,q\in [1,\I)$ with $q>1$, the Banach algebras $F^p(G,X,\alpha)$ and $F^p_\lambda(G,X,\alpha)$
can be isometrically represented on an $L^q$-space if and only if one of the following holds:
\be
\item $\left|\frac 1p - \frac 12\right|=\left|\frac 1q - \frac 12\right|$; or
\item $G$ is abelian, $p=2$ and the action $\alpha$ is trivial.
\ee
\end{thm}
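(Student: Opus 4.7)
The overall plan is to reduce the ``only if'' direction to the group-algebra result \autoref{thm:FplambdaGLq} by exploiting the isometric embedding of $F^p_\lambda(G)$ into the left multiplier algebra of $F^p_\lambda(G,X,\alpha)$ provided by \autoref{thm:MultAlgCP}, and then to extend the given isometric representation of the crossed product to an isometric representation of this multiplier algebra. For the ``if'' direction, three cases suffice: the case $p=q$ is immediate from the definition of the crossed products as $L^p$-operator algebras; when $1/p+1/q=1$, \autoref{cor:dualityAb} applied to the abelian $L^p$-operator algebra $C_0(X)$ gives natural isometric isomorphisms $F^p(G,X,\alpha)\cong F^q(G,X,\alpha)$ and similarly for the reduced versions; when $p=2$, $G$ is abelian, and $\alpha$ is trivial, the crossed products become commutative $C^*$-algebras of the form $C^*(G)\otimes C_0(X)\cong C_0(\widehat{G}\times X)$, which by \autoref{thm:C*LpComm} are representable on every $L^q$-space.

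For the ``only if'' direction, suppose $\varphi\colon F^p_\lambda(G,X,\alpha)\to \B(E)$ is an isometric representation on an $L^q$-space. Since $C_0(X)$ has a contractive approximate identity, \autoref{thm:CPcai} gives one for the crossed product; using \autoref{thm:ProjEssSubsp} and Theorem~6 of~\cite{Tzafriri}, I would pass to the essential subspace $E_0$ (still an $L^q$-space) to obtain a non-degenerate isometric representation $\varphi_0$. By \autoref{thm:multipliers}, $\varphi_0$ extends to a contractive unital representation $\psi_L\colon M_L(F^p_\lambda(G,X,\alpha))\to \B(E_0)$, and a short calculation using $\psi_L(m)\varphi_0(a)=\varphi_0(m\cdot a)$ together with the isometry of $\varphi_0$ shows that $\psi_L$ is itself isometric on left multipliers. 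Composing $\psi_L$ with the isometric embedding $\iota^G_{L,\lambda}\colon F^p_\lambda(G)\to M_L(F^p_\lambda(G,X,\alpha))$ from \autoref{thm:MultAlgCP} yields an isometric representation of $F^p_\lambda(G)$ on the $L^q$-space $E_0$, so since $G$ is nontrivial, \autoref{thm:FplambdaGLq} gives either condition (1), or else $p=2$ and $G$ abelian.

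If the latter alternative holds and $q=2$, condition (1) is already satisfied; otherwise $q\neq 2$ and $F^2_\lambda(G,X,\alpha)=C^*_\lambda(G,X,\alpha)$ is a $C^*$-algebra isometrically representable on an $L^q$-space, so by \autoref{thm:C*LpComm} it is commutative. Commutativity passes to the multiplier algebra, and the covariance identity $u_g\cdot a=\alpha_g(a)\cdot u_g$ combined with $u_g\cdot a=a\cdot u_g$ forces $\alpha_g=\id$ for all $g$, establishing condition (2). The analogous argument for the full crossed product $F^p(G,X,\alpha)$ requires more care because \autoref{thm:MultAlgCP} only yields a contractive (not necessarily isometric) homomorphism $\iota^G_L\colon F^p(G)\to M_L(F^p(G,X,\alpha))$; I expect to handle this by noting that the composition $\psi_L\circ\iota^G_L$ produces a representation on an $L^q$-space of a Banach-algebra completion of $L^1(G)$ whose norm is sandwiched between $\|\cdot\|_{F^p_\lambda(G)}$ and $\|\cdot\|_{F^p(G)}$, and by adapting the cyclic-subgroup and spectral-configuration analysis underlying \autoref{thm:FplambdaGLq} to this intermediate setting. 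The main obstacle is precisely this full-crossed-product reduction, as the non-isometric nature of $\iota^G_L$ prevents a direct appeal to \autoref{thm:FplambdaGLq} and necessitates revisiting its internal mechanics.
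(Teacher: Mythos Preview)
Your treatment of the ``if'' direction and of the reduced crossed product is essentially the paper's. The observation that the extension $\psi_L$ of a non-degenerate \emph{isometric} representation to the left multiplier algebra is again isometric is correct and is used (implicitly) in the paper as well.

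The genuine difference lies in the full crossed product. You correctly identify that $\iota^G_L\colon F^p(G)\to M_L(F^p(G,X,\alpha))$ need not be isometric, and you propose to work with the intermediate completion of $L^1(G)$ obtained as the image, revisiting the spectral-configuration machinery. The paper avoids this entirely with a cleaner trick: rather than trying to represent $F^p(G)$, pick a nontrivial cyclic subgroup $H\leq G$. Since $H$ is amenable, $F^p(H)=F^p_\lambda(H)$ canonically, and there is a commutative diagram
\[
\xymatrix{
F^p(H)\ar[r]^-{\iota^{H,G}}\ar[dr]_-{\iota^{H,G}_\lambda} & M(F^p(G))\ar[r]^-{\widetilde{\iota^G}} & M(F^p(G,X,\alpha))\ar[d]^-{\widetilde{\kappa}}\\
& M(F^p_\lambda(G))\ar[r]_-{\widetilde{\iota^G_\lambda}} & M(F^p_\lambda(G,X,\alpha)).
}
\]
The bottom composite is isometric (by \autoref{thm:MultAlgCP} and \autoref{prop:Funct}), all maps are contractive, and $\widetilde{\kappa}$ is contractive; hence the top composite $F^p(H)\to M(F^p(G,X,\alpha))$ is forced to be isometric. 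Composing with your isometric extension $\widetilde{\psi}$ yields an isometric representation of $F^p(H)$ on the $L^q$-space, and \autoref{thm:FplambdaGLq} applies directly. So the ``obstacle'' you flag dissolves by restricting to an amenable subgroup \emph{before} passing to the multiplier algebra, rather than by re-engineering the internals of \autoref{thm:FplambdaGLq}. Your proposed route might be made to work, but it is substantially more laborious than necessary.
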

\begin{proof}
If $\left|\frac 1p - \frac 12\right|=\left|\frac 1q - \frac 12\right|$, then the conclusion follows from \autoref{cor:dualityAb}.
Suppose that $p=2$, that $G$ is abelian and that the action is trivial.
Denote by $\widehat{G}$ the dual group of $G$. Then one easily checks
that $F^2_\lambda(G,X,\alpha)$ and $F^2(G,X,\alpha)$
are both isometrically isomorphic to $C_0(X\times \widehat{G})$, so they are representable on an $L^q$-space, for any $q\in [1,\I)$. This proves the ``if" implication.

Let us show the converse. We treat the case of reduced crossed products first. Let $p,q\in [1,\I)$ with $q>1$, let $E$ be an $L^q$-space and
let $\varphi\colon F^p_\lambda(G,X,\alpha)\to \B(E)$ be an isomeric isomorphism.
Since $C_0(X)$ has a (two-sided) contractive approximate identity, so does $F_\lambda^p(G,X,\alpha)$ by \autoref{thm:CPcai}.
Hence, upon restricting to its essential subspace and using \autoref{thm:ProjEssSubsp},
we may assume that $\varphi$ is non-degenerate.
Let $\widetilde{\varphi}\colon M(F^p_\lambda(G,X,\alpha))\to \B(E)$ be the extension of $\varphi$ provided by \autoref{thm:multipliers}.
By \autoref{thm:MultAlgCP}, there exists a canonical isometric homomorphism
$\iota^G_\lambda\colon F^p_\lambda(G)\to M(F^p_\lambda(G,X,\alpha))$. We conclude that $\widetilde{\varphi}\circ\iota^G_\lambda$ is an isometric representation of
$F^p_\lambda(G)$ on an $L^q$-space.
By \autoref{thm:FplambdaGLq}, either $\left|\frac 1p - \frac 12\right|=\left|\frac 1q - \frac 12\right|$ or $p=2$ and $G$ is
abelian. Assuming the latter, $F^2(G,X,\alpha)$ is a \ca. Suppose, without loss of generality, that $q\neq 2$. Then $F^2(G,X,\alpha)$,
and thus its multiplier algebra $M(F^2(G,X,\alpha))$, must be abelian by \autoref{thm:C*LpComm}. For $g\in G$, let $u_g\in M(F^2(G,X,\alpha))$ denote the
canonical unitary implementing $\alpha_g$. For $a\in C_0(X)\subseteq M(F^2(G,X,\alpha))$, we have
\[\alpha_g(a)= u_gau_g^*=a.\]
It follows that $\alpha$ is trivial, as desired. This shows the statement for $F^p_\lambda(G,X,\alpha)$.

We prove the statement for full crossed products now. Let $p,q\in [1,\I)$ with $q>1$, let $F$ be an $L^q$-space and
let $\psi\colon F^p(G,X,\alpha)\to \B(F)$ be an isomeric isomorphism. As before, we may assume that $\psi$ is nondegenerate,
and we denote by $\widetilde{\psi}\colon M(F^p(G,X,\alpha))\to \B(F)$ its unital extension. Let $g\in G\setminus\{1\}$, and denote by $H\leq G$ the
(not necessarily closed) cyclic subgroup of $G$ generated by $g$. Then $H$ is amenable and there is a commutative diagram of contractive homomorphisms
(explanations follow below)
\begin{align*}
\xymatrix{
F^p(H)\ar[rr]^-{\iota^{H,G}} \ar[drr]_{\iota^{H,G}_\lambda}&& M(F^p(G))\ar[rr]^-{\widetilde{\iota^G}}&& M(F^p(G,X,\alpha))\ar[d]^{\widetilde{\kappa}}\\
&& M(F_\lambda^p(G))\ar[rr]_-{\widetilde{\iota_\lambda^G}}&& M(F_\lambda^p(G,X,\alpha)).
}
\end{align*}
In the diagram above, $\iota^{H,G}$ and $\iota^{H,G}_\lambda$ are the canonical isometric inclusions provided by \autoref{rem:InclMpG} and \autoref{prop:Funct}; $\widetilde{\iota^G}$
and $\widetilde{\iota_\lambda^G}$ are the canonical unital extensions of the maps constructed in \autoref{thm:MultAlgCP}; and $\widetilde{\kappa}$ is the unital extension of the canonical
map $\kappa$ from full to reduced crossed product. Since $\iota^G_\lambda$ is isometric by \autoref{thm:MultAlgCP}, so is $\widetilde{\iota^G_\lambda}$. By commutativity of the diagram,
$\widetilde{\kappa}\circ\widetilde{\iota^G}\circ\iota^{H,G}$ is an isometric representation of $F^p(H)$ on the $L^q$-space $F$. By \autoref{thm:FplambdaGLq}, we must have
either $\left|\frac 1p - \frac 12\right|=\left|\frac 1q - \frac 12\right|$, or $p=2$. Assume that $p=2$ and $q\neq 2$.
Then $F^p(G,X,\alpha)$ is a $C^*$-algebra, so it must be abelian by
\autoref{thm:C*LpComm}. Since $C^*_\lambda(G)$ embeds into the abelian $C^*$-algebra $M(F_\lambda^2(G,X,\alpha))$, the group $G$ itself must be abelian. Finally, the same argument
used before shows that $\alpha$ must be trivial. This finishes the proof.
\end{proof}

Finally, the following corollary asserts that the $L^p$-crossed products obtained from topological dynamical systems, for varying $p$,
are pairwise non-isometrically isomorphic, except for conjugate exponents.

\begin{cor}\label{cor:CrossProdsIsom}
Let $X$ be a locally compact Hausdorff space, let $G$ be a locally compact group, and let $\alpha\colon G\to \mathrm{Homeo}(X)$ be a
topological action. Given $p,q\in [1,\infty)$, the following conditions are equivalent:
\be
\item $F^p(G,X,\alpha)$ is isometrically isomorphic to $F^q(G,X,\alpha)$;
\item $F^p_\lambda(G,X,\alpha)$ is isometrically isomorphic to $F^q_\lambda(G,X,\alpha)$;
\item $\left|\frac 1p - \frac 12\right|=\left|\frac 1q - \frac 12\right|$.\ee
\end{cor}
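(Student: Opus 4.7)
The implications $(3)\Rightarrow(1)$ and $(3)\Rightarrow(2)$ follow directly from \autoref{cor:dualityAb}. The condition $\left|\tfrac{1}{p}-\tfrac{1}{2}\right|=\left|\tfrac{1}{q}-\tfrac{1}{2}\right|$ is equivalent to either $p=q$ (in which case there is nothing to prove) or $\tfrac{1}{p}+\tfrac{1}{q}=1$; in the latter case, since $C_0(X)$ is abelian, \autoref{cor:dualityAb} applied with $A=C_0(X)$ yields the desired isometric isomorphisms for both the full and the reduced crossed products.

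For $(1)\Rightarrow(3)$, assume $p\neq q$; since $p,q\in[1,\I)$, at least one of them exceeds $1$, and by the symmetry of condition $(1)$ in $p$ and $q$ we may assume $q>1$. Let $\varphi\colon F^p(G,X,\alpha)\to F^q(G,X,\alpha)$ be an isometric isomorphism. By construction $F^q(G,X,\alpha)$ is isometrically representable on an $L^q$-space, so composing $\varphi$ with such a representation exhibits $F^p(G,X,\alpha)$ as isometrically represented on an $L^q$-space. Applying \autoref{thm:CrossProds} then yields two possibilities: either $\left|\tfrac{1}{p}-\tfrac{1}{2}\right|=\left|\tfrac{1}{q}-\tfrac{1}{2}\right|$ (which is $(3)$), or $G$ is abelian, $p=2$, and $\alpha$ is trivial. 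In the latter case, $F^2(G,X,\alpha)\cong C_0(X\times\widehat{G})$ is a commutative $C^*$-algebra, so via $\varphi$ the algebra $F^q(G,X,\alpha)$ is also a commutative $C^*$-algebra and hence admits an isometric representation on an $L^2$-space. A second application of \autoref{thm:CrossProds} to $F^q(G,X,\alpha)$ represented on an $L^2$-space forces $q=2$, contradicting $p=2\neq q$. Thus $(3)$ must hold. The implication $(2)\Rightarrow(3)$ is established by the same argument, using the reduced crossed product $F^p_\lambda$ in place of $F^p$.

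The main obstacle is to rule out the second alternative provided by \autoref{thm:CrossProds}, namely the degenerate case $p=2$, $G$ abelian, and $\alpha$ trivial. This is not ruled out by the isomorphism hypothesis alone and requires a secondary bootstrap argument: one observes that in this case $F^p(G,X,\alpha)$ is already a commutative $C^*$-algebra, so that its isometric image $F^q(G,X,\alpha)$ is as well, and a second application of \autoref{thm:CrossProds} then forces $q=2=p$, contradicting $p\neq q$.
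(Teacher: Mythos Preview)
The paper gives no proof for this corollary; it is left as an immediate consequence of \autoref{thm:CrossProds}. Your argument is the natural one and is essentially correct: you invoke \autoref{cor:dualityAb} for $(3)\Rightarrow(1),(2)$, and for the converse directions you use that an isometric isomorphism onto $F^q$ (or $F^q_\lambda$) yields an isometric $L^q$-representation, then appeal to \autoref{thm:CrossProds}. The bootstrap you supply to eliminate the exceptional alternative ($p=2$, $G$ abelian, $\alpha$ trivial) is correct and is exactly what is needed, since \autoref{thm:CrossProds} alone does not immediately give $(3)$ in that case.

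One point you should make explicit: \autoref{thm:CrossProds} requires $G$ to be \emph{nontrivial}, and you invoke it without checking this. In fact the corollary as stated is false for the trivial group (then $F^p(G,X,\alpha)\cong C_0(X)$ for every $p$, so $(1)$ and $(2)$ hold for all $p,q$ while $(3)$ does not), so the hypothesis $G\neq\{e\}$ is implicitly carried over from \autoref{thm:CrossProds}. You should either state this assumption at the outset or dispose of the trivial case separately by noting that the equivalence fails there.
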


\providecommand{\bysame}{\leavevmode\hbox to3em{\hrulefill}\thinspace}
\providecommand{\MR}{\relax\ifhmode\unskip\space\fi MR }
% \MRhref is called by the amsart/book/proc definition of \MR.
\providecommand{\MRhref}[2]{%
  \href{http://www.ams.org/mathscinet-getitem?mr=#1}{#2}
}
\providecommand{\href}[2]{#2}

\end{document}